\theoremstyle{plain}
\newtheorem{prop}{Proposition}[section]
\newtheorem{thm}{Theorem}[section]
\newtheorem{lem}{Lemma}[section]
\newtheorem{rem}{Remark}[section]
\newtheorem{definition}{Definition}[section]
\DeclareMathOperator{\TV}{TV}
\newcommand{\Lip}{\mathscr{L}}
\newcommand{\R}{\mathbb{R}}
\DeclareMathOperator{\sign}{sign}
\DeclareMathOperator{\supp}{supp}
\title[Follow-the-leader scheme with space dependent flux]{Convergence of the follow-the-leader scheme for scalar conservation laws with space dependent flux}
\author{M. Di Francesco and G. Stivaletta}
\address{Department of Information Engineering, Computer Science, and Mathematics, Via Vetoio 1, Coppito, I-67100
L’Aquila, Italy.}
\email{marco.difrancesco@univaq.it, graziano.stivaletta@graduate.univaq.it}
\date{}
\begin{document}

\begin{abstract}
This paper deals with the derivation of entropy solutions to Cauchy problems for a class of scalar conservation laws with space-density depending fluxes from systems of deterministic particles of follow-the-leader type. We consider fluxes which are product of a function of the density $v(\rho)$ and a function of the space variable $\phi(x)$. We cover four distinct cases in terms of the sign of $\phi$, including cases in which the latter is not constant. The convergence result relies on a local maximum principle and on a uniform $BV$ estimate for the approximating density.
\end{abstract}

\maketitle

\section{Introduction}

\subsection{Continuum vs discrete modelling: the example of traffic flow modelling}

The approximation of nonlinear transport equations via follow-the-leader type schemes has attracted a lot of attention in the recent years. As a paradigm, consider Lighthill-Whitham-Richards' (LWR) equation for traffic flow \cite{LW,Richards}
\begin{equation}\label{eq:LWR}
  \rho_t + (\rho v(\rho))_x = 0,
\end{equation}
where $\rho$ is the density of vehicles and $\rho\mapsto v(\rho)$ is a decreasing function modelling the Eulerian velocity of vehicles. As it is well known, in this model instantaneous response to the distance to the preceding vehicle is assumed by neglecting drivers' reaction time, whereas other models \cite{ARZ} take the latter into account. However, \eqref{eq:LWR} is considered as a reliable model in several situations, for instance with low densities, see e.g. the recent book \cite{rosini} and the references therein. 

Both approaches in \cite{LW,Richards} and \cite{ARZ} treat the density of cars as a \emph{continuum}, that is as a medium that can be divided into particles of arbitrary small mass without changing the physical nature of the system. The continuum approach has many advantages, particularly in that it provides explicitly computable solutions combining nonlinear shock-waves and rarefaction waves in relevant examples and it allows extensions to control problems and modelling on networks in a relatively simple way. However, a \emph{discrete approach} is more suitable to model single drivers' behavior (car following, free driving) and is more convenient to simulate multi lane flow with lane changing. Neglecting the driver's time reaction as the distance to the preceding vehicle changes, the simplest discrete law for the dynamics of $n+1$ drivers is provided by the \emph{follow-the-leader} system
\begin{equation}\label{eq:FTLintro}
  \begin{cases}
    \displaystyle\dot{x}_i(t)=v\biggl(\frac{\ell}{x_{i+1}(t)-x_i(t)}\biggr), \quad \text{for}\ i \in \{ 0,\ldots,n-1 \},
    \\
    \dot{x}_n(t)=v_{\max}=v(0),
  \end{cases}
\end{equation}
where $x_0(t)<\ldots<x_n(t)$ denote the positions of the $n+1$ vehicles at time $t$, $v$ is a given non-negative, non-increasing function on $[0,+\infty)$ with finite value $v_{\max}$ at $0$ and $\ell$ is the (one-dimensional) mass of each vehicle. Typically, a maximum density $\rho_{\max}$ is prescribed in the model in order to avoid collisions, and the velocity $v$ satisfies $v(\rho_{\max}) = 0$. The vehicle $x_n$, called `leader', travels with maximum speed as no vehicles are ahead of it. The finite dimensional dynamical system \eqref{eq:FTLintro} is usually coupled with $n+1$ initial conditions $x_i(0)=\bar{x}_i$, $i=0,\ldots,n$.

\subsection{The follow-the-leader approximation of LWR equation}

The ODE system \eqref{eq:FTLintro} and the PDE \eqref{eq:LWR} are strictly related. To better understand such a statement, we observe that the quantity $\frac{\ell}{x_{i+1}(t)-x_i(t)}$ in \eqref{eq:FTLintro} has the physical dimension of a one-dimensional density. Therefore, loosely speaking, a reasonable continuum version of the ODE in \eqref{eq:FTLintro} is
\begin{equation}\label{eq:loosely}
  \dot{x}(t)= v(\rho(t)),
\end{equation}
where $x(t)$ denotes a Lagrangian trajectory of a particle with infinitesimal mass and $\rho(t)$ the density computed in the infinitesimal region around the same particle. Now, \eqref{eq:loosely} is nothing but a Lagrangian formulation of \eqref{eq:LWR}. Indeed, if we look at the latter as a continuity equation, the term $v(\rho(x,t))$ in it denotes the \emph{kinetic velocity} of a particle located at $x$ at time $t$.

Hence, the equation in \eqref{eq:FTLintro} for $i<n$ can be considered as a \emph{Lagrangian discrete counterpart} of the continuity equation \eqref{eq:LWR}. This motivates the mathematical interest for \eqref{eq:FTLintro} as a possible \emph{many-particle approximation} of \eqref{eq:LWR}, that is in the limit as $n\rightarrow +\infty$. While this fact has been largely known in the literature as a `formal limit', the result in \cite{DFR} proved it as a rigorous mathematical result. More precisely, the result in \cite{DFR} can be stated as follows: take an arbitrary continuum initial condition $\overline{\rho}\in L^1(\mathbb{R})\cap L^\infty(\mathbb{R})$ with compact support, and consider a suitable \emph{atomization} of $\overline{\rho}$, for instance a set of $n+1$ particles $\bar{x}_0,\ldots,\bar{x}_n\in \mathbb{R}$ with the property that $\int_{\bar{x}_i}^{\bar{x}_{i+1}}\overline{\rho}(x) dx = 1/n$ for all $i=0,\ldots,n-1$. Now, consider the (unique) solution to \eqref{eq:FTLintro} with initial condition $\bar{x}_i$, $i=0,\ldots,n$ and the discrete piecewise reconstruction of the particles' density
\begin{equation}\label{eq:rhoN_intro}
  \rho^n(x,t)=\sum_{i=0}^{n-1} \frac{1}{n(x_{i+1}(t)-x_i(t))}\mathbbm{1}_{[x_i(t),x_{i+1}(t))}(x).
\end{equation}
Then, $\rho^n$ converges in $L^1_{loc}(\mathbb{R}\times \mathbb{R}_{+})$ as $n\rightarrow +\infty$ to the unique entropy solution (in the sense of Kru{\v{z}}kov \cite{kruzkov}) $\rho$ of the scalar conservation law \eqref{eq:LWR} with initial datum $\overline{\rho}$. Such result was later extended to a larger class of initial conditions in \cite{DFRF}. An alternative proof was provided later on in \cite{HolRisebro2}, see also the numerical result in \cite{HolRisebro}. For the concept of entropy solution we refer e.g. to \cite{dafermos_book} at this stage (see also \cite{rosini} for the particular case of traffic flow modelling). We shall recall this concept at the beginning of Section 4 based on the definition provided in \cite{KarlsenRisebro}.

\subsection{A literature review on many particle limits for transport equations}

In a more general framework in which the dependence on $\rho$ in the velocity term $v$ in \eqref{eq:LWR} includes possible diffusion terms, or external force fields, or nonlocal interaction terms, several results are available in the literature. We provide here a partial list of results. A probabilistic approach based on exclusion processes was developed in several works, we mention here \cite{ferrari1,ferrari2,liggett}. When diffusion terms are included, we mention here the milestone results in \cite{vara1,vara2}. System \eqref{eq:FTLintro} is a typical example of \emph{deterministic} particle system, in that no stochastic effects are considered and the position of each particle is exactly computable for all times $t\geq 0$. A first attempt to detect diffusion effect via deterministic particles is due to \cite{russo}. The result in \cite{gosse} extends this approach to nonlinear diffusions. A relevant recent result also involving external potentials is contained in \cite{matthes}. Deterministic particle limits are also relevant in the literature of the modelling of swarming phenomena, see e.g. \cite{choi} and the references therein.

The result in \cite{DFR} provided, for the first time, an abstract validation of the continuum model \eqref{eq:LWR} as a `good' approximation of the discrete model \eqref{eq:FTLintro}. In particular, the emergence of typical patterns such as rarefaction waves and shocks - that are easily computable in \eqref{eq:LWR} - is established as a phenomenon that occurs also in the discrete setting, in a proper scaling regime in which the domain is large enough to include a very large number of vehicles and the total mass of the vehicles is normalized. We stress that said patterns are not detectable analytically in \eqref{eq:FTLintro} for finite $n$. Moreover, this set of results is relevant also from the numerical point of view, as it allows to follow the movement of each vehicle unlike standard approaches such as classical Godunov type methods. Finally, these results hold without prescribing the initial condition to be far from the vacuum state (a restriction which would contradict the fact that the inertia-free approach of \eqref{eq:LWR} is more suitable for low densities). We mention at this stage that the literature contains several results about the derivation of the second order ARZ model via deterministic follow-the-leader systems, see e.g. \cite{Aw,degond}.

Although traffic flow is a motivating example to justify \eqref{eq:LWR} as a many-particle limit for \eqref{eq:FTLintro}, the results in \cite{DFR,DFRF} hold under more general assumptions on the velocity map $v$: it suffices to assume that $v$ is monotone (decreasing or increasing) and the monotonicity of $v$ determines the proper upwind direction for the discrete density on the right-hand side of \eqref{eq:FTLintro} (an increasing velocity $v$ requires the use of the backward density $\frac{\ell}{x_{i}(t)-x_{i-1}(t)}$). Indeed, this deterministic particle approach to solving nonlinear continuity equations was later on extended to other models. In \cite{DFFRR2} the same approach was used to approximate solutions to \eqref{eq:LWR} on a bounded domain with Dirichlet type conditions. In \cite{DFFRR1} a suitable modification of \eqref{eq:FTLintro} was proven to converge in the many-particle limit to weak solutions to the Hughes model for pedestrian movements in one space dimension. A nonlocal version of \eqref{eq:LWR} was considered in \cite{DFFRad} as the many-particle limit of a suitable variant of \eqref{eq:FTLintro} considering nonlocal interactions with all particles, see also \cite{FRad} for the same model coupled with a nonlinear (degenerate) diffusion. We also mention here the results related to the ARZ model in \cite{DFFR_ARZ,bergoa}.

\subsection{Our result: the case with space-dependent flux}

The present paper contributes to the line of research sketched above by considering the case of a scalar conservation law with space-dependent flux
\begin{equation}\label{eq:main_intro}
  \rho_{t} + \bigl(\rho v(\rho) \phi(x)\bigr)_{x} =0,
\end{equation}
where $v$ is monotone and $\phi$ is a given external drift term depending on the position $x$. Besides being well motivated in the modelling context of traffic flow - for example in situations in which the speed of the vehicles is also affected by external factors (such as temporary road maintenance, or sudden turns or rises) - the equation \eqref{eq:main_intro} has a pretty wide range of potential applications in sedimentation processes \cite{buerger_et_al}, flow of glaciers \cite{glaciers}, formation of Bose-Einstein condensates \cite{toscani_kaniadakis}. For a more general description of the applications of nonlinear scalar conservation laws we refer to \cite{dafermos_book} and the references therein.

Similarly to the approach of \cite{DFR} and later results, we will assume throughout this paper that $v:[0,+\infty)\rightarrow [0,+\infty)$ is \emph{monotone non-increasing} and non-negative, with $v(0)<+\infty$. A symmetric result could be stated in case of a non-decreasing $v$, we shall omit the details. As for the potential $\phi$, we consider four cases:
\begin{itemize}
\item[(P1)] $\phi(x)\geq 0$ for all $x\in \mathbb{R}$ (forward movement);
\item[(P2)] $\phi(x)\leq 0$ for all $x\in \mathbb{R}$ (backward movement);
\item[(P3)] $x\phi(x)\geq 0$ for all $x\in \mathbb{R}$ (repulsive movement);
\item[(P4)] $x \phi(x)\leq 0$ for all $x\in \mathbb{R}$ (attractive movement).
\end{itemize}

We refer to section \ref{sec:statement} for the precise statement of all assumptions on $v$ and $\phi$. For each of the above four cases we shall provide an ad-hoc many-particle approximation result in the spirit of \eqref{eq:FTLintro}. For example, case (P1) requires the use of the \emph{forward} follow-the-leader scheme
\begin{equation}\label{eq:FTL_intro_paper}
  \begin{cases}
    \displaystyle\dot{x}_i(t)=v\left(\frac{\ell}{x_{i+1}(t)-x_i(t)}\right) \phi(x_i(t)), & \mbox{for $i=0,\ldots,n-1,$}  \\
    \dot{x}_n(t)=v(0)\phi(x_n(t)). &
  \end{cases}
\end{equation}

The distinction between case (P1) and case (P2) is relevant in that it implies a change in the upwind direction of the scheme. More in detail, if $\phi\leq 0$ then all particles are subject to a drift directed towards the negative direction. Hence, it is reasonable to assume that each particle adjusts its speed by considering the distance to its \emph{left} nearest neighbor, and the leftmost particle will be the leader travelling with $v=v(0)$. In case (P3), the drift direction changes at the origin $x=0$, with a positive direction on $x\geq 0$ (non-negative $\phi$) and a negative one on $x\leq 0$ (non-positive $\phi$). We shall refer to this case as \emph{repulsive} movement, since it implies a drift of all particles away from the origin. Two leaders (leftmost and rightmost particle) will travel with speed $v=v(0)$. Symmetrically, in case (P4) particles move towards the positive (negative respectively) direction on $x\leq 0$ (on $x\geq 0$ respectively). This implies that no actual `leader' exists in the sense of the previous cases, and particles adapt their speed with respect to the relative position with their right (left respectively) nearest neighbor on $x\leq 0$ (on $x\geq 0$ respectively). This situation implies an \emph{attractive} movement towards the origin, a phenomenon that could potentially imply collision between the two particles nearest to the origin in a finite time. To see this, consider the example $v(\rho)=(1+\rho)^{-1}$, $\phi(x)=-|x|^\alpha$ with $\alpha\in (0,1)$. Setting two particles at initial positions $-x_0, x_0$ with $x_0>0$, one can easily show that the two particles $-x(t)$ and $x(t)$ obeying
\[\dot{x}(t)=\phi(x(t))v\left(\frac{\ell}{2x(t)}\right)\,,\qquad x(0)=x_0,\]
reach the origin in a finite time. Other significant examples originate in the continuum setting in the study of Bose-Einstein condensates, see \cite{CDFT_ANS}, with $\phi(x)=-x$ and $v(\rho)=\rho^2$, in which the finite time blow-up in $L^\infty$ of the density is proven. In order to bypass this problem, we shall require an additional assumption for case (P4), namely that the velocity map $v(\rho)$ vanishes at some prescribed maximal density value $R_{\max}$ and is equal to zero on $[R_{\max},+\infty)$. Such assumption is reasonable in contexts such as traffic flow in a single lane, in which overtaking of vehicles is not allowed. 

Our choice in cases (P3) and (P4) to consider just one point at which the potential $\phi$ changes its sign is only motivated by the sake of simplicity. By suitably combining the results obtained in the previous cases, one can easily extend our result to a potential $\phi$ satisfying the following assumption:
\begin{itemize}
  \item $\phi$ has finitely many zeroes $\lambda_1<\ldots,\lambda_k$, it has constant sign on each interval $(\lambda_j,\lambda_{j+1})$ for all $j=1,\ldots,k-1$ and it changes its sign near each $\lambda_j$.
\end{itemize}
More precisely, we can split the real line into separate regions with attractive potentials and apply the strategy we propose for case (P4). Clearly, the assumption of a velocity function $v$ vanishing at a fixed maximal density should hold in order to avoid blow-up. We omit the details.

In all the aforementioned four cases we are able to prove a convergence result in the spirit of \cite{DFR}: given an initial condition $\bar{\rho}\in L^\infty(\mathbb{R})\cap BV(\mathbb{R})$ non-negative and with compact support, we atomize $\bar{\rho}$ by a set of $n+1$ particles $\bar{x}_0,\ldots,\bar{x}_n$, we consider the piecewise constant density $\rho^n$ as in \eqref{eq:rhoN_intro} with $x_0(t),\ldots,x_n(t)$ solution to a suitable follow-the-leader scheme (\eqref{eq:FTL_intro_paper} in case (P1) as an example) with initial datum $\bar{x}_0,\ldots,\bar{x}_n$, and prove that $\rho^n$ converges locally in $L^1_{x,t}$ towards the unique entropy solution to \eqref{eq:main_intro} with $\bar{\rho}$ as initial condition. Such result requires as crucial steps:
\begin{itemize}
  \item A local maximum principle showing that $\|\rho^n\|_{L^\infty(\mathbb{R})}$ is uniformly bounded with respect to $n$ on arbitrary time intervals $[0,T]$;
  \item $BV$ compactness estimates;
  \item Consistency with the definition of entropy solutions (in the Kru{\v{z}}kov's sense \cite{kruzkov}) in the $n\rightarrow +\infty$ limit.
\end{itemize}
Such a strategy requires an $L^1\cap L^\infty$ setting. This is why we cannot consider case (P4) in presence of blow-up or concentration phenomena. This issue will be tackled in a future paper.

The paper is structured as follows. In section \ref{sec:statement} we define our four approximation schemes and prove their main properties, including the maximum principle for all of them. We highlight that cases (P1)-(P2)-(P3) feature a maximum principle in terms of the initial $L^\infty$ norm, whereas in case (P4) the uniform bound for $\rho^n$ is provided in terms of the maximal density $R_{\max}$. In section \ref{sec:compactness} we prove the needed uniform $BV$ estimate, as well as an equicontinuity property with respect to the Wasserstein distance that provide local $L^1$ compactness in space and time. Finally, in section \ref{sec:consistency} we state and prove our main result in Theorem \ref{thm:main}, that collects the convergence of the scheme in all four cases.

\section{Statement of the problem and maximum principles}\label{sec:statement}
Let us consider the following Cauchy problem for a one-dimensional conservation law
\begin{equation}
\label{sis2}
    \begin{cases}
     \rho_{t} + \bigl(\rho v(\rho) \phi(x)\bigr)_{x} =0,
    \qquad  & x\in \mathbb{R},\, t>0,
    \\ \rho(x,0)=\overline{\rho}(x),
    \qquad  & x\in \mathbb{R},
   \end{cases}
\end{equation}
where we assume that the function $v$ and the initial datum $\overline{\rho}$ satisfy respectively
\begin{itemize}
\item[(V)] $v\in\ Lip(\mathbb{R_{+}})$ is a non-negative and non-increasing Lipschitz function with $v(0):= v_{\max}< +\infty$;
\item[(I)] $\overline{\rho}\in \ L^{\infty}(\mathbb{R})\cap BV(\mathbb{R})$ is a non-negative, compactly supported function.
\end{itemize}
Concerning the potential $\phi$, we shall deal with four different cases
\begin{itemize}
\item[(P1)] $\phi(x)\geq 0$ for all $x\in \mathbb{R}$ (forward movement);
\item[(P2)] $\phi(x)\leq 0$ for all $x\in \mathbb{R}$ (backward movement);
\item[(P3)] $x\phi(x)\geq 0$ for all $x\in \mathbb{R}$ (repulsive movement);
\item[(P4)] $x \phi(x)\leq 0$ for all $x\in \mathbb{R}$ (attractive movement).
\end{itemize}
In all these cases, we assume the basic condition
\begin{itemize}
\item[(P)] $\phi\in W^{2,\infty}(\mathbb{R})$
\end{itemize}
and in the last one we add the following condition on the function $v$
\begin{itemize}
\item[(V$^*$)] There exists $R_{\max}> 0$ such that $\overline{R}:=||\overline{\rho}||_{L^{\infty}(\mathbb{R})}\leq R_{\max}$, $v(\rho)> 0$ for $\rho< R_{\max}$ and $v(\rho)\equiv 0$ for $\rho\geq R_{\max}$.
\end{itemize}
For the sake of simplicity, we suppose that the initial mass is normalised, that is
\[||\overline{\rho}||_{L^{1}(\mathbb{R})}=1.\] Moreover, let us denote with
\[[\overline{x}_{\min},\overline{x}_{\max}] =\mathrm{Conv}( \mathrm{supp}(\overline{\rho}))\]
the convex hull of the support of $\overline{\rho}$.

Our next goal is to provide an initial condition for the follow-the-leader systems. To perform this task, we split the interval $[\overline{x}_{\min},\overline{x}_{\max}]$ into $n$ sub-intervals having equal mass $\ell_{n}:=1/n$. So, for a fixed $n\in \mathbb{N}$ sufficiently large, we set $\overline{x}_{0}^{n}:=\overline{x}_{\min}$, $\overline{x}_{n}^{n}:=\overline{x}_{\max}$ and we define recursively
\begin{equation*}
\overline{x}_{i}^{n}:= \sup \biggl\{ x\in \mathbb{R}: \int_{\overline{x}_{i-1}^{n}}^{x}\overline{\rho}(x)dx < \ell_{n}\biggr\} \quad \text{for}\ i\in \{1, \dots, n-1\}.
\end{equation*}
From the previous definition we immediately have that $\overline{x}_{0}^{n}<\overline{x}_{1}^{n}<\dots <\overline{x}_{n}^{n}$ and
\begin{equation}
\label{5}
\int_{\overline{x}_{i-1}^{n}}^{\overline{x}_{i}^{n}}\overline{\rho}(x)dx= \ell_{n} \quad \text{for}\ i\in \{1,\dots,n-1\}.
\end{equation}

Next we introduce the follow-the-leader systems describing the evolution of the $n+1$ particles with initial positions $\overline{x}_{i}^{n}$, $i=0,\ldots,n$. The definition of the particle system depends on the cases (P1)-(P4) introduced above, hence we should introduce four different approximation schemes, nevertheless, as we will see in a moment, the latter two are strictly related to the former two. Cases (P1) and (P2) are the simplest ones, as the constant sign of $\phi$ does not affect the monotonicity of the velocity field $v(\rho)\phi(x)$. Consistently with the homogeneous case \cite{DFR}, when $\phi$ is non-negative we have that the velocity field decreases with respect to the density $\rho$. Therefore, in case (P1) the follow-the-leader scheme should consider a \emph{forward} finite-difference approximation of the density. Symmetrically, (P2) implies a \emph{backward} approximation. As a consequence, with the notation
\begin{equation*}
R_{i}^{n}(t):=\frac{\ell_{n}}{x_{i+1}^{n}(t)-x_{i}^{n}(t)},\qquad t\geq 0,\quad i\in \{0,\dots,n-1\},
\end{equation*}
in case (P1) we use the ODE system
\begin{equation}
\label{1}
\begin{cases}
\dot{x}_{i}^{n}(t)=v(R_{i}^{n}(t))\phi(x_{i}^{n}(t)), & \quad \text{for} \ i\in \{0,\dots,n-1\},
\\
\dot{x}_{n}^{n}(t)=v_{\max}\phi(x_{n}^{n}(t)),
\\
x_{i}^{n}(0)=\overline{x}_{i}^{n}, & \quad \text{for} \ i\in \{0,\dots,n\}
\end{cases}
\end{equation}
and in case (P2) we use
\begin{equation}
\label{2}
\begin{cases}
\dot{x}_{0}^{n}(t)=v_{\max}\phi(x_{0}^{n}(t)),
\\
\dot{x}_{i}^{n}(t)=v(R_{i-1}^{n}(t))\phi(x_{i}^{n}(t)), & \quad \text{for} \ i\in \{1,\dots,n\},
\\
x_{i}^{n}(0)=\overline{x}_{i}^{n}, & \quad \text{for} \ i\in \{0,\dots,n\}.
\end{cases}
\end{equation}
For cases (P3) and (P4) we consider a sort of combination of the previous two cases. With the notation
\begin{equation*}
k_{n}:=\max\bigl\{i\in \{0,\dots,n\}: \overline{x}_{i}^{n}\leq 0\bigr\},
\end{equation*}
the ODE system in case (P3) is
\begin{equation}
\label{3}
\begin{cases}
\dot{x}_{0}^{n}(t)=v_{\max}\phi(x_{0}^{n}(t)),
\\
\dot{x}_{i}^{n}(t)=v(R_{i-1}^{n}(t))\phi(x_{i}^{n}(t)), & \quad \text{for} \ i\in \{1,\dots,k_{n}\},
\\
\dot{x}_{i}^{n}(t)=v(R_{i}^{n}(t))\phi(x_{i}^{n}(t)), & \quad \text{for} \ i\in \{k_{n}+1,\dots,n-1\},
\\
\dot{x}_{n}^{n}(t)=v_{\max}\phi(x_{n}^{n}(t)),
\\
x_{i}^{n}(0)=\overline{x}_{i}^{n}, & \quad \text{for} \ i\in \{0,\dots,n\},
\end{cases}
\end{equation}
whereas in case (P4) we use
\begin{equation}
\label{4}
\begin{cases}
\dot{x}_{i}^{n}(t)=v(R_{i}^{n}(t))\phi(x_{i}^{n}(t)), & \quad \text{for} \ i\in \{0,\dots,k_{n}\},
\\
\dot{x}_{i}^{n}(t)=v(R_{i-1}^{n}(t))\phi(x_{i}^{n}(t)), & \quad \text{for} \ i\in \{k_{n}+1,\dots,n\},
\\
x_{i}^{n}(0)=\overline{x}_{i}^{n}, & \quad \text{for} \ i\in \{0,\dots,n\}.
\end{cases}
\end{equation}
For the sequel, we denote with $\Lip_{\psi}$ the Lipschitz constant of any $\psi\in Lip$ and we define the quantities
\begin{equation*}
\label{150}
L:= v_{\max}||\phi||_{L^{\infty}(\mathbb{R})}\quad \text{and} \quad L':= v_{\max}||\phi'||_{L^{\infty}(\mathbb{R})},
\end{equation*}
which are, thanks to assumption (P), two positive constant and, moreover, we drop the $n$ and $t$ dependence for simplicity, whenever there is no ambiguity.

We remark that the Lipschitz conditions on $v$ and $\phi$ ensure the local existence and uniqueness of solution to \eqref{1}-\eqref{4}. In particular, as long as the solution exists particles maintain the same order (otherwise one of the ratios $R_i$ blows up and the right-hand-side in \eqref{1}-\eqref{4} becomes meaningless). In order to safeguard global existence, we need to prove three properties:
\begin{itemize}
\item[(a)] Particles have a finite position and velocity on bounded time intervals,
\item [(b)] Particles always move in the same direction in \eqref{3} and \eqref{4},
\item [(c)] Particles never collide, consequently they always maintain the same order.
\end{itemize}

\begin{lem}[Finite position and velocity on bounded time intervals]
Assume (V), (I) and (P) are satisfied and, moreover, assume (V$^*$) is satisfied in case (P4). Then, as long as the solution to one of \eqref{1}-\eqref{4} exists, we have that
\begin{equation}
\label{190}
|x_{i}^{n}(t)|<+\infty \quad \text{and}  \quad |\dot{x}_{i}^{n}(t)|\leq L< +\infty \quad \text{for all} \ i\in \{0,\dots,n\}.
\end{equation}
\end{lem}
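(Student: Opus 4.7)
The plan is simple: establish the velocity bound directly from the structure of the right-hand sides of \eqref{1}--\eqref{4}, then integrate in time to deduce the position bound. Two ingredients suffice: by assumption (V), $v$ is non-negative and non-increasing with $v(0)=v_{\max}$, hence $0 \leq v(\rho) \leq v_{\max}$ for every $\rho \geq 0$; by assumption (P), $\phi\in W^{2,\infty}(\mathbb{R})$ and in particular $\|\phi\|_{L^{\infty}(\mathbb{R})} < +\infty$.

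As long as the solution exists, particles maintain their strict order (otherwise two consecutive positions would coincide and the corresponding $R_i^n$ would cease to be well defined), so each $R_i^n(t)$ is a well-defined non-negative real number. In each of the four ODE systems \eqref{1}--\eqref{4}, the right-hand side governing $\dot{x}_i^n$ has the form $v(\cdot)\phi(x_i^n)$, with the argument of $v$ being either $R_i^n$, $R_{i-1}^n$, or $0$ (for leader particles where the prescribed speed is $v_{\max}=v(0)$). Taking absolute values and combining the two bounds above yields
\begin{equation*}
|\dot{x}_i^n(t)| \leq v_{\max}\|\phi\|_{L^{\infty}(\mathbb{R})} = L
\end{equation*}
uniformly in $i\in\{0,\dots,n\}$ and in the chosen scheme.

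The position bound then follows by writing $x_i^n(t) = \overline{x}_i^n + \int_0^t \dot{x}_i^n(s)\, ds$ and exploiting that each initial position $\overline{x}_i^n$ lies in the compact interval $[\overline{x}_{\min},\overline{x}_{\max}]$; this gives $|x_i^n(t)| \leq \max\{|\overline{x}_{\min}|,|\overline{x}_{\max}|\} + Lt < +\infty$ on any bounded time subinterval of the existence interval. The statement holds uniformly for all four schemes, with an identical argument, so there is no need to treat (P1)--(P4) separately. The only remark worth making is that assumption (V$^{*}$) is not actually used in the present estimate, since the upper bound $v(\rho) \leq v_{\max}$ already holds for every $\rho \geq 0$ under (V) alone; (V$^{*}$) will play its real role only later, in preventing collisions and density blow-up in case (P4). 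I do not foresee any genuine obstacle in this lemma: it is essentially a one-line bound once the non-negativity of the ratios $R_i^n$ along the life span of the solution has been remarked.
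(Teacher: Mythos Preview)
Your proof is correct and follows essentially the same approach as the paper: bound $|\dot{x}_i^n|$ by $v_{\max}\|\phi\|_{L^\infty}=L$ using (V) and (P), then integrate to control the positions. Your version is more explicit (and your remark that (V$^*$) is not actually used here is accurate), but the argument is the same.
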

\begin{proof}
Due to assumption (P) and (V) we immediately get $|\dot{x}_{i}(t)|\leq L$. Furthermore, integrating the ODE defining the evolution of each $x_{i}(t)$, it follows that
\begin{equation*}
|x_{i}(t)|=\biggl|\overline{x}_{i}+\int_{0}^{t}\dot{x}_{i}(s)ds\biggr|\leq |\overline{x}_{i}|+\int_{0}^{t}|\dot{x}_{i}(s)|ds\leq |\overline{x}_{i}|+Lt<+\infty.
\end{equation*}
\end{proof}

\begin{lem}[Upper bound for the distance of two consecutive particles]
\label{lem7} Assume (V), (I) and (P) are satisfied and, moreover, assume (V$^*$) is satisfied in case (P4). Then, as long as the solution to one of \eqref{1}-\eqref{4} exists, there exists a positive constant $c$, depending only on $\phi$ and $v$, such that
\begin{equation}
\label{130}
x_{i+1}^{n}(t)-x_{i}^{n}(t)\leq x_{n}^{n}(t)-x_{0}^{n}(t)\leq \overline{x}_{\max}-\overline{x}_{\min}+ct \quad \text{for all} \  i\in \{0,\dots,n-1\}.
\end{equation}
Moreover, $c=0$ in case (P4).
\end{lem}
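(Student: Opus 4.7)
The first inequality in \eqref{130} is immediate: on the time interval of existence of the ODE solution the particles retain their initial ordering $x_0^n(t)<x_1^n(t)<\cdots<x_n^n(t)$, since any collision would blow up the corresponding $R_i^n$ and make the right-hand side of \eqref{1}--\eqref{4} ill-defined. It thus suffices to estimate the growth of $D_n(t):=x_n^n(t)-x_0^n(t)$.

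The plan in the cases (P1)--(P3) is simply to differentiate $D_n$ and to exploit the sign of $\phi$ together with $0\le v\le v_{\max}$. In (P1), $\phi\ge 0$ makes the term $v(R_0^n)\phi(x_0^n)$ non-negative, so $\dot D_n=v_{\max}\phi(x_n^n)-v(R_0^n)\phi(x_0^n)\le v_{\max}\phi(x_n^n)\le L$; case (P2) is symmetric using $\phi\le 0$; and in (P3) both $x_0^n$ and $x_n^n$ obey a leader-type equation, so $\dot D_n=v_{\max}(\phi(x_n^n)-\phi(x_0^n))\le 2L$. Hence $c:=2L$ is admissible in all three cases, and depends only on $v_{\max}$ and $\|\phi\|_{L^\infty}$. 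Integrating on $[0,t]$ with $D_n(0)=\overline{x}_{\max}-\overline{x}_{\min}$ yields \eqref{130}.

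The sharper claim $c=0$ in case (P4) requires more care. The continuity of $\phi$ combined with $x\phi(x)\le 0$ forces $\phi(0)=0$; moreover the extremal equations in \eqref{4} read $\dot x_0^n=v(R_0^n)\phi(x_0^n)$ and $\dot x_n^n=v(R_{n-1}^n)\phi(x_n^n)$. The key step is the sign invariance $x_0^n(t)\le 0$ and $x_n^n(t)\ge 0$, which is established by a differential inequality on the positive (resp.\ negative) part: wherever $x_0^n>0$ one has $\phi(x_0^n)\le 0$ and $v\ge 0$, so $\dot x_0^n\le 0$, which gives $\tfrac{d}{dt}(x_0^n)^+\le 0$; since $(\overline x_0^n)^+=0$ by the implicit hypothesis $\overline{x}_{\min}\le 0$ built into the definition of $k_n$, the positive part vanishes identically, so $x_0^n(t)\le 0$ throughout, and analogously $x_n^n(t)\ge 0$. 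With these signs in force, $\phi(x_0^n)\ge 0$ and $\phi(x_n^n)\le 0$ imply $\dot x_0^n\ge 0$ and $\dot x_n^n\le 0$, so $\dot D_n\le 0$ and $c=0$ is admissible.

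The main obstacle is precisely this sign-preservation step in case (P4): the crude velocity bound $|\dot D_n|\le 2L$ would only yield the weaker $c=2L>0$, which would not deliver the sharp monotonicity of $D_n$ required in the attractive regime. The positive-part computation above converts the stationary property $\phi(0)=0$ into a rigorous invariance of the half-line $\{x\le 0\}$ for $x_0^n$, bypassing any delicate appeal to ODE uniqueness at the equilibrium even though the coefficient $v(R_0^n(t))$ is only known through the coupled system.
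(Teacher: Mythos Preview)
Your proof is correct and follows essentially the same approach as the paper: bound $\dot D_n$ using $|\dot x_i|\le L$ (the paper just uses the crude $|\dot D_n|\le 2L$ from \eqref{190} without exploiting the sign of $\phi$ as you do in (P1)--(P2)) and integrate. For (P4) you actually give more than the paper does here---the paper simply asserts that $\dot x_n\le 0$ and $\dot x_0\ge 0$, tacitly relying on the sign-preservation result proven immediately afterwards, whereas your positive-part argument makes this step self-contained.
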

\begin{proof}
Integrating the ODE defining the evolution of the difference $x_{n}(t)-x_{0}(t)$, since all particles maintain the same order as long as the solution exists, we immediately get
\begin{equation*}
x_{i+1}(t)-x_i(t)\leq x_{n}(t)-x_{0}(t)\!=\! \biggl|\overline{x}_{n}-\overline{x}_{0}+\!\int_{0}^{t}\!\!\bigl(\dot{x}_{n}(s)-\dot{x}_{0}(s)\bigr)ds\biggr|\!\leq \overline{x}_{\max}-\overline{x}_{\min}+2Lt.
\end{equation*}
In case (P4) the estimate with $c=0$ follows from the fact that $x_n$ has a negative velocity and $x_0$ has a positive one.
\end{proof}

Now we prove the following technical result regarding cases (P3)-(P4).

\begin{prop}[Preservation of the particles' sign in cases (P3)-(P4)]
Assume (V), (I) and (P) are satisfied and, moreover, assume (V$^*$) is satisfied in case (P4). Then, as long as the solution to \eqref{3} and \eqref{4} exists, we have that
\begin{equation}
\label{8}
x_{i}(t)\leq 0 \ \ \text{for} \ i\in \{0,\dots,k_{n}-1\} \quad \text{and} \quad x_{i}(t)\geq 0 \ \ \text{for} \ i\in \{k_{n}+1,\dots,n\}.
\end{equation}
For $i=k_{n}$, we have that
\begin{itemize}
\item[(a)] If $\overline{x}_{k_n}<0$, then $x_{k_n}(t)\leq 0$;
\item[(b)] If $\overline{x}_{k_n}=0$, then $x_{k_n}(t)\equiv 0$.
\end{itemize}
\end{prop}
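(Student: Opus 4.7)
The plan is to exploit the fact that assumptions (P3) and (P4) both force $\phi(0)=0$: in (P3) because $\phi(x)\geq 0$ on $[0,+\infty)$ and $\phi(x)\leq 0$ on $(-\infty,0]$ force $\phi(0)=0$ from both sides, and symmetrically in (P4). Combined with (P), which makes $\phi$ Lipschitz, this yields the pointwise bound $|\phi(y)|\leq \Lip_\phi\,|y|$ and turns $x\equiv 0$ into an equilibrium of every ODE of the form $\dot x=v(\cdot)\phi(x)$ appearing in \eqref{3}--\eqref{4}.

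First I would dispose of case (b) for the central particle. Since $|\phi(y)|\leq \Lip_\phi\,|y|$ and $v\leq v_{\max}$, the ODE for $x_{k_n}$ satisfies $|\dot x_{k_n}(t)|\leq L'\,|x_{k_n}(t)|$; integrating in time and applying Gronwall's lemma yields $|x_{k_n}(t)|\leq |\overline{x}_{k_n}|\,e^{L' t}$, so the assumption $\overline{x}_{k_n}=0$ forces $x_{k_n}(t)\equiv 0$, as required.

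For the indices $i\in\{0,\dots,k_n-1\}$ (and for $i=k_n$ in case (a), where $\overline{x}_{k_n}<0$) I would split according to the sign of the vector field. In case (P3), $\phi\leq 0$ on $(-\infty,0]$ gives $\dot x_i\leq 0$ whenever $x_i\leq 0$, so the particle is pushed \emph{away} from the origin: defining $t^\ast:=\inf\{t:x_i(t)>0\}$ and noting that on $[0,t^\ast)$ one has $\dot x_i\leq 0$, we infer $x_i(t^\ast)\leq \overline{x}_i<0$, which contradicts $x_i(t^\ast)=0$. Hence $x_i(t)\leq \overline{x}_i<0$ for all times. In case (P4) the situation is more delicate, since $\dot x_i\geq 0$ for $x_i\leq 0$ and the particle is pushed \emph{toward} the origin; here the Lipschitz bound sharpens to $0\leq \phi(x_i)\leq -\Lip_\phi\,x_i$ (valid for $x_i\leq 0$), yielding $\dot x_i\leq -L'\,x_i$, equivalently $(x_i\,e^{L' t})'\leq 0$, and therefore $x_i(t)\leq \overline{x}_i\,e^{-L' t}<0$. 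The same continuity argument as in (P3) then excludes a first crossing time of $0$. The indices $i\in\{k_n+1,\dots,n\}$ are treated fully symmetrically on the positive half-line.

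The main obstacle is precisely the (P4) regime, where the drift actively pushes each particle toward $0$: without the Lipschitz control $|\phi(y)|\leq \Lip_\phi\,|y|$, which crucially depends on $\phi(0)=0$ together with (P), there would be no a priori way to exclude finite-time collision at the origin --- a phenomenon that, as already pointed out in the introduction, does occur for closely related continuum models. Everything else in the proof reduces to routine sign inspections and a single further application of Gronwall's inequality.
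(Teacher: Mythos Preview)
Your proof is correct and in places more streamlined than the paper's. For case (P3) with $i\neq k_n$ you do essentially what the paper does (first-crossing-time contradiction using $\dot x_i\leq 0$ while $x_i\leq 0$). For part (b) your Gronwall argument from $|\dot x_{k_n}|\leq L'\,|x_{k_n}|$ is considerably shorter than the paper's, which in (P3) builds two auxiliary comparison systems and in (P4) runs two separate one-sided contradictions.

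The genuine methodological difference is in case (P4) for $i\neq k_n$. You control the approach to the origin \emph{from the negative side} via the Lipschitz bound $\phi(y)\leq -\Lip_\phi\,y$ and Gronwall, obtaining the quantitative estimate $x_i(t)\leq \overline{x}_i\,e^{-L't}<0$. The paper instead places the barrier on the \emph{positive} side: assuming $x_i(t^\ast)>0$, it picks $t_2<t^\ast$ with $x_i(t_2)=0$ and $x_i>0$ on $(t_2,t^\ast]$; there $\phi(x_i)\leq 0$ by (P4), hence $\dot x_i\leq 0$ and $x_i(t^\ast)\leq 0$, a contradiction. That argument uses only the sign of $\phi$ on the positive half-line, not its modulus of continuity, so your closing remark that ``without the Lipschitz control \dots there would be no a priori way to exclude'' a crossing is too strong --- the paper's route is precisely such a way. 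Your approach buys a quantitative decay bound on $|x_i|$; the paper's buys independence from the size of $\Lip_\phi$ near the origin.
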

\begin{proof}
Let us first show \eqref{8} for a fixed $i\in \{0,\dots,k_{n}-1\}$ (for the remaining indices $i\in \{k_{n}+1,\dots,n\}$ we can follow a symmetric reasoning) and let us suppose that the solution to \eqref{3} and \eqref{4} exists on the time interval $[0,\tau)$ for a certain $\tau>0$ (possibly infinite): we have to prove that $x_{i}(t)\leq 0$ for all $t\in [0,\tau)$. Suppose by contradiction the existence of a time $0<t^*<\tau$ such that $x_{i}(t^*)>0$. Since $x_{i}(0)=\overline{x}_{i}<0$ and $x_{i}(t)$ is a continuous function, then the pre-image of $\{0\}$ via $x_i(\cdot)$ is a compact set in $(0,t^*)$. Therefore, there exist two positive times (possibly coinciding) $0<t_{1}\leq t_{2}<t^*$ such that
\begin{equation}
\label{520}
x_{i}(t_{1})=x_{i}(t_{2})=0, \quad x_{i}(t)<0 \ \text{for all} \ 0\leq t<t_{1} \quad \text{and} \ \quad x_{i}(t)>0 \ \text{for all} \ t_{2}< t\leq t^*.
\end{equation}
Let us now split the cases (P3) and (P4). Concerning the case (P3), from the first inequality in \eqref{520} we get that $\dot{x}_{i}(t)=v(R_{i-1}(t))\phi(x_{i}(t))\leq 0$ for all $0\leq t< t_{1}$, which implies
\begin{equation*}
x_{i}(t_{1})=\overline{x}_{i}+\int_{0}^{t_{1}}\dot{x}_{i}(t)dt<0
\end{equation*}
and this contradicts the fact that $x_{i}(t_{1})=0$. Turning to case (P4), using the second inequality in \eqref{520}, it follows that $\dot{x}_{i}(t)=v(R_{i}(t))\phi(x_{i}(t))\leq 0$ for all $t_{2}< t\leq t^{*}$, hence we get
\begin{equation*}
x_{i}(t^*)=x_{i}(t_{2})+\int_{t_{2}}^{t^*}\dot{x}_{i}(t)dt\leq 0,
\end{equation*}
which contradicts the fact that $x_{i}(t^*)>0$.

Now it remains to prove (b), indeed in case $\overline{x}_{k_n}<0$ we can repeat the same argument exposed above to show the validity of \eqref{8}. Let us now consider the cases (P3) and (P4) separately. Starting from case (P3), we first notice that $|v(R_{k_n-1}(t))\phi(x_{k_n}(t))|\leq v_{\max}|\phi(x_{k_n}(t))|$ for all $t\geq 0$. As a consequence, by comparison we have that the unique solutions to the two Cauchy problems
\begin{equation}
\label{523}
\begin{cases}
\dot{y}_{0}(t)=v_{\max}\phi(y_{0}(t)),
\\
\dot{y}_{i}(t)=v(R_{i-1}(t))\phi(y_{i}(t)), \ i\in\{1,\dots, k_n-1\}
\\
\dot{y}_{k_n}(t)=-v_{\max}|\phi(y_{k_n}(t))|,
\\
\dot{y}_{i}(t)=v(R_{i}(t))\phi(y_{i}(t)), \ i\in\{k_n+1,\dots, n-1\}
\\
\dot{y}_{n}(t)=v_{\max}\phi(y_{n}(t)),
\\
y_{i}(0)=\overline{x}_{i}, \ i\in\{0,\dots, n\}
\end{cases}
\!\!\!\!\! \text{and} \
\begin{cases}
\dot{z}_{0}(t)=v_{\max}\phi(z_{0}(t)),
\\
\dot{z}_{i}(t)=v(R_{i-1}(t))\phi(z_{i}(t)), \ i\in\{1,\dots, k_n-1\}
\\
\dot{z}_{k_n}(t)=v_{\max}|\phi(z_{k_n}(t))|,
\\
\dot{z}_{i}(t)=v(R_{i}(t))\phi(z_{i}(t)), \ i\in\{k_n+1,\dots, n-1\}
\\
\dot{z}_{n}(t)=v_{\max}\phi(z_{n}(t)),
\\
z_{i}(0)=\overline{x}_{i}, \ i\in\{0,\dots, n\}
\end{cases}
\end{equation}
are such that
\begin{equation*}
y_{i}(t)\leq x_{i}(t)\leq z_{i}(t) \quad \text{for all} \ t\in [0,\tau^*), \ i\in\{0,\dots,n\}, \quad \text{with} \ \tau^*:=\sup\{t\leq \tau: \text{the solutions to}\ \eqref{523}\ \text{exist}\}.
\end{equation*}
To see this, we observe that neither in \eqref{3} nor in \eqref{523} does the $(k_n)-$th particle affect the evolution of the other particles (which remains unchanged since no particle sees the $(k_n)-$th particle), hence it holds that $y_{i}(t)=z_{i}(t)=x_{i}(t)$ for all $t\in [0,\tau^{*})$ and $i\neq k_{n}$. Therefore, the evolution of the $(k_n)-$th particle is completely decoupled in both systems \eqref{523} and the choice of the velocities $\dot{y}_{k_n}$ and $\dot{z}_{k_n}$ easily implies the assertion. Now, since $y_{k_n}(0)=z_{k_n}(0)=0$ and since $\dot{y}_{k_n}(t)=-v_{\max}|\phi(y_{k_n}(t))|$ and $\dot{z}_{k_n}(t)=v_{\max}|\phi(z_{k_n}(t))|$ have the stationary solution $y_{k_n}(t)\equiv z_{k_n}(t)\equiv 0$, then it follows that $x_{k_n}(t)\equiv 0$ for all $t\in [0,\tau^*)$. Moreover, the only possible reason for which $\tau^*<\tau$ is that either $y_{k_n}$ hits $y_{k_n-1}$ or that $z_{k_n}$ hits $z_{k_n+1}$ on some $t<\tau^*$, which cannot happen due to $y_{k_n}(t)\equiv z_{k_n}(t)\equiv 0$ on $t\in [0,\tau^*)$. Hence, $\tau^*\geq \tau$ and (b) is proven.
\\
Concerning case (P4), we first show that $x_{k_n}(t)\leq 0$ for all $t\in [0,\tau)$ arguing again by contradiction and in particular supposing the existence of a time $0<t^*<\tau$ such that $x_{k_n}(t^*)>0$. Then there exists $0\leq \tilde{t}< t^*$ such that $x_{k_{n}}(\tilde{t})=0$ and $x_{k_n}(t)>0$ for all $\tilde{t}<t\leq t^*$. This implies that $\dot{x}_{k_{n}}(t)\leq 0$ for all $\tilde{t}<t\leq t^*$ and hence that
\begin{equation*}
x_{k_n}(t^*)=x_{k_n}(\tilde{t})+\int_{\tilde{t}}^{t^*}\dot{x}_{k_n}(t)dt\leq 0
\end{equation*}
which contradicts the fact that $x_{k_n}(t^*)>0$. Reasoning in a symmetric way, that is supposing by contradiction the existence of a time $0<t^*<\tau$ such that $x_{k_n}(t)<0$, we can show that $x_{k_n}(t)\geq 0$ for all $t\in [0,\tau)$, hence we finally get $x_{k_n}(t)\equiv 0$ for all $t\in [0,\tau)$.
\end{proof}

The next proposition ensures that particles never collide in all the four cases: this gives the global existence of the solution for \eqref{1}-\eqref{4}.

\begin{prop}[Discrete maximum principle]
\label{prop1}
Assume that (V), (I) and (P) are satisfied.
\begin{itemize}
\item If (P1) (respectively (P2), (P3)) holds then, as long as it exists, the solution to \eqref{1} (respectively to \eqref{2}, \eqref{3}) satisfies 
    \begin{equation}
\label{53}
x_{i+1}^{n}(t)-x_{i}^{n}(t)\geq \frac{\ell_{n}}{\overline{R}}e^{-L't} \quad \text{for} \ i\in \{0,\dots,n-1\}.
\end{equation}
\item If (V$^*$) and (P4) hold then, as long as it exists, the solution to \eqref{4} satisfies
\begin{equation}
\label{15}
x_{i+1}^{n}(t)-x_{i}^{n}(t)\geq \frac{\ell_{n}}{R_{\max}} \quad \text{for} \  i\in \{0,\dots,n-1\}.
\end{equation}
\end{itemize}
\end{prop}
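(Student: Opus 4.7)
The plan is to introduce $R_{i}(t):=\ell_{n}/(x_{i+1}(t)-x_{i}(t))$ and study the maximum
\[
R^{*}(t):=\max_{0\leq i\leq n-1}R_{i}(t).
\]
Being a maximum of finitely many $C^{1}$ functions, $R^{*}$ is locally Lipschitz and satisfies $\dot{R}^{*}(t)=\dot{R}_{i(t)}(t)$ at a.e.\ $t$ for any index $i(t)$ attaining the maximum. The atomization rule \eqref{5} together with $\overline{\rho}\leq\overline{R}$ a.e.\ yields $R^{*}(0)\leq\overline{R}$; under (V$^{*}$) in case (P4) this also gives $R^{*}(0)\leq R_{\max}$. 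It then suffices to derive a one-sided differential inequality for $R^{*}$ and close with Grönwall.

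For cases (P1)-(P2)-(P3) the key pointwise estimate is: \emph{if $R_{i}$ attains the maximum then $\dot{R}_{i}\leq L'R_{i}$}. Setting $d_{i}:=x_{i+1}-x_{i}=\ell_{n}/R_{i}$, this is equivalent to $\dot{d}_{i}\geq -L'd_{i}$. I would substitute the ODE defining each $\dot{x}_{i}$ and $\dot{x}_{i+1}$ and add/subtract to obtain a decomposition
\[
\dot{d}_{i}=\underbrace{\bigl[v(R_{\mathrm{nbr}})-v(R_{i})\bigr]\phi(x_{\bullet})}_{A_{i}}+\underbrace{v(R_{\mathrm{nbr}})\bigl[\phi(x_{i+1})-\phi(x_{i})\bigr]}_{B_{i}},
\]
where $R_{\mathrm{nbr}}$ denotes the relevant neighbouring ratio (or $v_{\max}$ at boundary indices) and $x_{\bullet}\in\{x_{i},x_{i+1}\}$ is chosen to match the shared $v$-factor. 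At the maximum index $R_{\mathrm{nbr}}\leq R_{i}$ and $v$ is non-increasing, so $v(R_{\mathrm{nbr}})-v(R_{i})\geq 0$; combined with the sign of $\phi$ enforced by (P1), (P2) or (P3), this makes $A_{i}\geq 0$. The term $B_{i}$ is bounded below by $-v_{\max}\|\phi'\|_{\infty}d_{i}=-L'd_{i}$ by the Lipschitz property of $\phi$. Hence $\dot{R}^{*}(t)\leq L'R^{*}(t)$ a.e., and Grönwall yields $R^{*}(t)\leq\overline{R}\,e^{L't}$, which is \eqref{53}.

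For case (P4) the mechanism is different and relies crucially on (V$^{*}$) rather than on a Grönwall factor. At any time $t$ and index $i$ with $R_{i}(t)=R^{*}(t)\geq R_{\max}$ one has $v(R_{i}(t))=0$. I would distinguish the subcases $i<k_{n}$, $i=k_{n}$ and $i>k_{n}$, invoking the sign-preservation proposition already established. For example, if $i<k_{n}$ then $\dot{x}_{i}=v(R_{i})\phi(x_{i})=0$, and since $x_{i+1}\leq 0$ forces $\phi(x_{i+1})\geq 0$ by (P4), one has $\dot{x}_{i+1}=v(R_{i+1})\phi(x_{i+1})\geq 0$; the case $i>k_{n}$ is symmetric; for $i=k_{n}$ both adjacent velocities contain the vanishing factor $v(R_{k_{n}})$ and $\dot{d}_{k_{n}}=0$. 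In each subcase $\dot{d}_{i}\geq 0$, so $\dot{R}^{*}(t)\leq 0$ whenever $R^{*}(t)\geq R_{\max}$. Since $R^{*}(0)\leq R_{\max}$, the continuous function $(R^{*}-R_{\max})_{+}$ starts at zero with non-positive a.e.\ derivative, so it vanishes identically, giving $R^{*}(t)\leq R_{\max}$ and hence \eqref{15}.

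The main obstacles are essentially bookkeeping rather than conceptual. The boundary indices $i=0$ or $i=n-1$ driven by $v_{\max}\phi$ fit the same telescoping scheme after using $v_{\max}\geq v(R_{i})$. The truly special situation is the index $i=k_{n}$ in case (P3), where $R_{k_{n}}$ does not appear in any ODE; this is handled separately and for free by observing that the two adjacent particles lie on opposite sides of the origin and are driven apart by $\phi$ (negative at $x_{k_{n}}\leq 0$, positive at $x_{k_{n}+1}\geq 0$ under (P3)), so $\dot{d}_{k_{n}}\geq 0$ unconditionally and \eqref{53} is trivially satisfied at this gap. A similar check is needed for the analogous non-appearing ratios in (P4), but there the $v(R_{\cdot})=0$ trick already covers them.
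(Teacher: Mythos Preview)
Your approach is correct and genuinely different from the paper's. The paper argues by a recursive contradiction: in case (P1) it starts from $i=n-1$, establishes the bound there via Gr\"onwall, and then assumes the bound fails for the first time at some index $j$ on a time interval $(\tilde t,t^*]$; on that interval the neighbouring gap already satisfies the bound, which forces $v(R_{j+1})\geq v(R_j)$ and a Gr\"onwall argument on $d_j$ gives a contradiction. Cases (P2)--(P4) are handled by analogous inductions, with a separate Gr\"onwall step for $i=k_n$ in (P3) using the already-proven bounds on $R_{k_n\pm 1}$, and a pure contradiction (no Gr\"onwall) in (P4) exploiting $v(R_j)=0$ once $R_j>R_{\max}$. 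Your route---tracking $R^*(t)=\max_i R_i(t)$, showing $\dot R^*\leq L'R^*$ a.e.\ (resp.\ $\dot R^*\leq 0$ on $\{R^*\geq R_{\max}\}$), and closing with a single Gr\"onwall---is more compact and avoids the index-by-index induction entirely. Your observation that in (P3) the gap $d_{k_n}$ satisfies $\dot d_{k_n}\geq 0$ unconditionally is also cleaner than the paper's treatment of that index.

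One correction: your displayed decomposition $\dot d_i=[v(R_{\mathrm{nbr}})-v(R_i)]\phi(x_\bullet)+v(R_{\mathrm{nbr}})[\phi(x_{i+1})-\phi(x_i)]$ is only valid as written in the forward regions (case (P1), the right half of (P3)). In the backward regions (case (P2), the left half of (P3)) one has $\dot d_i=v(R_i)\phi(x_{i+1})-v(R_{i-1})\phi(x_i)$, and the correct splitting is
\[
\dot d_i=\underbrace{[v(R_i)-v(R_{i-1})]\phi(x_i)}_{A_i}+\underbrace{v(R_i)[\phi(x_{i+1})-\phi(x_i)]}_{B_i},
\]
i.e.\ the roles of $R_i$ and $R_{\mathrm{nbr}}$ are swapped in both terms relative to your template. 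The conclusion $A_i\geq 0$ still holds (now because $v(R_i)-v(R_{i-1})\leq 0$ and $\phi(x_i)\leq 0$), and $B_i\geq -L'd_i$ as before; so your argument goes through once the formula is adjusted. Your claim ``$v(R_{\mathrm{nbr}})-v(R_i)\geq 0$ combined with the sign of $\phi$ makes $A_i\geq 0$'' is therefore not quite right in (P2) as stated, but the underlying mechanism is exactly what you describe.
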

\begin{proof}
We first observe that the statement is true for $t=0$, indeed by \eqref{5} and (I) it follows that
\begin{equation*}
\overline{x}_{i+1}-\overline{x}_{i}\geq \frac{\ell_{n}}{\overline{R}}\geq \frac{\ell_{n}}{R_{\max}} \quad \text{for all}\ i\in \{0,\dots,n-1\}.
\end{equation*}
We now consider the four cases separately, each one with a separate recursive argument.

Let us suppose (P1) holds and let us take as basis of our (backward) recursive argument the index $i=n-1$. Since the first order Taylor's expansion of $\phi$ at $x_{n-1}(t)$ is given by
\begin{equation*}
\phi(x_{n}(t))=\phi(x_{n-1}(t))+\phi'(\tilde{x}(t))(x_{n}(t)-x_{n-1}(t))
\end{equation*}
for some $\tilde{x}(t)\in \bigl(x_{n-1}(t),x_{n}(t)\bigr)$, then it follows that
\begin{equation*}
\begin{split}
\frac{d}{dt}\bigl[x_{n}(t)-x_{n-1}(t)\bigr]=&v_{\max}\phi(x_{n}(t))-v(R_{n-1}(t))\phi(x_{n-1}(t))
\\
=&\bigl(v_{\max}-v(R_{n-1}(t))\bigr)\phi(x_{n-1}(t))+v_{\max}\phi'(\tilde{x}(t))\bigl(x_{n}(t)-x_{n-1}(t)\bigr)
\\
\geq& v_{\max}\phi'(\tilde{x}(t))\bigl(x_{n}(t)-x_{n-1}(t)\bigr).
\end{split}
\end{equation*}
Therefore, applying Gronwall lemma, we get that
\begin{equation*}
x_{n}(t)-x_{n-1}(t)\geq \bigl(\overline{x}_{n}-\overline{x}_{n-1}\bigr)e^{\int_{0}^{t}v_{\max}\phi'(\tilde{x}(s))ds}\geq \frac{\ell_{n}}{\overline{R}} e^{\int_{0}^{t}v_{\max}\phi'(\tilde{x}(s))ds}
\end{equation*}
and finally, since the assumption (P) implies that $\phi'$ is bounded, from the previous inequality we easily get \eqref{53} for $i=n-1$.
\\
Concerning the remaining indices, we argue by contradiction and assume (without restriction) the existence of an index $j\in \{0,\dots, n-2\}$ and of a time $t^*>0$ such that
\begin{equation*}
x_{i+1}(t)-x_{i}(t)\geq \frac{\ell_{n}}{\overline{R}}e^{-L't} \quad \text{for} \ t\geq 0,\  i\in \{j+1,\dots,n-1\}
\end{equation*}
and
\begin{equation*}
x_{j+1}(t^*)-x_{j}(t^*)< \frac{\ell_{n}}{\overline{R}}e^{-L't^*}.
\end{equation*}
Since $f(t):=x_{j+1}(t)-x_{j}(t)$ and $\displaystyle g(t):=\frac{\ell_{n}}{\overline{R}}e^{-L't}$ are continuous functions satisfying $f(0)\geq g(0)$ and $f(t^*)<g(t^*)$, then by continuity there exists (at least) a time at which $f$ and $g$ coincide. As a consequence, we can define
\begin{equation*}
\tilde{t}:=\sup\{0\leq\tau < t^*: f(\tau)=g(\tau)\}
\end{equation*}
and we can claim that $x_{j+1}(t)-x_{j}(t)<\displaystyle\frac{\ell_{n}}{\overline{R}}e^{-L't}$ for $\tilde{t}<t\leq t^*$. Suppose indeed by contradiction the existence of $t_{1}\in (\tilde{t},t^*)$ such that $f(t_{1})\geq g(t_{1})$, then we can use again the previous continuity argument to get the existence of $t_{2}\in [t_{1},t^*)$ such that $f(t_{2})=g(t_{2})$ and this contradicts the fact that $\tilde{t}$ is the sup of the times with this property. As a consequence, for $t\in[\tilde{t},t^{*}]$ we have the following situation
\begin{equation}
\label{7}
x_{j+1}(t)-x_{j}(t)\begin{cases}
 = \frac{\ell_{n}}{\overline{R}}e^{-L'\tilde{t}} & \quad \text{for} \ t=\tilde{t},
\\
< \frac{\ell_{n}}{\overline{R}}e^{-L't} & \quad \text{for} \ \tilde{t}<t\leq t^*.
\end{cases}
\end{equation}
Using as before the first order Taylor's expansion of $\phi$ at $x_{j}(t)$, it holds
\begin{equation}
\label{100}
\begin{split}
\frac{d}{dt}\bigl[x_{j+1}(t)-x_{j}(t)\bigr]=&v(R_{j+1}(t))\phi(x_{j+1}(t))-v(R_{j}(t))\phi(x_{j}(t))
\\
=&\bigl(v(R_{j+1}(t))-v(R_{j}(t))\bigr)\phi(x_{j}(t))+v(R_{j+1}(t))\phi'(\tilde{x}(t))\bigl(x_{j+1}(t)-x_{j}(t)\bigr)
\end{split}
\end{equation}
for some $\tilde{x}(t)\in \bigl(x_{j}(t),x_{j+1}(t)\bigr)$. On the other hand, from the contradictory assumption it follows that
\begin{equation*}
R_{j+1}(t)\leq \overline{R}e^{L't}<R_{j}(t)
\end{equation*}
and hence, since $v$ is non-increasing, that
\begin{equation*}
v(R_{j}(t))\leq v\bigl(\overline{R}e^{L't}\bigr)\leq v(R_{j+1}(t))
\end{equation*}
for all $\tilde{t}<t\leq t^*$. Using these estimates in \eqref{100}, for all $\tilde{t}<t\leq t^*$ we get that
\begin{equation*}
\frac{d}{dt}\bigl[x_{j+1}(t)-x_{j}(t)\bigr]\geq v(R_{j+1}(t))\phi'(\tilde{x}(t))\bigl(x_{j+1}(t)-x_{j}(t)\bigr)
\end{equation*}
and finally, using again Gronwall lemma on the time interval $(\tilde{t},t)$ with $t\leq t^*$, it follows that
\begin{equation*}
x_{j+1}(t)-x_{j}(t)\geq \bigl(x_{j+1}(\tilde{t})-x_{j}(\tilde{t})\bigr)e^{\int_{\tilde{t}}^{t}v(R_{j+1}(s))\phi'(\tilde{x}(s))ds}\geq \frac{\ell_{n}}{\overline{R}}e^{-L't},
\end{equation*}
which contradicts \eqref{7}, since $\tilde{t}<t\leq t^*$.

Concerning case (P2), we can reason in a symmetric way, taking the index $i=0$ as basis of the recursive argument and assuming the existence of a 'first index $j$' at which the statement fails on a certain time interval (we omit the details).

In case (P3) holds, the proof of \eqref{53} for $i\in \{0,\dots,k_{n}-1\}$ and $i\in \{k_{n}+1,\dots,n\}$ is straightforward, indeed we can simply apply (rearranging the indices properly) the same arguments used in cases (P2) and (P1) respectively. Turning to the remaining case $i=k_{n}$, we first remark that
\begin{equation*}
v(R_{k_{n}-1}(t))\geq v(\overline{R}e^{L't}) \quad \text{and} \quad v(R_{k_{n}+1}(t))\geq v(\overline{R}e^{L't}) \ \text{for all}\ t\geq0,
\end{equation*}
since \eqref{53} is valid for $i=k_{n}-1$ and $i=k_{n}+1$.
As a consequence we have
\begin{equation*}
\begin{split}
\frac{d}{dt}\bigl[x_{k_{n}+1}(t)\!-\!x_{k_{n}}(t)\bigr] &\! =\! v(R_{k_{n}+1}(t))\phi(x_{k_{n}+1}(t))-v(R_{k_{n}-1}(t))\phi(x_{k_{n}}(t))\!\geq\! v(\overline{R}e^{L't})\bigl(\phi(x_{k_{n}+1}(t))-\phi(x_{k_{n}}(t))\bigr)
\end{split}
\end{equation*}
and, using the first order Taylor's expansion of $\phi$, it follows that
\begin{equation*}
\frac{d}{dt}\bigl[x_{k_{n}+1}(t)-x_{k_{n}}(t)\bigr] \geq v(\overline{R}e^{L't})\phi'(\tilde{x}(t))\bigl(x_{k_{n}+1}(t)-x_{k_{n}}(t)\bigr)
\end{equation*}
for some $\tilde{x}(t)\in \bigl(x_{k_{n}}(t),x_{k_{n}+1}(t)\bigr)$.
Finally, applying again Gronwall lemma we get
\begin{equation*}
x_{k_{n}+1}(t)-x_{k_{n}}(t)\geq \bigl(\overline{x}_{k_{n}+1}-\overline{x}_{k_{n}}\bigr)e^{\int_{0}^{t}v(\overline{R}e^{Ls})\phi'(\tilde{x}(s))ds}\geq \frac{\ell_{n}}{\overline{R}}e^{-L't}
\end{equation*}
and this concludes the proof for case (P3).

In case (P4), we prove \eqref{15} using a different recursive argument with respect to the previous cases. Let us first consider the indices $i\in \{0,\dots,k_{n}-1\}$, let us take as base case the index $i=k_{n}-1$ and suppose by contradiction the existence of a time $t^*>0$ such that $x_{k_{n}}(t^*)-x_{k_{n}-1}(t^*)<\frac{\ell_{n}}{R_{\max}}$. Using a similar continuity argument with respect to the proof of case (P1), we can prove the existence of $0\leq \tilde{t}<t^*$ such that
\begin{equation*}
x_{k_{n}}(t)-x_{k_{n}-1}(t)\begin{cases}& = \frac{\ell_{n}}{R_{\max}} \quad \text{for} \ t=\tilde{t},
\\
& < \frac{\ell_{n}}{R_{\max}} \quad \text{for} \ \tilde{t}<t\leq t^*.
\end{cases}
\end{equation*}
Integrating the ODE in \eqref{4}, for all $\tilde{t}<t\leq t^*$ it follows that
\begin{equation*}
\begin{split}
x_{k_{n}}(t)-x_{k_{n}-1}(t)=&x_{k_{n}}(\tilde{t})-x_{k_{n}-1}(\tilde{t})+\int_{\tilde{t}}^{t}\bigl[v(R_{k_{n}}(s))\phi({x}_{k_{n}}(s))-v(R_{k_{n}-1}(s))\phi(x_{k_{n}-1}(s))\bigr]ds,
\end{split}
\end{equation*}
where, due to the contradictory assumption, we have that $R_{k_{n}-1}(s)> R_{\max}$ and this implies, together with (V$^*$), that $v(R_{k_{n}-1}(s))=0$. Hence we get, for $\tilde{t}<t\leq t^*$, that
\begin{equation*}
\int_{\tilde{t}}^{t}\bigl[v(R_{k_{n}}(s))\phi({x}_{k_{n}}(s))-v(R_{k_{n}-1}(s))\phi(x_{k_{n}-1}(s))\bigr]ds=\int_{\tilde{t}}^{t}v(R_{k_{n}}(s))\phi({x}_{k_{n}}(s))ds\geq 0
\end{equation*}
and so it follows that
\begin{equation*}
x_{k_{n}}(t)-x_{k_{n}-1}(t)\geq x_{k_{n}}(\tilde{t})-x_{k_{n}-1}(\tilde{t}) = \frac{\ell_{n}}{R_{\max}},
\end{equation*}
which is a contradiction. For the remaining indices $i\in \{0,\dots,k_{n}-2\}$, we can repeat a recursive argument similar to case (P1), while
the validity of \eqref{15} for $i\in \{k_{n}+1,\dots,n-1\}$ can be proved in a symmetric way with respect to the previous indices: it is sufficient to take as base case the index $i=k_{n}+1$ and then proceeding by contradiction (the details are left to the reader). As a consequence, it remains to show \eqref{15} for $i=k_{n}$ and for this purpose we argue again by contradiction, supposing the existence of a time $t^*>0$ such that $x_{k_{n}}(t^*)-x_{k_{n}-1}(t^*)<\frac{\ell_{n}}{R_{\max}}$, which implies as before the existence of $0\leq \tilde{t}<t^*$ such that
\begin{equation*}
x_{k_{n}+1}(t)-x_{k_{n}}(t)\begin{cases}
& = \frac{\ell_{n}}{R_{\max}} \quad \text{for} \ t=\tilde{t},
\\
& < \frac{\ell_{n}}{R_{\max}} \quad \text{for} \ \tilde{t}<t\leq t^*.
\end{cases}
\end{equation*}
On the other hand, for all $\tilde{t}<t\leq t^*$ it holds that
\begin{equation*}
x_{k_{n}+1}(t)-x_{k_{n}}(t)=x_{k_{n}+1}(\tilde{t})-x_{k_{n}}(\tilde{t})+\int_{\tilde{t}}^{t}v(R_{k_{n}}(s))\bigl[\phi({x}_{k_{n}+1}(s))-\phi(x_{k_{n}}(s))\bigr]ds,
\end{equation*}
where the contradictory assumption implies that $R_{k_{n}}(s)> R_{\max}$ and hence, due to (V$^*$), that $v(R_{k_{n}}(s))=0$ for all $\tilde{t}<t\leq t^*$. From this it follows that
\begin{equation*}
x_{k_{n}+1}(t)-x_{k_{n}}(t)=x_{k_{n}+1}(\tilde{t})-x_{k_{n}}(\tilde{t})=\frac{\ell_{n}}{R_{\max}}\quad \text{for}\ \tilde{t}<t\leq t^*,
\end{equation*}
which is a contradiction and this concludes the proof for case (P4).
\end{proof}

\begin{rem}
The exponential rate in \eqref{53} is not optimal: using the same strategy, we can indeed prove that
\begin{equation*}
x_{i+1}(t)-x_{i}(t)\geq  \frac{\ell_{n}}{\overline{R}}e^{v_{\max}\phi'_{\inf}t}\quad \text{if}\ \phi'_{\inf}:=\inf_{x\in\mathbb{R}}\phi'(x)\leq 0
\end{equation*}
and
\begin{equation*}
x_{i+1}(t)-x_{i}(t)\geq  \frac{\ell_{n}}{\overline{R}}e^{v_{\min}\phi'_{\inf}t}\quad \text{if}\ \phi'_{\inf}\geq 0, \ \text{with} \ v_{\min}:=\min_{\rho\in \mathbb{R}^{+}} v(\rho).
\end{equation*}
\end{rem}

\begin{rem}
\label{rem1}
The estimates proven in Lemma \ref{lem7} are global-in-time in view of the discrete maximum principle above.
\end{rem}

\begin{rem}
We can slightly weaken our assumption (P) in three of the cases examined, indeed it is sufficient to take $\phi\in W^{2,\infty}([\overline{x}_{\min},+\infty))$ in case of forward movement, $\phi\in W^{2,\infty}((-\infty,\overline{x}_{\max}])$ in case of backward movement and $\phi\in W^{2,\infty}([\overline{x}_{\min},\overline{x}_{\max}])$ in case of attractive movement.
\end{rem}

According to our construction and the previous propositions and remarks, the evolution of the $n+1$ particles $x_{i}(t)$ is well defined for all $t\geq 0$, hence we can introduce a time-depending piecewise constant density having support in $[x_{0}(t),x_{n}(t)]$. Therefore, we set
\begin{equation}
\label{10}
\rho^{n}(x,t):=\sum_{i=0}^{n-1}R_{i}(t)\mathbbm{1}_{[x_{i}(t),x_{i+1}(t))}(x)=\sum_{i=0}^{n-1}\frac{\ell_{n}}{x_{i+1}(t)-x_{i}(t)}\mathbbm{1}_{[x_{i}(t),x_{i+1}(t))}(x).
\end{equation}

\section{BV estimate, time continuity and compactness}\label{sec:compactness}
We first show a uniform control of the total variation of $\rho^{n}$ which plays a key role in the proof of the convergence of our particle scheme. In the sequel, since we are interested on large values of $n\in \mathbb{N}$, without loss of generality we suppose $n$ sufficiently large such that $2\leq k_{n}\leq n-3$.

\begin{prop}
\label{prop2}
Assume (V), (I) and (P) are satisfied and, moreover, assume (V$^*$) is satisfied in case (P4). If one of (P1), (P2), (P3) or (P4) holds, then there exist four positive constants $\alpha$, $\beta$, $\gamma$ and $\zeta$, independent on $n$, such that
\begin{equation}
\label{11}
\TV[\rho^{n}(\cdot,t)]\leq \bigl(\TV[\overline{\rho}]+\alpha t+\beta e^{L't}\bigr) e^{[\gamma t(1+t)+\zeta e^{L't}]} \quad \text{for all}\ \ t\geq 0.
\end{equation}
\end{prop}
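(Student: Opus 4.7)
The plan is to decompose the total variation as
\[
\TV[\rho^{n}(\cdot,t)]= R_{0}(t)+R_{n-1}(t)+\sum_{i=0}^{n-2}\bigl|R_{i+1}(t)-R_{i}(t)\bigr|,
\]
since $\rho^{n}(\cdot,t)$ is a piecewise constant, compactly supported function with internal jumps $R_{i+1}-R_{i}$ and boundary jumps $R_{0},R_{n-1}$. The two boundary contributions are dealt with directly: Proposition \ref{prop1} gives $R_{0}(t),R_{n-1}(t)\le \overline{R}e^{L't}$ in cases (P1)--(P3) and $\le R_{\max}$ in case (P4), which already produces a term of the form $\beta e^{L't}$ in \eqref{11}. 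The core of the argument is therefore to derive a Gronwall-type differential inequality for $\Sigma(t):=\sum_{i}|\Delta_{i}(t)|$, with $\Delta_{i}:=R_{i+1}-R_{i}$.

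Next, differentiating $R_{i}=\ell_{n}/(x_{i+1}-x_{i})$ and using the relevant follow-the-leader system yields
\[
\dot{R}_{i}=-\frac{R_{i}^{2}}{\ell_{n}}\bigl(V_{i+1}-V_{i}\bigr),\qquad V_{j}:=v(R_{j})\phi(x_{j}),
\]
and hence $\dot{\Delta}_{i}=\dot{R}_{i+1}-\dot{R}_{i}=-(A_{i+1}-A_{i})$ with $A_{i}:=R_{i}^{2}(V_{i+1}-V_{i})/\ell_{n}$. I would then split
\[
V_{i+1}-V_{i}=\bigl[v(R_{i+1})-v(R_{i})\bigr]\phi(x_{i})+v(R_{i+1})\phi'(\xi_{i})\frac{\ell_{n}}{R_{i}},
\]
using the mean-value theorem on $\phi$, so that $A_{i}=\alpha_{i}\Delta_{i}+R_{i}v(R_{i+1})\phi'(\xi_{i})$ with $\alpha_{i}\le 0$ thanks to monotonicity of $v$ (in case (P1), where $\phi\ge 0$). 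Computing $\frac{d}{dt}|\Delta_{i}|=\sign(\Delta_{i})\dot{\Delta}_{i}$ and performing an Abel summation over $i$ produces, on the one hand, diagonal terms $\alpha_{i}|\Delta_{i}|\le 0$ (the discrete Oleinik-type dissipation absorbed in the negative part) and, on the other hand, a forcing coming from the differences of $R_{i}v(R_{i+1})\phi'(\xi_{i})$ across neighboring indices. Applying a second-order Taylor expansion of $\phi$ and using $\|\phi''\|_{L^\infty}<\infty$ (assumption (P)), together with Lipschitz continuity of $v$, these remainders are bounded either by $\Sigma(t)$ multiplied by factors involving $R_{i}\le \overline{R}e^{L't}$, or by "integral" terms $\sum_{i}R_{i}|\phi'|\ell_{n}/R_{i}\approx \|\phi'\|_{L^\infty}(x_{n}(t)-x_{0}(t))$, which by Lemma \ref{lem7} grow at most linearly in $t$. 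The end result is an inequality
\[
\frac{d}{dt}\Sigma(t)\le C_{1}(1+t)+C_{2}e^{L't}+\bigl[C_{3}(1+t)+C_{4}e^{L't}\bigr]\Sigma(t),
\]
with constants depending only on $v,\phi,\overline{x}_{\max}-\overline{x}_{\min}$ and $\overline{R}$. Integrating the coefficient in front of $\Sigma$ over $[0,t]$ yields the exponent $\gamma t(1+t)+\zeta e^{L't}$, and Gronwall's lemma gives exactly the bound \eqref{11}.

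Finally, cases (P2), (P3), (P4) require adaptations: (P2) is handled by a symmetric computation with the backward difference; (P3) splits the set of indices at $i=k_{n}$ and applies the (P2)-argument on $\{0,\ldots,k_{n}\}$ and the (P1)-argument on $\{k_{n},\ldots,n\}$; (P4) uses the uniform bound $R_{i}\le R_{\max}$ from (V$^{*}$) instead of $\overline{R}e^{L't}$, and exploits the fact that $v$ vanishes above $R_{\max}$ to handle the transition at $k_{n}$. The main obstacle across all four cases is the transition index $i=k_{n}$ in (P3)--(P4), where the upwind direction of the scheme changes and the telescoping summation used in the Abel step fails: the single index produces a residual jump that does not cancel. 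However, by the discrete maximum principle this residual is bounded by $2\overline{R}e^{L't}$ (respectively $2R_{\max}$), and so it is absorbed into the $\beta e^{L't}$ forcing. The second technical difficulty is tracking constants precisely enough to obtain the explicit structure $\gamma t(1+t)+\zeta e^{L't}$ in the exponent, which is achieved by separating at each step the contribution linear in $t$ (from the growth of $x_{n}-x_{0}$) from the exponential contribution (from the bound on $R_{i}$).
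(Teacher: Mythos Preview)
Your proposal is correct and follows essentially the same route as the paper: both write $\TV[\rho^n]=R_0+R_{n-1}+\sum_i|R_{i+1}-R_i|$, differentiate in time, perform an Abel resummation, exploit the monotonicity of $v$ (together with the sign of $\phi$) to show that the principal ``$\alpha$-type'' contributions are nonpositive, and control the remaining forcing via Taylor expansion of $\phi$, the bound $R_i\le\overline{R}e^{L't}$ from Proposition~\ref{prop1}, and the linear growth of $x_n-x_0$ from Lemma~\ref{lem7}, leading to the same Gronwall inequality. The only technical point the paper adds and you omit is a $C^1$ regularization $\eta_\sigma$ of the absolute value (so that $t\mapsto\TV_\sigma[\rho^n(\cdot,t)]$ is genuinely differentiable, the case analysis on the sign terms becomes exhaustive, and one lets $\sigma\to0$ at the end); your formal identity $\frac{d}{dt}|\Delta_i|=\operatorname{sign}(\Delta_i)\dot\Delta_i$ needs precisely this justification.
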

\begin{proof}
We first notice that
\begin{equation*}
\TV[\rho^{n}(\cdot,t)]=R_{0}(t)+R_{n-1}(t)+\sum_{i=0}^{n-2}\bigl|R_{i}(t)-R_{i+1}(t)\bigr|.
\end{equation*}
Now we would like to derive the previous identity: since the absolute value function is not derivable in $z=0$, we need to introduce one of its $C^1$ approximation, for instance we consider
\begin{equation*}
\eta_{\sigma}(z):=\begin{cases} |z| \quad &\text{if} \ |z|\geq \sigma,
\\
\frac{z^2}{2\sigma}+\frac{\sigma}{2} \quad &\text{if} \ |z|< \sigma,
\end{cases}
\quad \text{for some} \ \sigma>0.
\end{equation*}
We first notice that $\eta_{\sigma}(z)\geq 0$ for all $z\in \mathbb{R}$. Moreover, since
\begin{equation*}
\eta_{\sigma}'(z)=\begin{cases} \sign(z) \quad &\text{if} \ |z|\geq\sigma,
\\
\frac{z}{\sigma} \quad &\text{if} \ |z|<\sigma,
\end{cases}
\end{equation*}
then it follows that $|\eta_{\sigma}'(z)|\leq 1$ and
\begin{equation}
\label{987}
0\leq \eta'_{\sigma}(z)z\leq \eta_{\sigma}(z) \quad \text{for all} \ z\in \mathbb{R},
\end{equation}
in particular it holds that $|\eta_{\sigma}'(z)|= 1$ and $\eta'_{\sigma}(z)z=\eta_{\sigma}(z)$ for $|z|\geq \sigma$, while for $|z|<\sigma$ we have that $\displaystyle|\eta_{\sigma}'(z)|=\frac{|z|}{\sigma}<1$ and
\begin{equation*}
\eta'_{\sigma}(z)z=\frac{z^2}{\sigma}< \frac{z^2}{2\sigma}+\frac{\sigma}{2}=\eta_{\sigma}(z).
\end{equation*}
Then, defining
\begin{equation}
\label{122}
\TV_{\sigma}[\rho^{n}(\cdot,t)]:=R_{0}(t)+R_{n-1}(t)+\sum_{i=0}^{n-2}\eta_{\sigma}\bigl(R_{i}(t)-R_{i+1}(t)\bigr),
\end{equation}
it follows that
\begin{equation*}
\frac{d}{dt}\TV_{\sigma}[\rho^{n}(\cdot,t)]=\dot{R}_{0}(t)+\dot{R}_{n-1}(t)+\sum_{i=0}^{n-2}\eta_{\sigma}'\bigl(R_{i}(t)-R_{i+1}(t)\bigr)\bigl[\dot{R}_{i}(t)-\dot{R}_{i+1}(t)\bigr],
\end{equation*}
where we can rewrite the sum as
\begin{equation*}
\begin{split}
\sum_{i=0}^{n-2}\eta_{\sigma}'\bigl(R_{i}(t)-R_{i+1}(t)\bigr)\bigl[\dot{R}_{i}(t)-\dot{R}_{i+1}(t)\bigr]=&\eta_{\sigma}'\bigl(R_{0}(t)-R_{1}(t)\bigr)\dot{R}_{0}(t)-\eta_{\sigma}'\bigl(R_{n-2}(t)-R_{n-1}(t)\bigr)\dot{R}_{n-1}(t)
\\
&+\sum_{i=1}^{n-2}\bigl[\eta_{\sigma}'\bigl(R_{i}(t)-R_{i+1}(t)\bigr)-\eta_{\sigma}'\bigl(R_{i-1}(t)-R_{i}(t)\bigr)\bigr]\dot{R}_{i}(t).
\end{split}
\end{equation*}
Moreover, defining for brevity
\begin{equation*}
\begin{split}
&\mu_{0}(t):=1+\eta_{\sigma}'\bigl(R_{0}(t)-R_{1}(t)\bigr),
\\
&\mu_{n-1}(t):=1-\eta_{\sigma}'\bigl(R_{n-2}(t)-R_{n-1}(t)\bigr),
\\
&\mu_{i}(t):=\eta_{\sigma}'\bigl(R_{i}(t)-R_{i+1}(t)\bigr)-\eta_{\sigma}'\bigl(R_{i-1}(t)-R_{i}(t)\bigr) \quad \text{for}\ i=1,\dots,n-2,
\\
&\omega_{i}(t):=\eta_{\sigma}'\bigl(R_{i}(t)-R_{i+1}(t)\bigr)\quad \hspace{3.5cm}\text{for}\ i=0,\dots,n-1,
\end{split}
\end{equation*}
we hence get that
\begin{equation}
\label{126}
\frac{d}{dt}\TV_{\sigma}[\rho^{n}(\cdot,t)]= A(t)+B(t)+C(t),
\end{equation}
with
\begin{equation*}
A(t):= \mu_{0}(t)\dot{R}_{0}(t), \quad B(t):= \mu_{n-1}(t)\dot{R}_{n-1}(t) \quad \text{and} \quad C(t):= \sum_{i=1}^{n-2}\mu_{i}(t)\dot{R}_{i}(t).
\end{equation*}
Now we need to determine an upper bound for the functions $A(t)$, $B(t)$, $C(t)$ and we should treat as before the four cases separately.

Let us consider the first case (P1). For $A(t)$ and $B(t)$ we have, due to (P) and \eqref{53}, the following sub-cases:
\begin{equation}
A(t)\begin{cases} =0 \! &\text{if}\
R_{0}-R_{1}\leq -\sigma,
\\
\leq 2L' \overline{R} e^{L't}+ 2 \frac{\overline{R}^2\Lip_{v}||\phi||_{L^{\infty}(\mathbb{R})}}{\ell_{n}}\sigma\! &\text{if}\ |R_{0}-R_{1}|< \sigma,
\\
\leq 2L' \overline{R} e^{L't} \! &\text{if}\ R_{0}-R_{1}\geq \sigma
\end{cases}
\end{equation}
and
\begin{equation}
B(t)\begin{cases} =0 \ &\text{if}\ R_{n-2}-R_{n-1}\geq \sigma,
\\
\leq 2L' \overline{R} e^{L't} \ &\text{if}\ R_{n-2}-R_{n-1} <\sigma.
\end{cases}
\end{equation}
Let us first prove the estimates for $A(t)$, where we recall that
\begin{equation*}
A(t)=\bigl[1+\eta'_{\sigma}(R_{0}-R_{1})\bigr]\frac{R_{0}^2}{\ell_{n}}\bigl[v(R_{0})\phi(x_{0})-v(R_{1})\phi(x_{1})\bigr].
\end{equation*}
We have that:
\begin{itemize}
\item[(a)] If $R_{0}-R_{1}\leq -\sigma$, then $\eta'_{\sigma}(R_{0}-R_{1})=-1$ and so $A(t)=0$.
\item[(b)] If $R_{0}-R_{1}\geq \sigma$, then $\eta'_{\sigma}(R_{0}-R_{1})=1$, $v(R_{0})\leq v(R_{1})$ and so it holds that
\begin{equation*}
A(t)\leq 2\frac{R_{0}^2}{\ell_{n}}v(R_{1})\bigl[\phi(x_{0})-\phi(x_{1})\bigr]\leq 2L' \overline{R} e^{L't}.
\end{equation*}
\item[(c)] If $|R_{0}-R_{1}|<\sigma$, then $-1<\eta'_{\sigma}(R_{0}-R_{1})< 1$ and hence
\begin{equation*}
\begin{split}
A(t)=& \bigl[1+\eta'_{\sigma}(R_{0}-R_{1})\bigr]\frac{R_{0}^2}{\ell_{n}}v(R_{0})\bigl[\phi(x_{0})-\phi(x_{1})\bigr]+ \bigl[1+\eta'_{\sigma}(R_{0}-R_{1})\bigr]\frac{R_{0}^2}{\ell_{n}}\phi(x_{1})\bigl[v(R_{0})-v(R_{1})\bigr]
\\
\leq & 2L' \overline{R} e^{L't}+ 2 \frac{\overline{R}^2||\phi||_{L^{\infty}(\mathbb{R})}}{\ell_{n}}\bigl|v(R_{0})-v(R_{1})\bigr|\leq 2L' \overline{R} e^{L't}+ 2 \frac{\overline{R}^2\Lip_{v}||\phi||_{L^{\infty}(\mathbb{R})}}{\ell_{n}}\sigma.
\end{split}
\end{equation*}
\end{itemize}
Turning to $B(t)$, where
\begin{equation*}
B(t)= \bigl[1-\eta_{\sigma}'\bigl(R_{n-2}-R_{n-1}\bigr)\bigr]\frac{R_{n-1}^2}{\ell_{n}}\bigl[v(R_{n-1})\phi(x_{n-1})-v_{\max}\phi(x_{n})\bigr],
\end{equation*}
it holds that:
\begin{itemize}
\item[(a)] If $R_{n-2}-R_{n-1}\geq \sigma$, then $\eta'_{\sigma}(R_{n-2}-R_{n-1})=1$ and so $B(t)=0$.
\item[(b)] If $R_{n-2}-R_{n-1}< \sigma$, then $-1\leq \eta'_{\sigma}(R_{n-2}-R_{n-1})<1$ and so we get that
\begin{equation*}
B(t)\leq 2\frac{R_{n-1}^2}{\ell_{n}}v_{\max}\bigl[\phi(x_{n-1})-\phi(x_{n})\bigr]\leq 2L' \overline{R} e^{L't}.
\end{equation*}
\end{itemize}
Regarding the sum $C(t)$, we can split it as
\begin{equation*}
\begin{split}
C(t) =\sum_{i=1}^{n-2}I_{i}(t)+\sum_{i=1}^{n-2}\mu_{i}(t)R_{i}(t)I\! I_{i}(t),
\end{split}
\end{equation*}
where
\begin{equation*}
I_{i}(t):=\mu_{i}(t)\frac{R_{i}(t)^{2}}{\ell_{n}}\phi(x_{i+1}(t))\bigl[v(R_{i}(t))-v(R_{i+1}(t))\bigr] \quad \text{and}\quad I\! I_{i}(t):= \frac{R_{i}(t)}{\ell_{n}}v(R_{i}(t))\bigl[\phi(x_{i}(t))-\phi(x_{i+1}(t))\bigr].
\end{equation*}
Now we show that
\begin{equation*}
I_{i}(t)\leq \frac{c}{\ell_{n}}\sigma \quad \text{for all} \ i\in \{1,\dots,n-2\},
\end{equation*}
where $c$ is a non-negative constant depending only on $\overline{\rho}$, $\phi$ and $v$. In particular we have that:
\begin{itemize}
\item[(a)] If $R_{i}-R_{i+1}\geq \sigma$ and $R_{i-1}-R_{i}\geq \sigma$ or if $R_{i}- R_{i+1}\leq -\sigma$ and $R_{i-1}- R_{i}\leq -\sigma$, then $\mu_{i}(t)=0$ and so $I_{i}(t)=0$.
\item[(b)] If $R_{i}-R_{i+1}\geq \sigma$ and $R_{i-1}-R_{i}\leq -\sigma$, then $\mu_{i}(t)=2$, $v(R_{i})\leq v(R_{i+1})$ and hence
\begin{equation*}
I_{i}(t)=2\frac{R_{i}^{2}}{\ell_{n}}\phi(x_{i+1})\bigl[v(R_{i})-v(R_{i+1})\bigr]\leq 0.
\end{equation*}
\item[(c)] If $R_{i}-R_{i+1}\leq -\sigma$ and $R_{i-1}-R_{i}\geq \sigma$, then $\mu_{i}(t)=-2$, $v(R_{i+1})\leq v(R_{i})$ and so as before $I_{i}(t)\leq 0$.
\item[(d)] If $R_{i}-R_{i+1}\geq \sigma$ and $|R_{i-1}-R_{i}|< \sigma$, then $0<\mu_{i}(t)<2$,  $v(R_{i})\leq v(R_{i+1})$ and so again $I_{i}(t)\leq 0$.
\item[(e)] If $R_{i}-R_{i+1}\leq -\sigma$ and $|R_{i-1}-R_{i}|< \sigma$, then $-2<\mu_{i}(t)<0$,  $v(R_{i+1})\leq v(R_{i})$ and hence $I_{i}(t)\leq 0$.
\item[(f)] If $|R_{i}-R_{i+1}|< \sigma$ and $R_{i-1}-R_{i}\leq -\sigma$, then $0<\mu_{i}(t)<2$ and therefore
\begin{equation*}
I_{i}(t)\leq 2\frac{\overline{R}^{2}||\phi||_{L^{\infty}(\mathbb{R})}}{\ell_{n}}\bigl|v(R_{i})-v(R_{i+1})\bigr| \leq 2\frac{\overline{R}^{2}\Lip_{v}||\phi||_{L^{\infty}(\mathbb{R})}}{\ell_{n}}\sigma.
\end{equation*}
\item[(g)] If $|R_{i}-R_{i+1}|< \sigma$ and $R_{i-1}-R_{i}\geq \sigma$, then $-2<\mu_{i}(t)<0$ and hence it holds the same estimate of the previous case.
\item[(h)] If $|R_{i}-R_{i+1}|< \sigma$ and $|R_{i-1}-R_{i}|< \sigma$, then $-2<\mu_{i}(t)<2$ and again we have that $I_{i}(t)$ satisfies the same estimate as in case (f).
\end{itemize}
As a consequence, we have that
\begin{equation}
C(t) \leq \frac{c}{\ell_{n}^2}\sigma+\sum_{i=1}^{n-2}\mu_{i}(t)R_{i}(t)I\! I_{i}(t)
\end{equation}
and we can rewrite the sum as
\begin{equation}
\label{eq24}
\begin{split}
\sum_{i=1}^{n-2}\mu_{i}(t)R_{i}(t)I\! I_{i}(t)=&\sum_{i=1}^{n-2}\omega_{i}(t) R_{i}(t)I\! I_{i}(t)-\sum_{i=0}^{n-3}\omega_{i}(t)R_{i+1}(t)I\! I_{i+1}(t)
\\
= &\sum_{i=0}^{n-2}\omega_{i}(t)\bigl[R_{i}(t)I\! I_{i}(t)-R_{i+1}(t)I\! I_{i+1}(t)\bigr]-\omega_{0}(t)R_{0}(t)I\! I_{0}(t)+\omega_{n-2}(t)R_{n-1}(t)I\! I_{n-1}(t),
\end{split}
\end{equation}
where, since
\begin{equation*}
\bigl|I\! I_{i}(t)\bigr|=\frac{R_{i}}{\ell_{n}}v(R_{i})\bigl|\phi(x_{i})-\phi(x_{i+1})\bigr|\leq L'\quad \text{for} \ i\in \{0,\dots,n-1\},
\end{equation*}
then it holds
\begin{equation}
\label{127}
\bigl|-\omega_{0}(t)R_{0}(t)I\! I_{0}(t)\bigr|\leq L'\overline{R}e^{L't} \quad \text{and}\quad \bigl|\omega_{n-2}(t) R_{n-1}(t)I\!I_{n-1}(t)\bigr|\leq L'\overline{R}e^{L't}.
\end{equation}
Moreover, we can rewrite the sum in the right-hand side of \eqref{eq24} as
\begin{equation*}
\begin{split}
\sum_{i=0}^{n-2}\omega_{i}(t)\bigl[R_{i}(t)I\!I_{i}(t)\!-\!R_{i+1}(t)I\! I_{i+1}(t)\bigr]=\!\sum_{i=0}^{n-2}\omega_{i}(t)R_{i}(t)\bigl(I\! I_{i}(t)\!-\!I\! I_{i+1}(t)\bigr)+\!\sum_{i=0}^{n-2}\omega_{i}(t)I\! I_{i+1}(t)\bigl(R_{i}(t)-R_{i+1}(t)\bigr),
\end{split}
\end{equation*}
where, due to \eqref{987}, we immediately remark that
\begin{equation}
\label{128}
\sum_{i=0}^{n-2}\omega_{i}(t)I\! I_{i+1}(t)\bigl(R_{i}(t)-R_{i+1}(t)\bigr)\leq L'\sum_{i=0}^{n-2}\eta_{\sigma}\bigl(R_{i}(t)-R_{i+1}(t)\bigr)\leq L'\TV_{\sigma}[\rho^{n}(\cdot,t)].
\end{equation}
Turning to the remaining sum, from the second order Taylor's expansion of $\phi$ at $x_{i+1}(t)$, we get
\begin{equation*}
\phi(x_{i}(t))=\phi(x_{i+1}(t))-\phi'(x_{i+1}(t))\bigl(x_{i+1}(t)-x_{i}(t)\bigr)+\frac{\phi^{''}(\tilde{x}_{i,i+1}(t))}{2}\bigl(x_{i+1}(t)-x_{i}(t)\bigr)^{2}
\end{equation*}
and
\begin{equation*}
\begin{split}
\phi(x_{i+2}(t))=&\phi(x_{i+1}(t))+\phi'(x_{i+1}(t))\bigl(x_{i+2}(t)-x_{i+1}(t)\bigr)+\frac{\phi^{''}(\tilde{y}_{i+1,i+2}(t))}{2}\bigl(x_{i+2}(t)-x_{i+1}(t)\bigr)^{2}
\end{split}
\end{equation*}
for some $\tilde{x}_{i,i+1}(t)\in \bigl(x_{i}(t),x_{i+1}(t)\bigr)$ and $\tilde{y}_{i+1,i+2}(t)\in \bigl(x_{i+1}(t),x_{i+2}(t)\bigr)$. Hence we can rewrite the sum as
\begin{equation}
\label{129}
\begin{split}
\sum_{i=0}^{n-2}\omega_{i}(t)R_{i}(t)\bigl(I\! I_{i}(t)\!-\!I\! I_{i+1}(t)\bigr)= D_{1}(t)+D_{2}(t)+D_{3}(t),
\end{split}
\end{equation}
with
\begin{equation*}
D_{1}(t):= \sum_{i=0}^{n-2}\omega_{i}(t)R_{i}(t)\phi'(x_{i+1}(t))\bigl[v(R_{i+1}(t))-v(R_{i}(t))\bigr],
\end{equation*}
\begin{equation*}
D_{2}(t):=\sum_{i=0}^{n-2}\omega_{i}(t)\frac{R_{i}(t)^{2}}{2\ell_{n}}\phi''(\tilde{x}_{i,i+1}(t))v(R_{i}(t))\bigl(x_{i+1}(t)-x_{i}(t)\bigr)^{2}
\end{equation*}
and
\begin{equation*}
D_{3}(t):=\sum_{i=0}^{n-2}\omega_{i}(t)\frac{R_{i}(t)R_{i+1}(t)}{2\ell_{n}}\phi''(\tilde{y}_{i+1,i+2}(t))v(R_{i+1}(t))\bigl(x_{i+2}(t)-x_{i+1}(t)\bigr)^{2}.
\end{equation*}
We first notice, since $n\ell_{n}=1$, that $D_{2}(t)$ satisfies
\begin{equation}
\label{131}
D_{2}(t)=\frac{\ell_{n}}{2}\sum_{i=0}^{n-2}\omega_{i}(t)\phi''(\tilde{x}_{i,i+1}(t))v(R_{i}(t))\leq v_{\max}||\phi''||_{L^{\infty}(\mathbb{R})}.
\end{equation}
Concerning $D_{1}(t)$, from \eqref{53} and \eqref{987} it follows that
\begin{equation}
\label{132}
\begin{split}
D_{1}(t)&\leq \Lip_{v}||\phi'||_{L^{\infty}(\mathbb{R})}\overline{R}e^{L't}\sum_{i=0}^{n-2}\eta_{\sigma}\bigl(R_{i}(t)-R_{i+1}(t)\bigr)\leq \Lip_{v}||\phi'||_{L^{\infty}(\mathbb{R})}\overline{R}e^{L't}\TV_{\sigma}[\rho^n(\cdot,t)].
\end{split}
\end{equation}
Turning to the last sum $D_{3}(t)$, defining for brevity
\begin{equation*}
\gamma_{i}(t):=\frac{\phi''(\tilde{y}_{i,i+1}(t))}{2\ell_{n}}v(R_{i})\bigl(x_{i+1}(t)-x_{i}(t)\bigr)^{2}\quad \text{for}\ i\in \{1,\dots,n-1\},
\end{equation*}
we can rewrite $D_{3}(t)$ as
\begin{equation*}
\begin{split}
D_{3}(t)&=\sum_{i=0}^{n-2}\omega_{i}(t)R_{i+1}(t)^{2}\gamma_{i+1}(t)+\sum_{i=0}^{n-2}\omega_{i}(t)\bigl(R_{i}(t)-R_{i+1}(t)\bigr)R_{i+1}(t)\gamma_{i+1}(t):= D_{3}^{1}(t)+D_{3}^{2}(t),
\end{split}
\end{equation*}
where $D_{3}^{1}(t)$ satisfies the same inequality as $D_{2}(t)$, while \eqref{130} and \eqref{987} implies that
\begin{equation}
\label{133}
\begin{split}
D_{3}^{2}(t)&=\frac{1}{2}\sum_{i=0}^{n-2}\omega_{i}(t) \bigl(R_{i}(t)-R_{i+1}(t)\bigr)\phi''(\tilde{y}_{i+1,i+2}(t))v(R_{i+1}(t))\bigl(x_{i+2}(t)-x_{i+1}(t)\bigr)
\\
&\leq ||\phi''||_{L^{\infty}(\mathbb{R})}v_{\max}\bigl(\overline{x}_{\max}-\overline{x}_{\min}+2Lt\bigr)\sum_{i=0}^{n-2}\eta_{\sigma}\bigl(R_{i}(t)-R_{i+1}(t)\bigr)
\\
&\leq ||\phi''||_{L^{\infty}(\mathbb{R})}v_{\max}\bigl(\overline{x}_{\max}-\overline{x}_{\min}+2Lt\bigr)\TV_{\sigma}[\rho^n(\cdot,t)].
\end{split}
\end{equation}
Putting together \eqref{126}-\eqref{133}, we finally get the following differential inequality
\begin{equation}
\label{23}
\frac{d}{dt}\TV_{\sigma}[\rho^{n}(\cdot,t)]\leq  \frac{c_{0}}{\ell_{n}^2}\sigma+ c_{1}+c_{2}e^{L't}+\bigl(c_{3}+c_{4}t+c_{5}e^{L't}\bigr)\TV_{\sigma}[\rho^{n}(\cdot,t)] \quad \text{for all}\ t\geq 0,
\end{equation}
where $c_{i}$,  $i=0,\dots,5$, are six positive constants depending only on $\phi$, $v$ and $\overline{\rho}$.

In case (P2) we can proceed in a symmetric way and we get also here \eqref{23} (the details are omitted).

Turning to case (P3), we first remark that the bounds for $A(t)$ and $B(t)$ are the same already pointed out in cases (P2) and (P1) respectively. Regarding $C(t)$, we split this sum as
\begin{equation}
\label{24}
C(t)=\sum_{i=1}^{k_{n}-1}\mu_{i}(t)\dot{R}_{i}(t)+\sum_{i=k_{n}+1}^{n-2}\mu_{i}(t)\dot{R}_{i}(t)+\mu_{k_{n}}(t)\dot{R}_{k_{n}}(t):= C_{1}(t)+C_{2}(t)+C_{3}(t),
\end{equation}
where $C_{1}(t)$ and $C_{2}(t)$ can be estimated, rearranging  the indices properly, using the same reasoning as in cases (P2) and (P1) respectively, while for $C_{3}(t)$, which is given by
\begin{equation*}
C_{3}(t)=\bigl[\eta_{\sigma}'\bigl(R_{k_n}-R_{k_n+1}\bigr)-\eta_{\sigma}'\bigl(R_{k_n-1}-R_{k_n}\bigr)\bigr]\frac{R_{k_n}^2}{\ell_{n}}\bigl[v(R_{k_n-1})\phi(x_{k_n})-v(R_{k_n+1})\phi(x_{k_n+1})\bigr],
\end{equation*}
we now show that
\begin{equation*}
C_{3}(t)\leq  2L' \overline{R} e^{L't}+ 4\frac{\overline{R}^2\Lip_{v}||\phi||_{L^{\infty}(\mathbb{R})}}{\ell_{n}}\sigma.
\end{equation*}
We need to distinguish the following cases:
\begin{itemize}
\item[(a)] If $R_{k_{n}}-R_{k_{n}+1}\geq \sigma$ and $R_{k_{n}-1}-R_{k_{n}}\geq \sigma$ or if $R_{k_{n}}-R_{k_{n}+1}\leq -\sigma$ and $R_{k_{n}-1}-R_{k_{n}}\leq -\sigma$, then $\mu_{k_{n}}(t)=0$ and hence $C_{3}(t)=0$.
\item[(b)] If $R_{k_{n}}-R_{k_{n}+1}\geq \sigma$ and $R_{k_{n}-1}-R_{k_{n}}\leq -\sigma$, then $\mu_{k_{n}}(t)=2$ and so, since $\phi(x_{k_{n}})\leq 0$ and $\phi(x_{k_{n}+1})\geq 0$, it holds that $C_{3}(t)\leq 0$.
\item[(c)] If $R_{k_{n}}-R_{k_{n}+1}\leq -\sigma$ and $R_{k_{n}-1}-R_{k_{n}}\geq \sigma$, then it follows $\mu_{k_{n}}(t)=-2$, $v(R_{k_{n}-1})\leq v(R_{k_{n}})$, $v(R_{k_{n}+1})\leq v(R_{k_{n}})$ and therefore, since $\phi(x_{k_{n}})\leq 0$ and $\phi(x_{k_{n}+1})\geq 0$, we get
\begin{equation*}
\begin{split}
C_{3}(t)\leq 2\frac{R_{k_{n}}^{2}}{\ell_{n}}v(R_{k_{n}})\bigl[\phi(x_{k_{n}+1})-\phi(x_{k_{n}})\bigr]
\leq 2L'\overline{R}e^{L't}.
\end{split}
\end{equation*}
\item[(d)] If $R_{k_{n}}-R_{k_{n}+1}\geq \sigma$ and $|R_{k_{n}-1}-R_{k_{n}}|< \sigma$, then we have $0<\mu_{k_{n}}(t)<2$, $v(R_{k_{n}+1})\geq v(R_{k_{n}})$ and so, since $\phi(x_{k_{n}+1})\geq 0$, we get
\begin{equation*}
\begin{split}
C_{3}(t)& \leq \mu_{k_n}(t)\frac{R_{k_{n}}^{2}}{\ell_{n}}\bigl[v(R_{k_{n}-1})\phi(x_{k_{n}})-v(R_{k_{n}})\phi(x_{k_{n}+1})\bigr]
\\
&=\mu_{k_n}(t)\frac{R_{k_{n}}^{2}}{\ell_{n}}v(R_{k_{n}-1})\bigl[\phi(x_{k_{n}})-\phi(x_{k_{n}+1})\bigr]+\mu_{k_n}(t)\frac{R_{k_{n}}^{2}}{\ell_{n}}\phi(x_{k_{n}+1})\bigl[v(R_{k_{n}-1})-v(R_{k_{n}})\bigr]
\\
&\leq 2L' \overline{R} e^{L't}+ 2 \frac{\overline{R}^2\Lip_{v}||\phi||_{L^{\infty}(\mathbb{R})}}{\ell_{n}}\sigma.
\end{split}
\end{equation*}
\item[(e)] If $R_{k_{n}}-R_{k_{n}+1}\leq -\sigma$ and $|R_{k_{n}-1}-R_{k_{n}}|< \sigma$, then it follows $-2<\mu_{k_{n}}(t)<0$, $v(R_{k_{n}+1})\leq v(R_{k_{n}})$ and hence, since $\phi(x_{k_{n}+1})\geq 0$, it holds the same estimate as in the previous case.
\item[(f)] If $|R_{k_{n}}-R_{k_{n}+1}|< \sigma$ and $R_{k_{n}-1}-R_{k_{n}}\leq - \sigma$, then it follows $0<\mu_{k_{n}}(t)<2$, $v(R_{k_{n}-1})\geq v(R_{k_{n}})$ and therefore, since $\phi(x_{k_{n}})\leq 0$, we get
\begin{equation*}
\begin{split}
C_{3}(t)& \leq \mu_{k_n}(t)\frac{R_{k_{n}}^{2}}{\ell_{n}}\bigl[v(R_{k_{n}})\phi(x_{k_{n}})-v(R_{k_{n}+1})\phi(x_{k_{n}+1})\bigr]
\\
&=\mu_{k_n}(t)\frac{R_{k_{n}}^{2}}{\ell_{n}}v(R_{k_{n}})\bigl[\phi(x_{k_{n}})-\phi(x_{k_{n}+1})\bigr]+\mu_{k_n}(t)\frac{R_{k_{n}}^{2}}{\ell_{n}}\phi(x_{k_{n}+1})\bigl[v(R_{k_{n}})-v(R_{k_{n}+1})\bigr]
\\
&\leq 2L' \overline{R} e^{L't}+ 2 \frac{\overline{R}^2\Lip_{v}||\phi||_{L^{\infty}(\mathbb{R})}}{\ell_{n}}\sigma.
\end{split}
\end{equation*}
\item[(g)] If $|R_{k_{n}}-R_{k_{n}+1}|< \sigma$ and $R_{k_{n}-1}-R_{k_{n}}\geq \sigma$, then we have $-2<\mu_{k_{n}}(t)<0$, $v(R_{k_{n}-1})\leq v(R_{k_{n}})$ and hence, since $\phi(x_{k_{n}})\leq 0$, it holds the same estimate as in the previous case.
\item[(h)] If $|R_{k_{n}}-R_{k_{n}+1}|< \sigma$ and $|R_{k_{n}-1}-R_{k_{n}}|< \sigma$, then it follows $-2<\mu_{k_{n}}(t)<2$ and therefore we get
\begin{equation*}
\begin{split}
C_{3}(t)&=\mu_{k_n}(t)\frac{R_{k_{n}}^{2}}{\ell_{n}}v(R_{k_{n}-1})\bigl[\phi(x_{k_{n}})-\phi(x_{k_{n}+1})\bigr]+\mu_{k_n}(t)\frac{R_{k_{n}}^{2}}{\ell_{n}}\phi(x_{k_{n}+1})\bigl[v(R_{k_{n}-1})-v(R_{k_{n}+1})\bigr]
\\
&\leq 2L' \overline{R} e^{L't}+ 2 \frac{\overline{R}^2\Lip_{v}||\phi||_{L^{\infty}(\mathbb{R})}}{\ell_{n}}|R_{k_n-1}-R_{k_n+1}|\leq 2L' \overline{R} e^{L't}+ 4 \frac{\overline{R}^2\Lip_{v}||\phi||_{L^{\infty}(\mathbb{R})}}{\ell_{n}}\sigma.
\end{split}
\end{equation*}
\end{itemize}
As a consequence, we hence have that \eqref{23} is valid also in case (P3).

In the last case (P4), we can estimate $A(t)$ and $B(t)$ in the same way as in cases (P1) and (P2) respectively, substituting $\overline{R}e^{L't}$ with $R_{\max}$ whenever it appears. Concerning $C(t)$, we can rewrite this sum using the same splitting \eqref{24} seen in case (P3), where $C_{3}(t)$ now satisfies $C_{3}(t)\leq 2L'R_{\max}$, since
\begin{equation*}
C_{3}(t)=\mu_{k_n}(t)\frac{R_{k_n}^2}{\ell_{n}}\bigl[\dot{x}_{k_n}-\dot{x}_{k_{n}+1}\bigr]=\mu_{k_n}(t)\frac{R_{k_n}^2}{\ell_{n}}v(R_{k_n})\bigl[\phi(x_{k_n})-\phi(x_{k_{n}+1})\bigr]\leq  2L'R_{\max}.
\end{equation*}
For $C_{1}(t)$ and $C_{2}(t)$, rearranging the indices properly and substituting again $\overline{R}e^{L't}$ with $R_{\max}$, we can follow the same reasoning as in cases (P1) and (P2) respectively: in this way, we  obtain all the previous estimates with a slight difference only on the term $D_{3}^{2}(t)$. For this sum it holds, due to remark \ref{rem1} and \eqref{987}, that
\begin{equation*}
\begin{split}
D_{3}^{2}(t)&=\frac{1}{2}\sum_{i=0}^{n-2}\omega_{i}(t)\bigl(R_{i}(t)-R_{i+1}(t)\bigr)\phi''(\tilde{y}_{i+1,i+2}(t))v(R_{i+1}(t))\bigl(x_{i+2}(t)-x_{i+1}(t)\bigr)
\\
&\leq ||\phi''||_{L^{\infty}(\mathbb{R})}v_{\max}(\overline{x}_{\max}-\overline{x}_{\min})\sum_{i=0}^{n-2}\eta_{\sigma}\bigl(R_{i}(t)-R_{i+1}(t)\bigr)
\\
&\leq ||\phi''||_{L^{\infty}(\mathbb{R})}v_{\max}(\overline{x}_{\max}-\overline{x}_{\min})\TV_{\sigma}[\rho^n(\cdot,t)].
\end{split}
\end{equation*}

Applying now Gronwall lemma to \eqref{23} and letting $\alpha\rightarrow 0$, we get
\begin{equation}
\label{26}
\begin{split}
\TV[\rho^{n}(\cdot,t)]\leq& \TV[\rho^{n}(\cdot,0)]e^{\int_{0}^{t}(c_{3}+c_{4}\tau+c_{5}e^{L'\tau})d\tau}+\int_{0}^{t}(c_{1}+c_{2}e^{L's})e^{\int_{s}^{t}(c_{3}+c_{4}\tau+c_{5}e^{L'\tau})d\tau}ds.
\end{split}
\end{equation}
Concerning the total variation of $\rho^{n}(\cdot,0)$, since \eqref{5} and the mean value theorem imply
\begin{equation*}
R_{i}(0)=\frac{\ell_{n}}{\overline{x}_{i+1}-\overline{x}_{i}}= \fint_{\overline{x}_{i}}^{\overline{x}_{i+1}}\overline{\rho}(x)dx=\overline{\rho}(z_{i}) \quad \text{for some} \ z_{i}\in (\overline{x}_{i},\overline{x}_{i+1}),
\end{equation*}
then it holds that
\begin{equation*}
\begin{split}
\TV[\rho^{n}(\cdot,0)]&=R_{0}(0)+R_{n-1}(0)+\sum_{i=0}^{n-2}|R_{i}(0)-R_{i+1}(0)|=\overline{\rho}(z_{0})+\overline{\rho}(z_{n-1})+\sum_{i=0}^{n-2}|\overline{\rho}(z_{i})-\overline{\rho}(z_{i+1})|\leq \TV[\overline{\rho}].
\end{split}
\end{equation*}
Moreover, defining
\begin{equation*}
\alpha:=c_{1}, \quad \beta:= \frac{c_{2}}{\ell_{n}}, \quad \gamma:= \max\biggl(c_{3},\frac{c_4}{2}\biggr) \quad \text{and} \quad \zeta:= \frac{c_{5}}{\ell_n},
\end{equation*}
after a simple calculation it follows that
\begin{equation*}
e^{\int_{0}^{t}(c_{3}+c_{4}\tau+c_{5}e^{L'\tau})d\tau}\leq e^{[\gamma t(1+t)+\zeta e^{L't}]}
\end{equation*}
and
\begin{equation*}
\int_{0}^{t}(c_{1}+c_{2}e^{L's})e^{\int_{s}^{t}(c_{3}+c_{4}\tau+c_{5}e^{L'\tau})d\tau}ds\leq \bigl(\alpha t+\beta e^{L't}\bigr) e^{[\gamma t(1+t)+\zeta e^{L't}]},
\end{equation*}
therefore, combining together the previous three estimates in \eqref{26}, we finally get \eqref{11}.
\end{proof}

\begin{rem}
The previous proposition gives us the needed compactness of the sequence $\displaystyle\{\rho^{n}\}_{n\in \mathbb{N}}$ with respect to space. Concerning the time variable, we are not able to obtain a $L^{1}$ uniform continuity estimate which would provide a sufficient control on the time oscillation. Nevertheless, we are going to prove a uniform time continuity estimate with respect to the $1$-Wasserstein distance which is sufficient to get the required strong $L^{1}$ compactness with respect to space and time.
\end{rem}

We now recall the main properties on the one-dimensional $1$-Wasserstein metric.
Let $\mu$ be a probability measure on $\mathbb{R}$ with finite first moment, and let us denote with $X_{\mu}$ the pseudo-inverse of its cumulative distribution function, that is
\begin{equation*}
X_{\mu}(z):=\inf\{x\in \mathbb{R}:\mu((-\infty,x])>z\} \quad \text{for} \ z\in [0,1].
\end{equation*}
We first notice that $X_\mu \in L^1([0,1])$ (see \cite{villani}). Moreover, the $1$-Wasserstein distance between two probability measures $\mu$ and $\nu$ on $\mathbb{R}$ can be defined as the $L^{1}$ distance of $X_{\mu}$ and $X_{\nu}$, that is
\begin{equation}
\label{17}
W_{1}(\mu,\nu):=||X_{\mu}-X_{\nu}||_{L^{1}([0,1])}.
\end{equation}
In particular, starting from the definition \eqref{10} of $\rho^{n}$, we can explicitly compute the pseudo-inverse function $X_{\rho^{n}}$ and we get
\begin{equation}
\label{18}
X_{\rho^{n}(\cdot,t)}(z)=\sum_{i=0}^{n-1}\bigl[x_{i}(t)+(z-i\ell_{n})R_{i}(t)^{-1}\bigr]\mathbbm{1}_{[i\ell_{n}, (i+1)\ell_{n})}(z).
\end{equation}
After this short preamble, we can prove the uniform time continuity estimate with respect to the $1$-Wasserstein distance, which is stated in the following

\begin{prop}
\label{prop3}
Assume (V), (I) and (P) are satisfied and, moreover, assume (V$^*$) is satisfied in case (P4). If one of (P1), (P2), (P3) or (P4) holds, then there exists a constant $c$, dependent only on $v$ and $\phi$, such that
\begin{equation}
\label{16}
W_{1}\bigl(\rho^{n}(\cdot,t),\rho^{n}(\cdot,s)\bigr)\leq c|t-s| \quad \text{for all}\ t,s> 0.
\end{equation}
\end{prop}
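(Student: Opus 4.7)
The plan is to exploit the explicit pseudo-inverse formula \eqref{18} together with the uniform velocity bound from \eqref{190}. By \eqref{17}, the $1$-Wasserstein distance between $\rho^n(\cdot,t)$ and $\rho^n(\cdot,s)$ equals the $L^1([0,1])$ norm of the difference $X_{\rho^n(\cdot,t)} - X_{\rho^n(\cdot,s)}$, so the goal reduces to controlling this integrand pointwise in $z$ by $c|t-s|$.

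On each interval $[i\ell_n,(i+1)\ell_n)$, formula \eqref{18} and the identity $R_i(\tau)^{-1}=(x_{i+1}(\tau)-x_i(\tau))/\ell_n$ allow us to rewrite
\[
X_{\rho^n(\cdot,\tau)}(z)=(1-u_i)\,x_i(\tau)+u_i\,x_{i+1}(\tau), \qquad u_i:=\frac{z-i\ell_n}{\ell_n}\in[0,1),
\]
so that on the same interval
\[
X_{\rho^n(\cdot,t)}(z)-X_{\rho^n(\cdot,s)}(z)=(1-u_i)\bigl(x_i(t)-x_i(s)\bigr)+u_i\bigl(x_{i+1}(t)-x_{i+1}(s)\bigr).
\]
The pseudo-inverse is therefore a convex combination of particle positions, and taking differences in time gives a corresponding convex combination of the particle displacements.

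Next I would invoke the uniform velocity bound $|\dot{x}_i^n(\tau)|\le L$ from \eqref{190}, which yields $|x_i(t)-x_i(s)|\le L|t-s|$ for every index $i$ and every $t,s\ge 0$ (note this holds in all four cases (P1)--(P4) since \eqref{190} is proved uniformly). Substituting into the previous display and using $(1-u_i)+u_i=1$ gives
\[
\bigl|X_{\rho^n(\cdot,t)}(z)-X_{\rho^n(\cdot,s)}(z)\bigr|\le L|t-s|
\]
for a.e.\ $z\in[0,1]$.

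Integrating this pointwise bound over $[0,1]$ and using \eqref{17} yields $W_1(\rho^n(\cdot,t),\rho^n(\cdot,s))\le L|t-s|$, which is the desired estimate with $c=L=v_{\max}\|\phi\|_{L^\infty(\mathbb{R})}$, a constant depending only on $v$ and $\phi$ as required. There is no real obstacle here: the Wasserstein metric is tailor-made for this kind of Lagrangian estimate, and once one writes the pseudo-inverse as a piecewise affine interpolation of the $x_i(\tau)$ the proof reduces to the a priori Lipschitz-in-time bound on particle trajectories that was already established.
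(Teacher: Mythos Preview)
Your proof is correct and in fact cleaner than the paper's. The paper starts from the same formula \eqref{18} but immediately applies the triangle inequality to split
\[
W_1\bigl(\rho^n(\cdot,t),\rho^n(\cdot,s)\bigr)\le A(s,t)+B(s,t),
\]
where $A$ collects the terms $|x_i(t)-x_i(s)|$ and $B$ the terms $(z-i\ell_n)\bigl|R_i(t)^{-1}-R_i(s)^{-1}\bigr|$. The bound $A\le L|t-s|$ is immediate, but for $B$ the authors write $R_i^{-1}(t)-R_i^{-1}(s)=\int_s^t \dot R_i/R_i^2$, substitute the explicit expression of $\dot R_i$ in each of the four cases (P1)--(P4), and estimate the resulting velocity differences by $L$ or $2L$. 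This yields a final constant $\tfrac{3}{2}L$ in cases (P1)--(P2) and $2L$ in cases (P3)--(P4). Your observation that $X_{\rho^n(\cdot,\tau)}(z)$ is, on each subinterval, a convex combination of $x_i(\tau)$ and $x_{i+1}(\tau)$ bypasses this decomposition entirely: the time increment of the pseudo-inverse is then the same convex combination of the particle increments, and the uniform bound $|\dot x_i|\le L$ from \eqref{190} gives $c=L$ directly, with no case distinction. So your route is shorter, gives a sharper constant, and makes transparent that the estimate is purely Lagrangian and does not depend on the structure of the ODE system.
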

\begin{proof}
Let $0<s<t$ fixed. From \eqref{17}, \eqref{18} and the triangular inequality, it follows that
\begin{equation*}
\begin{split}
W_{1}\bigl(\rho^{n}(\cdot,t),\rho^{n}(\cdot,s)\bigr)= \sum_{i=0}^{n-1}\int_{i\ell_{n}}^{(i+1)\ell_{n}}\bigl|x_{i}(t)-x_{i}(s)+(z-i\ell_{n})\bigl(R_{i}(t)^{-1}-R_{i}(s)^{-1}\bigr)\bigr|dz
\leq   A(s,t)+B(s,t),
\end{split}
\end{equation*}
with
\begin{equation*}
A(s,t):=\sum_{i=0}^{n-1}\int_{i\ell_{n}}^{(i+1)\ell_{n}}|x_{i}(t)-x_{i}(s)|dz \quad \text{and} \quad B(s,t):=\sum_{i=0}^{n-1}\int_{i\ell_{n}}^{(i+1)\ell_{n}}(z-i\ell_{n})\bigl|R_{i}(t)^{-1}-R_{i}(s)^{-1}\bigr|dz.
\end{equation*}
Now we should estimate $A(s,t)$ and $B(s,t)$ separately. Due to \eqref{190}, we have that
\begin{equation*}
A(s,t)=\ell_{n}\sum_{i=0}^{n-1}|x_{i}(t)-x_{i}(s)|=\ell_{n}\sum_{i=0}^{n-1}\biggr|\int_{s}^{t}\dot{x}_{i}(\tau)d\tau\biggr|\leq \ell_{n}\sum_{i=0}^{n-1}\int_{s}^{t}|\dot{x}_{i}(\tau)|d\tau\leq L(t-s),
\end{equation*}
while, turning to $B(s,t)$, we first notice that
\begin{equation*}
\begin{split}
B(s,t)=\sum_{i=0}^{n-1}\bigl|R_{i}(t)^{-1}-R_{i}(s)^{-1}\bigr|\int_{i\ell_{n}}^{(i+1)\ell_{n}}(z-i\ell_{n})dz&=\frac{\ell_{n}^{2}}{2}\sum_{i=0}^{n-1}\biggl|\int_{s}^{t}\frac{d}{d\tau} R_{i}(\tau)^{-1}d\tau\biggr|\leq\frac{\ell_{n}^{2}}{2}\sum_{i=0}^{n-1}\int_{s}^{t}\frac{\bigl|\dot{R}_{i}(\tau)\bigr|}{R_{i}(\tau)^{2}}d\tau.
\end{split}
\end{equation*}
For clarity we  should treat the four cases separately from now on. In case (P1), substituting the definition of $\dot{R}_{i}(t)$ we get
\begin{equation*}
\begin{split}
B(s,t)\leq & \ \frac{\ell_{n}}{2} \sum_{i=0}^{n-2}\int_{s}^{t}\bigl|v({R}_{i+1}(\tau))\phi(x_{i+1}(\tau))-v({R}_{i}(\tau))\phi(x_{i}(\tau))\bigr|d\tau
\\
&+\frac{\ell_{n}}{2}\int_{s}^{t}\bigl|v_{\max}\phi(x_{n}(\tau))-v({R}_{n-1}(\tau))\phi(x_{n-1}(\tau))\bigr|d\tau
\end{split}
\end{equation*}
where, for all $\tau\geq0$ and $i\in \{0,\dots,n-2\}$, it holds that
\begin{equation*}
\bigl|v({R}_{i+1}(\tau))\phi(x_{i+1}(\tau))-v({R}_{i}(\tau))\phi(x_{i}(\tau))\bigr|\leq\max\{v({R}_{i+1}(\tau))\phi(x_{i+1}(\tau)),v({R}_{i}(\tau))\phi(x_{i}(\tau))\}\leq L
\end{equation*}
and analogously that
\begin{equation*}
\bigl|v_{\max}\phi(x_{n}(\tau))-v({R}_{n-1}(\tau))\phi(x_{n-1}(\tau))\bigr|\leq L.
\end{equation*}
This implies that $\displaystyle B(s,t) \leq \frac{L}{2} (t-s)$ and hence that $\displaystyle W_{1}\bigl(\rho^{n}(\cdot,t),\rho^{n}(\cdot,s)\bigr)\leq \frac{3}{2} L(t-s)$,
which concludes the proof of \eqref{16} in case (P1), since the calculation is still valid interchanging $s$ and $t$.

In case (P2) we can reason in a symmetric way with respect to the previous case and we get the same estimate for $B(s,t)$. The details are left to the reader.

Turning to case (P3), we remark that $\dot{R}_{i}(\tau)$ has the same expression of cases (P2) and (P1) for $i\in\{0,\dots,k_{n}-1\}$ and $i\in\{k_{n}+1,\dots,n-1\}$ respectively, while
\begin{equation*}
\dot{R}_{k_{n}}(\tau)=-\frac{{R}_{k_{n}}(\tau)^{2}}{\ell_{n}}\bigl[v({R}_{k_{n}+1}(\tau))\phi(x_{k_{n}+1}(\tau))-v({R}_{k_{n}-1}(\tau))\phi(x_{k_{n}}(\tau))\bigr].
\end{equation*}
Since
\begin{equation*}
\bigl|v({R}_{k_{n}+1}(\tau))\phi(x_{k_{n}+1}(\tau))-v({R}_{k_{n}-1}(\tau))\phi(x_{k_{n}}(\tau))\bigr|\leq 2L,
\end{equation*}
then it follows that $B(s,t)\leq L(t-s)$ and hence also in this case \eqref{16} holds.

In the remaining case (P4), we have that $\dot{R}_{i}(\tau)$ has the same expression of cases (P1) and (P2) for $i\in\{0,\dots,k_{n}-1\}$ and $i\in\{k_{n}+1,\dots,n-1\}$ respectively, while
\begin{equation*}
\dot{R}_{k_{n}}(\tau)=-\frac{{R}_{k_{n}}(\tau)^{2}}{\ell_{n}}v({R}_{k_{n}}(\tau))\bigl[\phi(x_{k_{n}+1}(\tau))-\phi(x_{k_{n}}(\tau))\bigr].
\end{equation*}
Since
\begin{equation*}
v({R}_{k_{n}}(\tau))\bigl|\phi(x_{k_{n}+1}(\tau))-\phi(x_{k_{n}}(\tau))\bigr|\leq 2L,
\end{equation*}
then $B(s,t)$ satisfies the same inequality seen in case (P3) and hence the validity of \eqref{16} is proved also in case (P4).
\end{proof}

Before passing to the main result of this paper, we recall a generalised version of Aubin-Lions lemma (see \cite{RosSav},\cite{DFR}, \cite{DFRF}) which has a key role in the sequel.
\begin{thm}
\label{teo2}
Let $T>0$ fixed, $I\subset \mathbb{R}$ a bounded open interval (possibly depending on $T$), $\displaystyle\{\mu^{n}\}_{n\in\mathbb{N}}$ a sequence in $L^{\infty}((0,T);L^{1}(\mathbb{R}))$ such that $\mu^{n}(\cdot,t)\geq 0$ and $||\mu^{n}(\cdot,t)||_{L^{1}(\mathbb{R})}=1$ for all $n\in \mathbb{N}$ and $t\in [0,T]$. If
\begin{itemize}
\item[(A)] $\supp [\mu^{n}(\cdot,t)]\subseteq I$ for all $n\in \mathbb{N}$ and $t\in [0,T]$,
\item[(B)] $\displaystyle \sup_{n\in \mathbb{N}}\int_{0}^{T}\biggl[||\mu^{n}(\cdot,t)||_{L^{1}(I)}+\TV[\mu^{n}(\cdot,t);I]\biggr]dt<\infty$,
\item[(C)] There exists a constant $c$ independent on $n$ such that $W_{1}\bigl(\mu^{n}(\cdot,t),\mu^{n}(\cdot,s)\bigr)\leq c|t-s|$ for all $s,t\in (0,T)$,
\end{itemize}
then $\{\mu^{n}\}_{n\in\mathbb{N}}$ is strongly relatively compact in $L^{1}\bigl(\mathbb{R}\times [0,T]\bigr)$.
\end{thm}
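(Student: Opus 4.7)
\emph{Proof plan.} The strategy combines spatial compactness via Helly's theorem, using the integral $BV$ bound (B), with time equicontinuity in the $1$-Wasserstein metric, using (A) and (C). First I would extract a subsequence converging uniformly in $t$ with respect to $W_{1}$, then upgrade this to strong $L^{1}$ convergence at a.e.\ time slice, and finally conclude by dominated convergence in the time variable.

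For the first step, (A) together with $\|\mu^{n}(\cdot,t)\|_{L^{1}(\mathbb{R})}=1$ forces each $\mu^{n}(\cdot,t)$ into the space of probability measures supported in the closure $\overline{I}$, which is compact in the $W_{1}$-topology (boundedness of $\overline{I}$ yields both tightness and uniform integrability of first moments). Combined with the uniform Lipschitz-in-$t$ estimate (C), the family $\{\mu^{n}\}$ is equi-bounded and equi-continuous as a family of maps from $[0,T]$ into this compact metric space, so Arzel\`a--Ascoli delivers a (non-relabelled) subsequence and a limit $\mu$ with $\sup_{t\in[0,T]} W_{1}(\mu^{n}(\cdot,t),\mu(\cdot,t))\to 0$.

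For the second step, by Fatou's lemma applied to (B) one has $\liminf_{n} \TV[\mu^{n}(\cdot,t);I]<\infty$ for a.e.\ $t\in(0,T)$. Fix such a $t$. If $\|\mu^{n_{k}}(\cdot,t)-\mu(\cdot,t)\|_{L^{1}(I)}\geq\epsilon$ held along some subsequence, the liminf above would remain finite along $n_{k}$, so a further sub-subsequence would be uniformly bounded in $BV(I)$; Helly's selection theorem would then extract an $L^{1}(I)$-convergent sub-sub-subsequence, whose limit must coincide with the $W_{1}$-limit $\mu(\cdot,t)$ by uniqueness, since $L^{1}$-convergence of densities supported in the bounded set $\overline{I}$ implies $W_{1}$-convergence. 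This contradicts the standing gap $\epsilon$. The standard ``every subsequence has a convergent sub-subsequence with the same limit'' argument then gives $\mu^{n}(\cdot,t)\to\mu(\cdot,t)$ in $L^{1}(I)$ for a.e.\ $t$. Finally, since $\|\mu^{n}(\cdot,t)-\mu(\cdot,t)\|_{L^{1}(\mathbb{R})}\leq 2$ uniformly in $n$ and $t$, dominated convergence in $t$ together with Fubini yields $\|\mu^{n}-\mu\|_{L^{1}(\mathbb{R}\times[0,T])}\to 0$.

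The main obstacle is the $W_{1}$-to-$L^{1}$ upgrade in the second step: the integral bound (B) does not translate into a uniform-in-$n$ pointwise $BV$ bound, so Helly's mechanism is only available along sub-subsequences a priori depending on $t$. The subsequence-of-subsequence trick combined with uniqueness of the $W_{1}$-limit is what propagates Helly's compactness back to the whole sequence pointwise a.e.\ in $t$, after which the uniform bound $\|\mu^{n}(\cdot,t)-\mu(\cdot,t)\|_{L^{1}}\leq 2$ makes the passage to $L^{1}(\mathbb{R}\times[0,T])$ routine.
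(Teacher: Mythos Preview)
The paper does not actually prove this theorem; it is quoted as a known compactness result with references to the literature, so there is no in-paper argument to compare against. Assessing your proof on its own: Step~1 (Arzel\`a--Ascoli in the compact metric space $(\mathcal{P}(\overline{I}),W_1)$) is correct. Step~2, however, contains a real gap. From (B) and Fatou you rightly obtain $\liminf_n \TV[\mu^n(\cdot,t)]<\infty$ for a.e.\ $t$, but the claim that ``the liminf above would remain finite along $n_k$'' is false in general: the $\liminf$ along a subsequence dominates the $\liminf$ along the full sequence, so finiteness is \emph{not} inherited by subsequences. Your subsequence-of-subsequence device in fact requires that every subsequence of $\bigl(\TV[\mu^n(\cdot,t)]\bigr)_n$ admit a bounded further subsequence, which is equivalent to $\sup_n \TV[\mu^n(\cdot,t)]<\infty$ --- and (B) does not give this pointwise (a typewriter configuration $\TV[\mu^n(\cdot,t)]=n\,\mathbbm{1}_{E_n}(t)$ with $|E_n|=1/n$ and the $E_n$ sweeping through $[0,T]$ satisfies (B) while $\sup_n\TV[\mu^n(\cdot,t)]=+\infty$ for every $t$). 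Hence the contradiction does not close, and pointwise-a.e.\ $L^1$ convergence is not established.

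A direct repair bypasses the pointwise upgrade. Your Helly argument along a subsequence realising the finite $\liminf$ does show that $\mu(\cdot,t)\in BV(I)$ with $\TV[\mu(\cdot,t)]\leq \liminf_n\TV[\mu^n(\cdot,t)]$ for a.e.\ $t$, whence $\int_0^T\TV[\mu(\cdot,t)]\,dt<\infty$ by Fatou. Now invoke the interpolation
\[
\|\mu-\nu\|_{L^1(I)}\leq C\bigl(\TV[\mu]+\TV[\nu]\bigr)^{1/2}W_1(\mu,\nu)^{1/2}
\]
for probability densities on a bounded interval (the same inequality the paper itself uses near the end of the proof of its main theorem). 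Integrating in $t$ and applying Cauchy--Schwarz, the $\TV$ factor is uniformly controlled by (B) and the bound just derived, while the $W_1$ factor tends to zero uniformly in $t$ by your Step~1; this yields $\|\mu^n-\mu\|_{L^1(\mathbb{R}\times[0,T])}\to 0$ without any pointwise $L^1$ argument.
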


\section{Proof of the main result}\label{sec:consistency}

Now we are ready to prove the main result of the paper. For clarity, in the sequel we drop the time dependence whenever it is clear from the context. We recall our notation $f(\rho)=\rho v(\rho)$. Moreover, we recall that by weak solution to \eqref{sis2} we mean a distributional solution to the PDE in \eqref{sis2} in $L^\infty(\R\times [0,+\infty))$. 

We now introduce our concept of entropy solutions for \eqref{sis2}, which is an adapted version of the one in \cite[Definition 2.1]{KarlsenRisebro}
\begin{definition}\label{definition_entropy_solution}
  Let $\overline{\rho}\in BV(\R)$. A function $\rho:L^\infty([0,+\infty)\,;\,BV(\R))$ is an \emph{entropy solution} to \eqref{sis2} if
  \begin{equation}
\label{def_entropy}
\begin{split}
\int_{0}^T \int_{\mathbb{R}}\biggl[&|\rho(x,t)-k|\varphi_{t}(x,t)+\sign(\rho(x,t)-k)\bigl(f(\rho(x,t))-f(k)\bigr)\phi(x)\varphi_{x}(x,t)
\\
&-\sign(\rho(x,t)-k)f(k)\phi'(x)\varphi(x,t)\biggr]dxdt\geq 0,
\end{split}
\end{equation}
for all $k \geq 0$ and for all $\varphi\in C_c(\R\times (0,+\infty))$ and if $\rho(\cdot,t)\rightarrow \overline{\rho}$ strongly in $L^1$ as $t\searrow 0$.
\end{definition}

We now recall an adapted version of the $L^{1}$ contraction property proved by Karlsen and Risebro in \cite[Theorem 1.2]{KarlsenRisebro}, which will be crucial in the proof of the uniqueness of the entropy solution to \eqref{sis2}.
\begin{thm}
\label{thmKR}
Let $T>0$ fixed arbitrarily, let $f$ a locally Lipschitz function on $\mathbb{R}$, let $\psi\in W_{loc}^{1,1}(\mathbb{R})\cap C(\mathbb{R})$ such that $\psi, \psi' \in L^{\infty}(\mathbb{R})$ and consider the problem
\begin{equation}
\label{295}
\begin{cases}
     w_{t} + \bigl(f(w) \psi(x)\bigr)_{x} =0,
    \qquad  & x\in \mathbb{R},\, t\in (0,T),
    \\ w(x,0)=\overline{w}(x),
    \qquad  & x\in \mathbb{R}.
   \end{cases}
\end{equation}
If $u,v\in L^{\infty}((0,T);BV(\mathbb{R}))$ are two entropy solutions of \eqref{295} with respective initial datum $u_{0}$ and $v_{0}$ in the sense of Definition 2.1 in \cite{KarlsenRisebro}, both in $L^{1}(\mathbb{R})\cap L^{\infty}(\mathbb{R})\cap BV(\mathbb{R})$, then, for almost every $t\in (0,T)$, it holds
\begin{equation*}
||u(\cdot,t)-v(\cdot,t)||_{L^{1}(\mathbb{R})}\leq ||u_{0}-v_{0}||_{L^{1}(\mathbb{R})}.
\end{equation*}
In particular, this implies the existence of at most one entropy solution of \eqref{295}.
\end{thm}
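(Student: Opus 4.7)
The plan is to prove the $L^1$ contraction by Kruzkov's doubling of variables method, adapted to the space-dependent flux $f(w)\psi(x)$; uniqueness then follows at once by setting $u_0=v_0$.

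First, one doubles the independent variables: consider $u=u(x,t)$ and $v=v(y,s)$ as entropy solutions in separate pairs of variables, and pick the nonnegative test function
\[\Phi_\epsilon(x,t,y,s):=\varphi(x,t)\,\omega_\epsilon(x-y)\,\omega_\epsilon(t-s),\]
where $\omega_\epsilon$ is a symmetric mollifier and $\varphi\in C_c^\infty(\mathbb{R}\times(0,T))$ is nonnegative. Apply the entropy inequality (in the KR sense) for $u$ with the constant $k=v(y,s)$, using $\Phi_\epsilon$ in the $(x,t)$--variables and integrating in $(y,s)$; symmetrically apply it for $v$ with $k=u(x,t)$, integrating in $(x,t)$. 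Setting $Q(u,v):=\sign(u-v)(f(u)-f(v))$ and using $\sign(v-u)=-\sign(u-v)$ together with $|v-u|=|u-v|$, summing the two inequalities yields
\[\iiiint\Bigl[|u-v|(\Phi_{\epsilon,t}+\Phi_{\epsilon,s})+Q(u,v)\bigl(\psi(x)\Phi_{\epsilon,x}+\psi(y)\Phi_{\epsilon,y}\bigr)+\sign(u-v)\bigl(f(u)\psi'(y)-f(v)\psi'(x)\bigr)\Phi_\epsilon\Bigr]\geq 0.\]

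The critical step is the limit $\epsilon\to 0$. The time derivative part collapses to $|u-v|\varphi_t$ on the diagonal by standard arguments. For the spatial contribution, write
\[\psi(x)\Phi_{\epsilon,x}+\psi(y)\Phi_{\epsilon,y}=\psi(x)\varphi_x\,\omega_\epsilon(x-y)\omega_\epsilon(t-s)+(\psi(x)-\psi(y))\varphi\,\omega_\epsilon'(x-y)\omega_\epsilon(t-s).\]
The first piece converges to $\psi(x)\varphi_x$ on the diagonal. Integrating the second piece by parts in $y$ (equivalently, using the rescaling $z=(x-y)/\epsilon$ together with the elementary identity $\int z\,\omega'(z)\,dz=-1$) shows that it converges to $-\psi'(x)\varphi$ on the diagonal, thus contributing the extra piece $-Q(u,v)\psi'(x)\varphi$. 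The key observation is that this negative contribution \emph{exactly cancels} the diagonal limit of the source term $\sign(u-v)(f(u)\psi'(y)-f(v)\psi'(x))\Phi_\epsilon \to Q(u,v)\psi'(x)\varphi$. What survives is the clean Kruzkov-type inequality
\[\int_0^T\!\!\int_{\mathbb{R}}\bigl[|u-v|\varphi_t+Q(u,v)\psi(x)\varphi_x\bigr]\,dx\,dt\geq 0,\]
valid for every nonnegative $\varphi\in C_c^\infty(\mathbb{R}\times(0,T))$.

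The main obstacle, in my view, is rigorously justifying this cancellation when $\psi'$ is only in $L^\infty$ and not necessarily continuous. One handles it by working at Lebesgue points of $\psi'$, or equivalently by first mollifying $\psi$, carrying out the derivation, and then removing the mollification via dominated convergence together with the uniform $L^\infty$ bound on $u,v$. Once the Kruzkov-type inequality is in hand, the $L^1$ contraction is standard: choose $\varphi(x,t)=\alpha_h(t)\chi_R(x)$, where $\alpha_h$ smoothly approximates $\mathbbm{1}_{[0,t_0]}$ (so that $\alpha_h'$ approximates $\delta_0-\delta_{t_0}$) and $\chi_R\in C_c^\infty(\mathbb{R})$ is a spatial cutoff equal to $1$ on $[-R,R]$. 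Sending $h\to 0$ converts the time term into $\int\chi_R(x)\bigl(|u_0-v_0|-|u(\cdot,t_0)-v(\cdot,t_0)|\bigr)dx$, while sending $R\to\infty$ kills the remaining flux term because $|Q(u,v)|\leq \Lip_f|u-v|$, $\psi$ is bounded, and $u(\cdot,t)-v(\cdot,t)\in L^1(\mathbb{R})$ uniformly in $t\in[0,T]$ (inherited from $u,v\in L^\infty(0,T;L^1)$). This delivers $\|u(\cdot,t_0)-v(\cdot,t_0)\|_{L^1(\mathbb{R})}\leq\|u_0-v_0\|_{L^1(\mathbb{R})}$ as claimed, and uniqueness follows by setting $u_0=v_0$.
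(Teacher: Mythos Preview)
The paper does not prove this theorem: it is stated as a recalled result, an adapted version of \cite[Theorem 1.2]{KarlsenRisebro}, and no proof is given in the paper itself. Your sketch via Kruzkov's doubling of variables is precisely the approach taken in the cited reference, and the outline is essentially correct, including the key cancellation between the $-Q(u,v)\psi'(x)\varphi$ term arising from $(\psi(x)-\psi(y))\omega_\epsilon'(x-y)$ and the diagonal limit of the source term.

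Two small points worth tightening. First, test functions in the entropy inequality are compactly supported in $\mathbb{R}\times(0,T)$, so $\alpha_h$ cannot directly approximate $\mathbbm{1}_{[0,t_0]}$; one approximates $\mathbbm{1}_{[\tau,t_0]}$ for small $\tau>0$ and then sends $\tau\to 0$ using the assumed strong $L^1$ continuity at $t=0$. Second, your justification for killing the flux term as $R\to\infty$ appeals to $u,v\in L^\infty(0,T;L^1)$, but the hypothesis only gives $L^\infty(0,T;BV)$; the $L^1$ control on $u(\cdot,t)-v(\cdot,t)$ has to be obtained either from an a priori $L^1$ bound on entropy solutions with $L^1$ data or from a finite-speed-of-propagation cone argument, both of which are standard in this setting.
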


We now state out main result.
\begin{thm}\label{thm:main}
Let $T>0$ fixed arbitrarily. Assume (V), (I) and (P) are satisfied and, moreover, assume (V$^*$) is satisfied in case (P4). If one of (P1), (P2), (P3) or (P4) holds, then the approximated density $\{\rho^{n}\}_{n\in\mathbb{N}}$ defined in \eqref{10} converges, up to a subsequence, almost everywhere and in $L^{1}$ on $\mathbb{R}\times [0,T]$ to the unique entropy solution to the Cauchy problem \eqref{sis2} in the sense of Definition \ref{definition_entropy_solution}.
\end{thm}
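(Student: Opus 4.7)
The strategy is the standard \emph{compactness + consistency + uniqueness} scheme adapted to the follow-the-leader approximation. First, I would invoke the generalized Aubin-Lions Theorem \ref{teo2} to extract a strongly convergent subsequence of $\{\rho^n\}$. Hypothesis (A) follows from Lemma \ref{lem7} combined with the uniform velocity bound \eqref{190}: the support of $\rho^n(\cdot,t)$ stays inside a fixed bounded interval $I=I(T)$ for $t\in[0,T]$. Hypothesis (B) is an immediate consequence of mass conservation $\|\rho^n(\cdot,t)\|_{L^1}=1$ together with the BV estimate \eqref{11} of Proposition \ref{prop2}, which gives a uniform-in-$n$ bound for $\TV[\rho^n(\cdot,t)]$ on $[0,T]$. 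Hypothesis (C) is Proposition \ref{prop3}. Hence, up to a subsequence, $\rho^n\to\rho$ strongly in $L^1(\mathbb{R}\times[0,T])$ and almost everywhere, with $\rho\in L^\infty([0,T];BV(\mathbb{R}))\cap L^\infty(\mathbb{R}\times[0,T])$ thanks to the discrete maximum principle of Proposition \ref{prop1}.

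Next I would establish that the limit $\rho$ is an entropy solution in the sense of Definition \ref{definition_entropy_solution}. For a fixed $k\geq 0$ and $\varphi\in C_c^\infty(\mathbb{R}\times(0,T))$ non-negative, the plan is to compute $\frac{d}{dt}\sum_{i=0}^{n-1}\eta_\sigma(R_i-k)\int_{x_i}^{x_{i+1}}\varphi(x,t)\,dx$ (with $\eta_\sigma$ a smooth approximation of the absolute value as in Proposition \ref{prop2}) and rearrange by discrete summation by parts. The point is that, in each of the four cases, the structure of \eqref{1}--\eqref{4} ensures the monotonicity of $v$ combines with the upwind choice of the discrete density to produce, term by term, a discrete analogue of the Kru\v{z}kov entropy flux $\sign(\rho-k)(f(\rho)-f(k))\phi(x)$ plus a source term reproducing $-\sign(\rho-k)f(k)\phi'(x)$ via a Taylor expansion of $\phi$ between consecutive particles (compare with the Taylor expansions used in the proof of Proposition \ref{prop2}). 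The residual remainders include $O(\ell_n)$ terms coming from second order Taylor remainders, $O(\sigma)$ terms coming from the smoothing $\eta_\sigma$ of the absolute value, and boundary terms at the two extremal particles; sending first $n\to+\infty$ (using the strong $L^1$ convergence $\rho^n\to\rho$, uniform BV and $L^\infty$ bounds, and the continuity of $f$, $\phi$, $\phi'$) and then $\sigma\to 0^+$ yields \eqref{def_entropy}.

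To complete the definition I would recover the initial condition $\rho(\cdot,t)\to\bar{\rho}$ in $L^1$ as $t\searrow 0$. By Proposition \ref{prop3}, $\{\rho^n(\cdot,t)\}_{t\geq 0}$ is uniformly $W_1$-equicontinuous; passing to the limit in $n$, the limit $\rho$ is $W_1$-continuous in time into the space of probability measures, and $\rho^n(\cdot,0)\to\bar{\rho}$ in $L^1$ by construction of the atomization \eqref{5}. Since $\rho(\cdot,t)\in BV(\mathbb{R})$ uniformly in $t$, the combination of $W_1$-continuity and uniform BV/$L^\infty$ bounds upgrades weak convergence as $t\searrow 0$ to strong $L^1$ convergence by standard interpolation. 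Finally, uniqueness of the entropy solution is provided by Theorem \ref{thmKR} (the Karlsen-Risebro $L^1$ contraction applied with $\psi=\phi$), which also implies that the whole sequence $\{\rho^n\}$, not only a subsequence, converges to $\rho$.

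The main technical obstacle is the consistency step: proving the discrete Kru\v{z}kov entropy inequality and controlling all remainder terms uniformly in $n$, particularly in cases (P3) and (P4) where the index $k_n$ introduces a change of upwind direction. At $i=k_n$ the cancellations leading to the discrete entropy flux are not immediate and must be handled via the sign properties of $\phi$ guaranteed by (P3) or (P4) and the preservation of the particles' sign established in Proposition 2.3. The other cases reduce, after suitable reindexing, to the argument already employed for (P1)--(P2).
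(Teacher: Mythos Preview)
Your proposal is correct and follows essentially the same route as the paper: compactness via the generalized Aubin--Lions lemma (hypotheses checked exactly as you describe), a discrete Kru\v{z}kov inequality obtained by integration by parts in time plus Taylor expansion of $\phi$ between consecutive particles with the index $i=k_n$ treated separately in cases (P3)--(P4), the initial condition recovered through $W_1$--$BV$ interpolation, and uniqueness via Karlsen--Risebro. The only noteworthy technical deviation is that the paper works directly with $|R_i-k|$ and $\sign(R_i-k)$ in the consistency step (legitimate since each $R_i$ depends only on $t$) and introduces one-sided smooth approximations $\eta_\varepsilon^{\pm}$ of $\sign$ only at the very end to pass to the limit in the term $\sign(\rho^n-k)f(k)\phi'\varphi$, whereas your use of a smooth entropy $\eta_\sigma$ throughout and the order of limits $n\to\infty$ then $\sigma\to 0$ achieves the same goal and arguably handles that discontinuity more uniformly.
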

\begin{proof}
We first show that $\{\rho^{n}\}_{n\in\mathbb{N}}$ converges, up to a subsequence, almost everywhere and in $L^{1}$ on $\mathbb{R}\times [0,T]$. We notice that the support of $\rho^{n}(\cdot,t)$ is contained, for every $n\in \mathbb{N}$ and $t\in [0,T]$, in the closed interval
\begin{equation*}
J:=[a,b]=\begin{cases} \bigl[\overline{x}_{\min},\overline{x}_{\max}+LT\bigr] & \text{in case (P1)},
\\ \bigl[\overline{x}_{\min}-LT,\overline{x}_{\max}\bigr] & \text{in case (P2)},
\\ \bigl[\overline{x}_{\min}-LT,\overline{x}_{\max}+LT\bigr] & \text{in case (P3)},
\\ \bigl[\overline{x}_{\min},\overline{x}_{\max}\bigr] & \text{in case (P4)}.
\end{cases}
\end{equation*}
Therefore, taking as $I$ any open interval of the type $(a-c,b+d)$ with $c$ and $d$ arbitrary positive constants, we can apply theorem \ref{teo2}, indeed assumption (A) is valid due to the above construction, while assumptions (B) and (C) are a direct consequence of propositions \ref{prop2} and \ref{prop3} respectively. As a result, it follows that $\{\rho^{n}\}_{n\in\mathbb{N}}$ converges, up to a subsequence that we still denote in the sequel with $\{\rho^{n}\}_{n\in\mathbb{N}}$, almost everywhere and in $L^{1}$ on $\mathbb{R}\times[0,T]$ to a certain function $\rho$.

Now we show that $\{\rho^{n}\}_{n\in\mathbb{N}}$ satisfies, for every $k\geq0$ and every non-negative $\varphi\in C_{c}^{\infty}(\mathbb{R}\times (0,T))$,
\begin{equation}
\label{48}
\begin{split}
\liminf_{n\rightarrow + \infty}\int_{0}^T \int_{\mathbb{R}}\biggl[&|\rho^{n}(x,t)-k|\varphi_{t}(x,t)+\sign(\rho^{n}(x,t)-k)\bigl(f(\rho^{n}(x,t))-f(k)\bigr)\phi(x)\varphi_{x}(x,t)
\\
&-\sign(\rho^{n}(x,t)-k)f(k)\phi'(x)\varphi(x,t)\biggr]dxdt\geq 0,
\end{split}
\end{equation}
where we denote $f(\eta):=\eta v(\eta)$. Let us omit from now on also the $x$ dependence whenever it is clear from the context. We first remark that, since $\supp[\varphi]$ is compact in $\mathbb{R} \times (0,T)$, then it holds
\begin{equation}
\label{140}
\begin{split}
\int_{0}^T\int_{\mathbb{R}}\bigl[|\rho^{n}-k|\varphi_{t}+\sign(\rho^{n}-k)(f(\rho)-f(k))\phi\varphi_{x}-\sign(\rho^{n}-k)f(k)\phi'\varphi\bigr]dxdt =& A+B+C,
\end{split}
\end{equation}
where
\begin{equation*}
A:= \sum_{i=0}^{n-1}\int_{0}^{T}\int_{x_{i}}^{x_{i+1}}\bigl[|R_{i}-k|\varphi_{t}+\sign(R_{i}-k)(f(R_{i})-f(k))\phi\varphi_{x}-\sign(R_{i}-k)f(k)\phi'\varphi\bigr]dxdt,
\end{equation*}
\begin{equation*}
B:= \int_{0}^{T}\int_{-\infty}^{x_{0}}\bigl[k\varphi_{t}+f(k)\phi\varphi_{x}+f(k)\phi'\varphi\bigr]dxdt \quad \text{and} \quad
C:=\int_{0}^{T}\int_{x_{n}}^{+\infty}\bigl[k\varphi_{t}+f(k)\phi\varphi_{x}+f(k)\phi'\varphi\bigr]dxdt.
\end{equation*}
Recalling that $\displaystyle\frac{d}{dt}\int_{\alpha(t)}^{\beta(t)} g(x,t)dx = \int_{\alpha(t)}^{\beta(t)} g_t(x,t) dx + g(\beta(t),t)\dot{\beta}(t) - g(\alpha(t),t)\dot{\alpha}(t)$, it follows that
\begin{equation}
\begin{split}
B=k\int_{0}^{T}\biggl(\int_{-\infty}^{x_{0}}\varphi_{t}dx\biggr)dt+kv(k)\int_{0}^{T}\int_{-\infty}^{x_{0}}(\phi\varphi)_{x}dxdt = k\int_{0}^{T}\bigl(v(k)\phi(x_{0})-\dot{x}_{0}\bigr)\varphi(x_{0})dt
\end{split}
\end{equation}
and analogously that
\begin{equation}
C=k\int_{0}^{T}\biggl(\int_{x_{n}}^{+\infty}\varphi_{t}dx\biggr)dt+kv(k)\int_{0}^{T}\int_{x_{n}}^{+\infty}(\phi\varphi)_{x}dxdt= k\int_{0}^{T}\bigl(\dot{x}_{n}-v(k)\phi(x_{n})\bigr)\varphi(x_{n})dt,
\end{equation}
while we can rewrite $A$ as
\begin{equation}
\!\!A= \sum_{i=0}^{n-1}\int_{0}^{T}\!\!\!\int_{x_{i}}^{x_{i+1}}\!\!\bigl[|R_{i}-k|\varphi_{t}+\sign(R_{i}-k)f(R_{i})\phi\varphi_{x}-\sign(R_{i}-k)f(k)(\phi\varphi)_{x}\bigr]dxdt = A_{1}+A_{2}+A_{3},
\end{equation}
where
\begin{equation*}
A_{1}:= \sum_{i=0}^{n-1}\int_{0}^{T}|R_{i}-k|\biggl(\int_{x_{i}}^{x_{i+1}}\varphi_{t}dx\biggr)dt, \quad A_{2}:= \sum_{i=0}^{n-1}\int_{0}^{T}\sign(R_{i}-k)f(R_{i})\biggl(\int_{x_{i}}^{x_{i+1}}\phi\varphi_{x}dx\biggr)dt
\end{equation*}
and
\begin{equation*}
A_{3}:= - \sum_{i=0}^{n-1}\int_{0}^{T}\sign(R_{i}-k)f(k)\bigl[\phi(x_{i+1})\varphi(x_{i+1})-\phi(x_{i})\varphi(x_{i})\bigr]dt.
\end{equation*}
Integrating by parts and since $\supp[\varphi(x,\cdot)]\subseteq(0,T)$ for every $x\in\mathbb{R}$, we get that $A_{1}$ satisfies
\begin{equation}
\label{141}
\begin{split}
A_{1}=&\sum_{i=0}^{n-1}\int_{0}^{T}|R_{i}-k|\biggl(\frac{d}{dt}\int_{x_{i}}^{x_{i+1}}\varphi dx\biggr)dt-\sum_{i=0}^{n-1}\int_{0}^{T}|R_{i}-k|\dot{x}_{i+1}\varphi(x_{i+1})dt+ \sum_{i=0}^{n-1}\int_{0}^{T}|R_{i}-k|\dot{x}_{i}\varphi(x_{i})dt
\\
= & -\sum_{i=0}^{n-1}\int_{0}^{T}\sign(R_{i}-k)(R_{i}-k)\dot{x}_{i+1}\varphi(x_{i+1})dt -\sum_{i=0}^{n-1}\int_{0}^{T}\sign(R_{i}-k)\dot{R}_{i}\biggl(\int_{x_{i}}^{x_{i+1}}\varphi dx\biggr)dt
\\
& +\sum_{i=0}^{n-1}\int_{0}^{T}\sign(R_{i}-k)(R_{i}-k)\dot{x}_{i}\varphi(x_{i})dt,
\end{split}
\end{equation}
while $A_{2}$ has a different expression in the four cases, since we need to approximate the function $\phi$ differently according to its sign. In particular we have:
\begin{itemize}
\item[(P1)] For case (P1), using the first order Taylor's expansion  of $\phi$ at $x_{i}$ in the interval $(x_{i},x_{i+1})$, which is given, for all $x\in (x_{i},x_{i+1})$, by
\begin{equation}
\label{28}
\phi(x)=\phi(x_{i})+\phi'(\tilde{x}_{i,i+1})(x-x_{i}) \quad \text{for some}\ \ \tilde{x}_{i,i+1} \in (x_{i},x_{i+1}),
\end{equation}
we can rewrite $A_{2}$ as
\begin{equation}
\label{142}
\begin{split}
A_{2}=& \sum_{i=0}^{n-1}\int_{0}^{T}\sign(R_{i}-k)R_{i}v(R_{i})\phi(x_{i})\bigl(\varphi(x_{i+1})-\varphi(x_{i})\bigr)dt
\\
&+ \sum_{i=0}^{n-1}\int_{0}^{T}\sign(R_{i}-k)R_{i}v(R_{i})\biggl(\int_{x_{i}}^{x_{i+1}}\phi'(\tilde{x}_{i,i+1})(x-x_{i})\varphi_{x}dx\biggr)dt.
\end{split}
\end{equation}
\item[(P2)] In case (P2), we use instead the first order Taylor's expansions of $\phi$ at $x_{i+i}$, that is
\begin{equation}
\label{29}
\phi(x)=\phi(x_{i+1})+\phi'(\tilde{y}_{i,i+1})(x-x_{i+1}) \quad \text{for some}\ \tilde{y}_{i,i+1} \in (x_{i},x_{i+1})
\end{equation}
and in this way we get that
\begin{equation*}
\begin{split}
A_{2}=& \sum_{i=0}^{n-1}\int_{0}^{T}\sign(R_{i}-k)R_{i}v(R_{i})\phi(x_{i+1})\bigl(\varphi(x_{i+1})-\varphi(x_{i})\bigr)dt
\\
&+ \sum_{i=0}^{n-1}\int_{0}^{T}\sign(R_{i}-k)R_{i}v(R_{i})\biggl(\int_{x_{i}}^{x_{i+1}}\phi'(\tilde{y}_{i,i+1})(x-x_{i+1})\varphi_{x}dx\biggr)dt.
\end{split}
\end{equation*}
\item[(P3)] For case (P3), we use the Taylor's expansions \eqref{28} and \eqref{29} of $\phi$ respectively for $i\in\{k_{n}+1,\dots,n-1\}$ and $i\in\{0,\dots,k_{n}-1\}$, in order to have that
\begin{equation}
\label{144}
\begin{split}
A_{2}=& \sum_{i=0}^{k_{n}-1}\int_{0}^{T}\sign(R_{i}-k)R_{i}v(R_{i})\phi(x_{i+1})\bigl(\varphi(x_{i+1})-\varphi(x_{i})\bigr)dt
\\
&+ \sum_{i=k_{n}+1}^{n-1}\int_{0}^{T}\sign(R_{i}-k)R_{i}v(R_{i})\phi(x_{i})\bigl(\varphi(x_{i+1})-\varphi(x_{i})\bigr)dt
\\
&+\int_{0}^{T}\sign(R_{k_{n}}-k)R_{k_{n}}v(R_{k_{n}})\biggl(\int_{x_{k_{n}}}^{x_{k_{n}+1}}\phi\varphi_{x}dx\biggr)dt
\\
&+ \sum_{i=0}^{k_{n}-1}\int_{0}^{T}\sign(R_{i}-k)R_{i}v(R_{i})\biggl(\int_{x_{i}}^{x_{i+1}}\phi'(\tilde{y}_{i,i+1})(x-x_{i+1})\varphi_{x}dx\biggr)dt
\\
&+ \sum_{i=k_{n}+1}^{n-1}\int_{0}^{T}\sign(R_{i}-k)R_{i}v(R_{i})\biggl(\int_{x_{i}}^{x_{i+1}}\phi'(\tilde{x}_{i,i+1})(x-x_{i})\varphi_{x}dx\biggr)dt.
\end{split}
\end{equation}
\item[(P4)] In case (P4), we combine again cases (P1) and (P2), namely we use the Taylor's expansions \eqref{28} and \eqref{29} of $\phi$ respectively for $i\in\{0,\dots,k_{n}-1\}$ and  $i\in\{k_{n}+1,\dots,n-1\}$. Then it follows that
\begin{equation}
\label{145}
\begin{split}
A_{2}=& \sum_{i=0}^{k_{n}-1}\int_{0}^{T}\sign(R_{i}-k)R_{i}v(R_{i})\phi(x_{i})\bigl(\varphi(x_{i+1})-\varphi(x_{i})\bigr)dt
\\
&+ \sum_{i=k_{n}+1}^{n-1}\int_{0}^{T}\sign(R_{i}-k)R_{i}v(R_{i})\phi(x_{i+1})\bigl(\varphi(x_{i+1})-\varphi(x_{i})\bigr)dt
\\
&+\int_{0}^{T}\sign(R_{k_{n}}-k)R_{k_{n}}v(R_{k_{n}})\biggl(\int_{x_{k_{n}}}^{x_{k_{n}+1}}\phi\varphi_{x}dx\biggr)dt
\\
&+ \sum_{i=0}^{k_{n}-1}\int_{0}^{T}\sign(R_{i}-k)R_{i}v(R_{i})\biggl(\int_{x_{i}}^{x_{i+1}}\phi'(\tilde{x}_{i,i+1})(x-x_{i})\varphi_{x}dx\biggr)dt
\\
&+ \sum_{i=k_{n}+1}^{n-1}\int_{0}^{T}\sign(R_{i}-k)R_{i}v(R_{i})\biggl(\int_{x_{i}}^{x_{i+1}}\phi'(\tilde{y}_{i,i+1})(x-x_{i+1})\varphi_{x}dx\biggr)dt.
\end{split}
\end{equation}
\end{itemize}

From now on, let us consider the four cases separately. Putting together \eqref{140}-\eqref{141} and \eqref{142}, we get that in case (P1) it holds
\begin{equation*}
\begin{split}
\int_{0}^T&\int_{\mathbb{R}}\bigl[|\rho^{n}-k|\varphi_{t}+\sign(\rho^{n}-k)\bigl(f(\rho)-f(k)\bigr)\phi\varphi_{x}-\sign(\rho^{n}-k)f(k)\phi'\varphi\bigr]dxdt
\\
=&k\int_{0}^{T}\bigl(v(k)-v(R_{0})\bigr)\phi(x_{0})\varphi(x_{0})dt+ k\int_{0}^{T}\bigl(v_{\max}-v(k)\bigr)\phi(x_{n})\varphi(x_{n})dt
\\
& +\sum_{i=0}^{n-1}\int_{0}^{T}\sign(R_{i}-k)\biggl[-\dot{R}_{i}\biggl(\int_{x_{i}}^{x_{i+1}}\varphi dx\biggr)-k\bigl[\dot{x}_{i}-v(k)\phi(x_{i})\bigr]\varphi(x_{i})
\\
&\hspace{3.9cm}-\bigl[R_{i}(\dot{x}_{i+1}-\dot{x}_{i})-k\bigl(\dot{x}_{i+1}-v(k)\phi(x_{i+1})\bigr)\bigr]\varphi(x_{i+1})\biggr]dt
\\
&+ \sum_{i=0}^{n-1}\int_{0}^{T}\sign(R_{i}-k)R_{i}v(R_{i})\biggl(\int_{x_{i}}^{x_{i+1}}\phi'(\tilde{x}_{i,i+1})(x-x_{i})\varphi_{x}dx\biggr)dt,
\end{split}
\end{equation*}
where
\begin{equation*}
\begin{split}
-R_{i}\bigl(\dot{x}_{i+1}-\dot{x}_{i}\bigr)\varphi(x_{i+1})&=-R_{i}\frac{\dot{x}_{i+1}-\dot{x}_{i}}{x_{i+1}-x_{i}}\biggl(\int_{x_{i}}^{x_{i+1}}\varphi(x_{i+1})dx\biggr)=\dot{R}_{i}\biggl(\int_{x_{i}}^{x_{i+1}}\varphi(x_{i+1})dx\biggr).
\end{split}
\end{equation*}
Moreover, using the definition of $\dot{R}_{i}(t)$, the previous identity becomes
\begin{equation*}
\begin{split}
\int_{0}^T\int_{\mathbb{R}}\bigl[|\rho^{n}-k|\varphi_{t}+\sign(\rho^{n}-k)\bigl(f(\rho)-f(k)\bigr)\phi\varphi_{x}-\sign(\rho^{n}-k)f(k)\phi'\varphi\bigr]dxdt =&kD+E_{1}+E_{2},
\end{split}
\end{equation*}
where
\begin{equation*}
\begin{split}
D:=& \int_{0}^{T}\bigl(v(k)-v(R_{0})\bigr)\phi(x_{0})\varphi(x_{0})dt+ \int_{0}^{T}\bigl(v_{\max}-v(k)\bigr)\phi(x_{n})\varphi(x_{n})dt
\\
& +\sum_{i=0}^{n-1}\int_{0}^{T}\sign(R_{i}-k)\biggl[\bigl[\dot{x}_{i+1}-v(k)\phi(x_{i+1})\bigr]\varphi(x_{i+1})-\bigl[\dot{x}_{i}-v(k)\phi(x_{i})\bigr]\varphi(x_{i})\biggr]dt,
\end{split}
\end{equation*}
\begin{equation*}
E_{1}:= \sum_{i=0}^{n-1}\int_{0}^{T}\sign(R_{i}-k)R_{i}v(R_{i})\biggl(\int_{x_{i}}^{x_{i+1}}\phi'(\tilde{x}_{i,i+1})(x-x_{i})\varphi_{x}dx\biggr)dt
\end{equation*}
and
\begin{equation*}
\begin{split}
E_{2}:=& \sum_{i=0}^{n-1}\int_{0}^{T}\sign(R_{i}-k)\frac{R_{i}^{2}}{\ell_{n}}(\dot{x}_{i+1}-\dot{x}_{i})\biggl(\int_{x_{i}}^{x_{i+1}}\bigl(\varphi(x)-\varphi(x_{i+1})\bigr) dx\biggr)dt.
\end{split}
\end{equation*}
Regarding $E_{1}$, since $\varphi(x,\cdot)$ is a Lipschitz function for every $x\in\mathbb{R}$ and due to Lemma \ref{lem7}, we have that
\begin{equation}
\label{27}
\begin{split}
E_{1}\geq -\ell_{n}L'\sum_{i=0}^{n-1}\int_{0}^{T}\bigl|\varphi(x_{i+1})-\varphi(x_{i})\bigr|dt&\geq -\ell_{n}L'\Lip_{\varphi}\int_{0}^{T}(x_{n}-x_{0})dt\geq - \ell_{n}L'\Lip_{\varphi}T\biggl[\overline{x}_{\max}-\overline{x}_{\min}+2LT\biggr],
\end{split}
\end{equation}
while, using again the Lipschitz condition of $\varphi$ and due to Lemma \ref{lem7} and \eqref{11}, we get that $E_{2}$ satisfies
\begin{equation}
\label{227}
\begin{split}
E_{2}\geq& -\sum_{i=0}^{n-2}\int_{0}^{T}\frac{{R}_{i}^{2}}{\ell_{n}}\bigl|v(R_{i})\phi({x_{i}})-v(R_{i+1})\phi({x_{i+1}})\bigr|\biggl(\int_{x_{i}}^{x_{i+1}}\bigl|\varphi(x)-\varphi(x_{i+1})\bigr|dx\biggr)dt
\\
&-\int_{0}^{T}\frac{{R}_{n-1}^{2}}{\ell_{n}}\bigl|v_{\max}\phi(x_{n})-v(R_{n-1})\phi({x_{n-1}})\bigr|\biggl(\int_{x_{n-1}}^{x_{n}}\bigl|\varphi(x)-\varphi(x_{n})\bigr|dx\biggr)dt
\\
\geq& -\ell_{n}\Lip_{\varphi}\biggl[\sum_{i=0}^{n-2}\int_{0}^{T}\bigl|v(R_{i})\phi({x_{i}})-v(R_{i+1})\phi({x_{i+1}})\bigr|dt+\int_{0}^{T}\bigl|v_{\max}\phi({x_{n}})-v(R_{n-1})\phi({x_{n-1}})\bigr|dt\biggr]
\\
\geq & -\ell_{n}\Lip_{\varphi}\biggl[\sum_{i=0}^{n-2}\int_{0}^{T}v(R_{i})\bigl|\phi({x_{i}})-\phi({x_{i+1}})\bigr|dt+\sum_{i=0}^{n-2}\int_{0}^{T}\phi({x_{i+1}})\bigl|v(R_{i})-v(R_{i+1})\bigr|dt+LT\biggr]
\\
\geq & -\ell_{n}\Lip_{\varphi}\biggl[L'\int_{0}^{T}(x_{n}-{x_{0}})dt+\Lip_{v}||\phi||_{L^{\infty}(\mathbb{R})}\int_{0}^{T}\sum_{i=0}^{n-2}|R_{i}-R_{i+1}|dt+LT\biggr]
\\
\geq & -\ell_{n}\Lip_{\varphi}\biggl[L'T\bigl[\overline{x}_{\max}-\overline{x}_{\min}+2LT\bigr]+\Lip_{v}||\phi||_{L^{\infty}(\mathbb{R})}\bigl(\TV[\overline{\rho}]+\alpha t+\beta e^{L'T}\bigr) e^{[\gamma T(1+T)+\zeta e^{L'T}]}+LT\biggr].
\end{split}
\end{equation}
Putting together the previous two estimates, we hence get that $E_{1}+E_{2}\geq -c\, \ell_{n}$ for some positive constant $c$ independent on $n$ and so, since the right-hand side of this inequality tends to zero as $n\rightarrow +\infty$, to conclude the proof it is sufficient to show that $D$ is non-negative. From a direct calculation we remark that
\begin{equation}
\label{45}
\begin{split}
\sum_{i=0}^{n-1}&\sign(R_{i}-k)\biggl[\bigl[\dot{x}_{i+1}-v(k)\phi(x_{i+1})\bigr]\varphi(x_{i+1})-\bigl[\dot{x}_{i}-v(k)\phi(x_{i})\bigr]\varphi(x_{i})\biggr]
\\
=&\sum_{i=1}^{n-1}\bigl[\sign(R_{i-1}-k)-\sign(R_{i}-k)\bigr]\bigl(v(R_{i})-v(k)\bigr)\phi(x_{i})\varphi(x_{i})
\\
&-\sign(R_{0}-k)\bigl(v(R_{0})-v(k)\bigr)\phi(x_{0})\varphi(x_{0})+\sign(R_{n-1}-k)\bigl(v_{\max}-v(k)\bigr)\phi(x_{n})\varphi(x_{n}),
\end{split}
\end{equation}
therefore $D$ can be rewritten as
\begin{equation*}
\begin{split}
D= \int_{0}^{T}D_{0}(t)dt+\sum_{i=1}^{n-1}\int_{0}^{T}D_{i}(t)dt+\int_{0}^{T}D_{n}(t)dt,
\end{split}
\end{equation*}
where
\begin{equation*}
D_{0}:= \bigl(1+\sign(R_{0}-k)\bigr)\bigl(v(k)-v(R_{0})\bigr)\phi(x_{0})\varphi(x_{0}), \quad D_{n}:= \bigr(1+\sign(R_{n-1}-k)\bigr)\bigl(v_{\max}-v(k)\bigr)\phi(x_{n})\varphi(x_{n})
\end{equation*}
and
\begin{equation*}
D_{i}:=\bigl[\sign(R_{i-1}-k)-\sign(R_{i}-k)\bigr]\bigl(v(R_{i})-v(k)\bigr)\phi(x_{i})\varphi(x_{i}) \quad \text{for} \ i\in \{1,\dots,n-1\}.
\end{equation*}
Concerning $D_{0}$ and $D_{n}$, we have two different sub-cases, namely
\begin{equation*}
D_{0}\begin{cases} =0\quad &\text{if}\ R_{0}\leq k,
\\ \geq 0 &\text{if} \ R_{0}<k,
\end{cases} \quad \text{and} \quad
D_{n}\begin{cases} =0\quad &\text{if}\ R_{n-1}< k,
\\ \geq 0 \quad &\text{if} \ R_{n-1}\geq k.
\end{cases}
\end{equation*}
Turning to $D_{i}$ with $i\in\{1,\dots,n-1\}$, since $v$ is non-increasing and $\phi$, $\varphi$ are non-negative, after a simple calculation we get that
\begin{equation*}
D_{i}\begin{cases} =0\quad &\text{if}\ R_{i-1}>k\ \text{and}\ R_{i}>k \ \text{or if}\ R_{i-1}<k\ \text{and}\ R_{i}<k \ \text{or if} \ R_{i}=k,
\\ \geq 0 \quad &\text{otherwise}
\end{cases}
\end{equation*}
and this concludes the proof of \eqref{48} in case (P1).

For case (P2), using instead the first order Taylor's expansion \eqref{29}, we can proceed in a symmetric way to get the validity of \eqref{48} also in this case (the details are left to the reader).

Let us now consider the case (P3). Putting together \eqref{140}-\eqref{141} and \eqref{144}, we get that
\begin{equation*}
\begin{split}
\int_{0}^T&\int_{\mathbb{R}}\bigl[|\rho^{n}-k|\varphi_{t}+\sign(\rho^{n}-k)\bigl(f(\rho)-f(k)\bigr)\phi\varphi_{x}-\sign(\rho^{n}-k)f(k)\phi'\varphi\bigr]dxdt= kD+E_{1}^{3}+E_{2}^{3}+F,
\end{split}
\end{equation*}
where
\begin{equation*}
\begin{split}
D:=&\int_{0}^{T}\bigl(v(k)-v_{\max}\bigr)\phi(x_{0})\varphi(x_{0})dt+ \int_{0}^{T}\bigl(v_{\max}-v(k)\bigr)\phi(x_{n})\varphi(x_{n})dt
\\
&+\sum_{i=0}^{n-1}\int_{0}^{T}\sign(R_{i}-k)\biggl[\bigl[\dot{x}_{i+1}-v(k)\phi(x_{i+1})\bigr ]\varphi(x_{i+1})-\bigl[\dot{x}_{i}-v(k)\phi(x_{i})\bigr]\varphi(x_{i})\biggr]dt,
\end{split}
\end{equation*}
\begin{equation*}
\begin{split}
E_{1}^{3}:=& \sum_{i=0}^{k_{n}-1}\int_{0}^{T}\sign(R_{i}-k)R_{i}v(R_{i})\biggl(\int_{x_{i}}^{x_{i+1}}\phi'(\tilde{y}_{i,i+1})(x-x_{i+1})\varphi_{x}dx\biggr)dt
\\
&+ \sum_{i=k_{n}+1}^{n-1}\int_{0}^{T}\sign(R_{i}-k)R_{i}v(R_{i})\biggl(\int_{x_{i}}^{x_{i+1}}\phi'(\tilde{x}_{i,i+1})(x-x_{i})\varphi_{x}dx\biggr)dt,
\end{split}
\end{equation*}
\begin{equation*}
\begin{split}
E_{2}^{3}:= &-\!\sum_{i=0}^{k_{n}-1}\!\int_{0}^{T}\!\!\sign(R_{i}-k)\dot{R}_{i}\biggl(\int_{x_{i}}^{x_{i+1}}\!\!\!\!\bigl(\varphi-\varphi(x_{i})\bigr) dx\biggr)dt-\!\!\!\sum_{i=k_{n}+1}^{n-1}\!\int_{0}^{T}\!\!\!\!\sign(R_{i}-k)\dot{R}_{i}\biggl(\int_{x_{i}}^{x_{i+1}}\!\!\!\!\!\bigl(\varphi-\varphi(x_{i+1})\bigr) dx\biggr)dt
\end{split}
\end{equation*}
and
\begin{equation*}
\begin{split}
F:= &-\int_{0}^{T}\sign(R_{k_{n}}-k)\dot{R}_{k_{n}}\biggl(\int_{x_{k_{n}}}^{x_{k_{n}+1}}\varphi dx\biggr) dt-\int_{0}^{T}\sign(R_{k_{n}}-k)R_{k_{n}}\bigl[\dot{x}_{k_{n}+1}\varphi(x_{k_{n}+1})-\dot{x}_{k_{n}}\varphi(x_{k_{n}})\bigr] dt
\\
&+\int_{0}^{T}\sign(R_{k_{n}}-k)R_{k_{n}}v(R_{k_{n}})\biggl(\int_{x_{k_{n}}}^{x_{k_{n}+1}}\phi(x)\varphi_{x}(x)dx\biggr)dt.
\end{split}
\end{equation*}
Rearranging the indices properly, we can prove that $E_{1}^{3}$ and $E_{2}^{3}$ satisfy a similar estimate to \eqref{27} and \eqref{227} respectively. Concerning $F$, we first remark that it holds
\begin{equation*}
\label{43}
\begin{split}
-R_{k_{n}}\bigl[\dot{x}_{k_{n}+1}\varphi(x_{k_{n}+1})-\dot{x}_{k_{n}}\varphi(x_{k_{n}})\bigr]=&-R_{k_{n}}\dot{x}_{k_{n}+1}\bigl(\varphi(x_{k_{n}+1})-\varphi(x_{k_{n}})\bigr)-R_{k_{n}}\varphi(x_{k_{n}})(\dot{x}_{k_{n}+1}-\dot{x}_{k_{n}})
\\
=&-R_{k_{n}}\dot{x}_{k_{n}+1}\bigl(\varphi(x_{k_{n}+1})-\varphi(x_{k_{n}})\bigr)+\dot{R}_{k_{n}}\biggl(\int_{x_{k_{n}}}^{x_{k_{n+1}}}\varphi(x_{k_{n}})dx\biggr).
\end{split}
\end{equation*}
Moreover, since the first order Taylor's expansion of $\phi$ at $0$ in the intervals $(x_{k_{n}},0)$ and $(0,x_{k_{n}+1})$ implies
\begin{equation*}
\label{37}
\begin{split}
\int_{x_{k_{n}}}^{x_{k_{n}+1}}\phi(x)\varphi_{x}(x)dx=&\phi(0)\bigl(\varphi(x_{k_{n}+1})-\varphi(x_{k_{n}})\bigr)+\int_{x_{k_{n}}}^{0}\phi'(\tilde{y}_{k_{n}})x\varphi_x(x)dx+\int_{0}^{x_{k_{n}+1}}\phi'(\tilde{x}_{k_{n}+1})x\varphi_x(x)dx
\end{split}
\end{equation*}
for some $\tilde{y}_{k_{n}}\in (x_{k_{n}},0)$ and $\tilde{x}_{k_{n}+1}\in (0,x_{k_{n}+1})$, then, using the Lipschitz condition on $\varphi$ and \eqref{130}, we get
\begin{equation*}
\begin{split}
F\geq &-\int_{0}^{T}|\dot{R}_{k_{n}}|\biggl(\int_{x_{k_{n}}}^{x_{k_{n}+1}}\bigl|\varphi-\varphi(x_{k_{n}})\bigr| dx\biggr)dt-\int_{0}^{T}R_{k_{n}}\dot{x}_{k_{n}+1}\bigl|\varphi(x_{k_{n}+1})-\varphi(x_{k_{n}})\bigr|dt
\\
&-\int_{0}^{T}R_{k_{n}}v(R_{k_{n}})|\phi(0)|\bigl|\varphi(x_{k_{n}+1})-\varphi(x_{k_{n}})\bigr|dt-\int_{0}^{T}R_{k_{n}}v(R_{k_{n}})\biggl|\int_{x_{k_{n}}}^{0}\phi'(\tilde{y}_{k_{n}})x\varphi_xdx\biggr|dt
\\
&-\int_{0}^{T}R_{k_{n}}v(R_{k_{n}})\biggl|\int_{0}^{x_{k_{n}+1}}\phi'(\tilde{x}_{k_{n}+1})x\varphi_xdx\biggr|dt
\\
\geq & - \ell_{n}\Lip_{\varphi}T\bigl[4L+L' \bigl(\overline{x}_{\max}-\overline{x}_{\min}+2LT\bigr)\bigr].
\end{split}
\end{equation*}
As a consequence it follows that $E_{1}^{3}+E_{2}^{3}+F\geq -c\, \ell_{n}$ for some constant $c$ independent on $n$ and hence it remains to prove as before that $D$ is non-negative. From \eqref{45} we get that
\begin{equation*}
\begin{split}
D=&\int_{0}^{T}\bigl(1+\sign(R_{0}-k)\bigr)\bigl(v(k)-v_{\max}\bigr)\phi(x_{0})\varphi(x_{0})dt
\\
&+\sum_{i=1}^{k_{n}}\int_{0}^{T}\bigl[\sign(R_{i-1}-k)-\sign(R_{i}-k)\bigr]\bigl(v(R_{i-1})-v(k)\bigr)\phi(x_{i})\varphi(x_{i})dt
\\
&+\sum_{i=k_{n}+1}^{n-1}\int_{0}^{T}\bigl[\sign(R_{i-1}-k)-\sign(R_{i}-k)\bigr]\bigl(v(R_{i})-v(k)\bigr)\phi(x_{i})\varphi(x_{i})dt
\\
&+\int_{0}^{T}\bigl(1+\sign(R_{n-1}-k)\bigr)\bigl(v_{\max}-v(k)\bigr)\phi(x_{n})\varphi(x_{n})dt
\end{split}
\end{equation*}
and this implies that $D\geq 0$, since we can estimate each term as we did in the previous cases.

Turning to the last case (P4), we combine again cases (P1) and (P2), namely we use the Taylor's expansions \eqref{28} and \eqref{29} of $\phi$ respectively for $i\in\{0,\dots,k_{n}-1\}$ and  $i\in\{k_{n}+1,\dots,n-1\}$. This implies, putting together \eqref{140}-\eqref{141} and \eqref{145}, that
\begin{equation*}
\begin{split}
\int_{0}^T&\int_{\mathbb{R}}\bigl[|\rho^{n}-k|\varphi_{t}+\sign(\rho^{n}-k)\bigl(f(\rho)-f(k)\bigr)\phi\varphi_{x}-\sign(\rho^{n}-k)f(k)\phi'\varphi\bigr]dxdt
=kD+E_{1}^{4}+E_{2}^{4}+F,
\end{split}
\end{equation*}
where
\begin{equation*}
\begin{split}
D:=&\int_{0}^{T}\bigl(v(k)-v(R_{0})\bigr)\phi(x_{0})\varphi(x_{0})dt+ \int_{0}^{T}\bigl(v(R_{n-1})-v(k)\bigr)\phi(x_{n})\varphi(x_{n})dt
\\
&+\sum_{i=0}^{n-1}\int_{0}^{T}\sign(R_{i}-k)\biggl[\bigl[\dot{x}_{i+1}-v(k)\phi(x_{i+1})\bigr]\varphi(x_{i+1})-\bigl[\dot{x}_{i}-v(k)\phi(x_{i})\bigr]\varphi(x_{i})\biggr]dt,
\end{split}
\end{equation*}
\begin{equation*}
\begin{split}
E_{1}^{4}:=& \sum_{i=0}^{k_{n}-1}\int_{0}^{T}\sign(R_{i}-k)R_{i}v(R_{i})\biggl(\int_{x_{i}}^{x_{i+1}}\phi'(\tilde{x}_{i,i+1})(x-x_{i})\varphi_{x}dx\biggr)dt
\\
&+ \sum_{i=k_{n}+1}^{n-1}\int_{0}^{T}\sign(R_{i}-k)R_{i}v(R_{i})\biggl(\int_{x_{i}}^{x_{i+1}}\phi'(\tilde{y}_{i,i+1})(x-x_{i+1})\varphi_{x}dx\biggr)dt,
\end{split}
\end{equation*}
\begin{equation*}
\begin{split}
E_{2}^{4}:= &\!-\!\sum_{i=0}^{k_{n}-1}\!\int_{0}^{T}\!\!\!\sign(R_{i}-k)\dot{R}_{i}\biggl(\int_{x_{i}}^{x_{i+1}}\!\!\!\!\bigl(\varphi-\varphi(x_{i+1})\bigr) dx\biggr)dt-\!\!\!\!\sum_{i=k_{n}+1}^{n-1}\int_{0}^{T}\!\!\!\sign(R_{i}-k)\dot{R}_{i}\biggl(\int_{x_{i}}^{x_{i+1}}\!\!\!\!\bigl(\varphi-\varphi(x_{i})\bigr) dx\biggr)dt,
\end{split}
\end{equation*}
while $F$ is the same term defined in case (P3). Arguing as in the previous cases, we can prove that $E_{1}^{4}$ and $E_{2}^{4}$ satisfy a similar estimate to \eqref{27} and \eqref{227} respectively, so it follows that $E_{1}^{4}+E_{2}^{4}+F\geq -c\, \ell_{n}$ for some constant $c$ independent on $n$ and hence to conclude the proof we need to show as before that $D$ is non-negative. Using again \eqref{45} we get that
\begin{equation*}
\begin{split}
D=\ &\int_{0}^{T}\bigl(1+\sign(R_{0}-k)\bigr)\bigl(v(k)-v(R_{0})\bigr)\phi(x_{0})\varphi(x_{0})dt
\\
&+\sum_{i=1}^{k_{n}}\int_{0}^{T}\bigl[\sign(R_{i-1}-k)-\sign(R_{i}-k)\bigr]\bigl(v(R_{i})-v(k)\bigr)\phi(x_{i})\varphi(x_{i})dt
\\
&+\sum_{i=k_{n}+1}^{n-1}\int_{0}^{T}\bigl[\sign(R_{i-1}-k)-\sign(R_{i}-k)\bigr]\bigl(v(R_{i-1})-v(k)\bigr)\phi(x_{i})\varphi(x_{i})dt
\\
&+\int_{0}^{T}\bigl(1+\sign(R_{n-1}-k)\bigr)\bigl(v(R_{n-1})-v(k)\bigr)\phi(x_{n})\varphi(x_{n})dt
\end{split}
\end{equation*}
and hence it follows that $D\geq 0$ also in this last case, since we can estimate each term as we did before.

Now it remains to prove that $\rho$ satisfies the entropy condition, that is
\begin{equation*}
\begin{split}
\int_{0}^{T}\int_{\mathbb{R}}\biggl[&|\rho(x,t)-k|\varphi_{t}(x,t)+\sign(\rho(x,t)-k)\bigl(f(\rho(x,t))-f(k)\bigr)\phi(x)\varphi_{x}(x,t)
\\
&\hspace{2.9cm}-\sign(\rho(x,t)-k)f(k)\phi'(x)\varphi(x,t)\biggr]dxdt\geq 0
\end{split}
\end{equation*}
for every $k\geq0$ and every non-negative $\varphi\in C_{c}^{\infty}(\mathbb{R}\times (0,T))$.
\\
We first notice that the previous inequality is a direct consequence of \eqref{48}: we need only to show that it is possible to interchange the limit and the integrals. The convergence of $\{\rho^{n}\}_{n\in\mathbb{N}}$ to $\rho$ almost everywhere and in $L^{1}$ on $\mathbb{R}\times [0,T]$ implies that
\begin{equation}
\label{60}
\lim_{n\rightarrow +\infty}\int_{0}^{T}\int_{\mathbb{R}}|\rho^{n}-k|\varphi_{t}dxdt= \int_{0}^{T}\int_{\mathbb{R}}|\rho-k|\varphi_{t}dxdt
\end{equation}
and moreover, since $g(\mu):= \sign (\mu-k)\bigl(f(\mu)-f(k)\bigr)$ is a continuous function, we also have that
\begin{equation}
\label{61}
\begin{split}
\lim_{n\rightarrow +\infty}&\int_{0}^{T}\int_{\mathbb{R}}\sign (\rho^{n}-k)\bigl(f(\rho^{n})-f(k)\bigr)\phi\varphi_{x}dxdt= \int_{0}^{T}\int_{\mathbb{R}}\sign (\rho-k)\bigl(f(\rho)-f(k)\bigr)\phi\varphi_{x}dxdt.
\end{split}
\end{equation}
Therefore it remains to show that
\begin{equation}
\label{46}
\lim_{n\rightarrow +\infty}\int_{0}^{T}\int_{\mathbb{R}}\sign(\rho^{n}-k)f(k)\phi'\varphi dxdt=\int_{0}^{T}\int_{\mathbb{R}}\sign(\rho-k)f(k)\phi'\varphi dxdt,
\end{equation}
where, since $g(\mu)=\sign(\mu-k)$ is a discontinuous function, we can't interchange the limit and the integrals directly. To overcome this problem, we need to introduce $\eta_{\varepsilon}^{\pm}$, two Lipschitz approximations of the $\sign$ function  such that
\begin{equation}
\label{49}
\sign(z)-\eta_{\varepsilon}^{+}(z)\geq 0 \quad \text{and} \quad \sign(z)-\eta_{\varepsilon}^{-}(z)\leq 0 \quad \text{for all}\ z\in\mathbb{R}.
\end{equation}
Let us denote $M:=||\phi'||_{L^{\infty}(\mathbb{R})}$ from now on. We first remark that \eqref{49} implies
\begin{equation*}
\begin{split}
\int_{0}^{T}\int_{\mathbb{R}}\!\sign(\rho^{n}-k)f(k)\phi'\varphi dxdt
=&\int_{0}^{T}\int_{\mathbb{R}}\sign(\rho^{n}-k)f(k)(\phi'-M)\varphi dxdt+\int_{0}^{T}\int_{\mathbb{R}}\!\sign(\rho^{n}-k) f(k)M\varphi dxdt
\\
\leq&\int_{0}^{T}\int_{\mathbb{R}}\eta_{\varepsilon}^{+}(\rho^{n}-k)f(k)(\phi'-M)\varphi dxdt+ \int_{0}^{T}\int_{\mathbb{R}}\eta_{\varepsilon}^{-}(\rho^{n}-k)f(k)M\varphi dxdt.
\end{split}
\end{equation*}
On the other hand, from the Lipschitz condition of $\eta_{\varepsilon}^{\pm}$ and using again the convergence of $\{\rho^{n}\}_{n\in\mathbb{N}}$ to $\rho$, it follows that
\begin{equation}
\label{50}
\begin{split}
\lim_{n\rightarrow +\infty}& \int_{0}^{T}\int_{\mathbb{R}}\bigl[\eta_{\varepsilon}^{+}(\rho^{n}-k)-\eta_{\varepsilon}^{+}(\rho-k)\bigr]f(k)(\phi'-M)\varphi dxdt
\\
&\leq f(k)||\varphi||_{L
^{\infty}(\mathbb{R}\times[0,T])}\lim_{n\rightarrow +\infty}\int_{0}^{T}\int_{\mathbb{R}}|\eta_{\varepsilon}^{+}(\rho^{n}-k)-\eta_{\varepsilon}^{+}(\rho-k)||\phi'-M| dxdt
\\
&\leq 2f(k)M||\varphi||_{L
^{\infty}(\mathbb{R}\times[0,T])}\Lip_{\eta_{\varepsilon}^{+}}\lim_{n\rightarrow +\infty}\int_{0}^{T}\int_{\mathbb{R}}|\rho^{n}-\rho|dxdt= 0
\end{split}
\end{equation}
and analogously that
\begin{equation}
\label{51}
\begin{split}
\lim_{n\rightarrow +\infty}& \int_{0}^{T}\int_{\mathbb{R}}\bigl[\eta_{\varepsilon}^{-}(\rho^{n}-k)-\eta_{\varepsilon}^{-}(\rho-k)\bigr]f(k)M\varphi dxdt
\\
&\leq f(k)M||\varphi||_{L
^{\infty}(\mathbb{R}\times[0,T])}\lim_{n\rightarrow +\infty}\int_{0}^{T}\int_{\mathbb{R}}|\eta_{\varepsilon}^{-}(\rho^{n}-k)-\eta_{\varepsilon}^{-}(\rho-k)|dxdt
\\
&\leq f(k) M||\varphi||_{L
^{\infty}(\mathbb{R}\times[0,T])}\Lip_{\eta_{\varepsilon}^{-}}\lim_{n\rightarrow +\infty}\int_{0}^{T}\int_{\mathbb{R}}|\rho^{n}-\rho|dxdt= 0.
\end{split}
\end{equation}
Combining the previous three estimates, we hence get
\begin{equation*}
\begin{split}
\limsup_{n\rightarrow +\infty}&\int_{0}^{T}\int_{\mathbb{R}}\sign(\rho^{n}-k)f(k)\phi'\varphi dxdt\leq  \int_{0}^{T}\int_{\mathbb{R}}\bigl[\eta_{\varepsilon}^{+}(\rho-k)(\phi'-M)+\eta_{\varepsilon}^{-}(\rho-k)M\bigr]f(k)\varphi dxdt
\end{split}
\end{equation*}
and, since it holds
\begin{equation*}
\bigl[\eta_{\varepsilon}^{+}(\rho-k)(\phi'-M)+\eta_{\varepsilon}^{-}(\rho-k)M\bigr]f(k)\varphi
\leq 3f(k)M\varphi\in L^{1}(\mathbb{R}\times [0,T]),
\end{equation*}
then we can apply the dominated convergence theorem and pass to the limit in $\varepsilon$, which implies
\begin{equation}
\label{52}
\limsup_{n\rightarrow +\infty}\int_{0}^{T}\int_{\mathbb{R}}\sign(\rho^{n}-k)f(k)\phi'\varphi dxdt\leq \int_{0}^{T}\int_{\mathbb{R}}\sign(\rho-k)f(k)\phi'\varphi dxdt.
\end{equation}
Proceeding in a symmetric way we notice that
\begin{equation*}
\begin{split}
\int_{0}^{T}\int_{\mathbb{R}}\sign(\rho^{n}-k)f(k)\phi'\varphi dxdt \geq& \int_{0}^{T}\int_{\mathbb{R}}\eta_{\varepsilon}^{-}(\rho^{n}-k)f(k)(\phi'-M)\varphi dxdt
\\
&+ \int_{0}^{T}\int_{\mathbb{R}}\eta_{\varepsilon}^{+}(\rho^{n}-k)f(k)M\varphi dxdt
\end{split}
\end{equation*}
and, using again \eqref{50}, \eqref{51} and since
\begin{equation*}
\bigl[\eta_{\varepsilon}^{-}(\rho-k)(\phi'-M)+\eta_{\varepsilon}^{+}(\rho-k)M\bigr]f(k)\varphi
\leq 3f(k)M\varphi\in L^{1}(\mathbb{R}\times [0,T]),
\end{equation*}
therefore we can apply as before the dominated convergence theorem in $\varepsilon$ and we get
\begin{equation*}
\liminf_{n\rightarrow +\infty}\int_{0}^{T}\int_{\mathbb{R}}\sign(\rho^{n}-k)f(k)\phi'\varphi dxdt\geq  \int_{0}^{T}\int_{\mathbb{R}}\sign(\rho-k)f(k)\phi'\varphi dxdt.
\end{equation*}
Combining the last inequality with \eqref{52}, we hence get \eqref{46} and this implies, together with \eqref{48}, \eqref{60} and \eqref{61}, that $\rho$ is a weak solution to \eqref{sis2} satisfying the entropy condition. 

Regarding the strong $L^1$ continuity w.r.t $t$ near $t=0$, we observe that, for an arbitrary $\sigma>0$,
\[\|\rho(\cdot,t)-\overline{\rho}\|_{L^1}\leq \|\rho^n(\cdot,t)-\rho^n(\cdot,0)\|_{L^1} + 2\sigma\]
for some $n$ large enough. This is due to the uniform $BV$ estimates on $\rho^n(t)$ and $\rho^n(0)$ which imply strong $L^1$ compactness for both $\rho^n(t)$ and $\rho^n(0)$. Now, the following interpolation inequality can be recovered as a special case of \cite[Theorem 6.4.1]{hormander_book}, by approximation of $BV$ functions via smooth functions in $L^1$ (see e.g. \cite[Proposition 3.7]{AFP}) and by the representation of the $1$-Wasserstein distance in terms of primitive variables, using the fact that the support of $\rho^n$ is uniformly bounded w.r.t. $n$ on finite time intervals, we omit the details:
\[\|\rho^n(\cdot,t)-\rho^n(\cdot,0)\|_{L^1}\leq C \left(TV[\rho^n(\cdot,t)] + TV[\rho^n(\cdot,0)]\right)^{1/2} W_1(\rho^n(\cdot,t),\rho^n(\cdot,0))^{1/2}.\]
Hence, the result in Proposition \ref{prop3} implies the existence of a constant $\bar{C}>0$ independent of $n$ such that
\[\|\rho(\cdot,t)-\overline{\rho}\|_{L^1}\leq C t^{1/2} + \sigma,\]
and the arbitrariness of $\sigma$ shows that $\rho(\cdot,t)\rightarrow \overline{\rho}$ strongly in $L^1$ as $t\searrow 0$.

Finally, to conclude that $\rho$ is the unique entropy solution we notice that $\overline{\rho}\in L^{1}(\mathbb{R})\cap L^{\infty}(\mathbb{R})\cap BV(\mathbb{R})$ due to assumption (I), $f:= \rho v(\rho)\in Lip_{loc}(\mathbb{R}_{+})$ since $v\in C^{1}(\mathbb{R}_{+})$ by assumption (V), $\psi:=\phi$ is  in $W_{loc}^{1,1}(\mathbb{R})\cap C(\mathbb{R})$ and satisfies $\psi, \psi' \in L^{\infty}(\mathbb{R})$ due to assumption (P). As a consequence, we can apply theorem \ref{thmKR} and this concludes the proof of our main result.
\end{proof}

\section*{Acknowledgements}

The authors acknowledge financial support from the EU-funded Erasmus Mundus programme ‘MathMods - Mathematical models in engineering: theory, methods, and applications’ at the University of L’Aquila, from the local fund of the University of L'Aquila
‘DP-LAND’ (Deterministic Particles for Local And Nonlocal Dynamics) and from the Italian FFABR (Fondo di Finanziamento per le Attivit\`{a} di Base della Ricerca, Italian MIUR) funding scheme. We thank Emanuela Radici for useful suggestions on the readability of the manuscript and for proofreading it.

\bibliography{difsti_new1}

\begin{thebibliography}{10}

\bibitem{AFP}
L~Ambrosio, N.~Fusco, and D.~Pallara.
\newblock {\em Functions of bounded variation and free discontinuity problems}.
\newblock Mathematical Monographs. Oxford University Press, 2000.

\bibitem{Aw}
A.~Aw, A.~Klar, T.~Materne, and M.~Rascle.
\newblock Derivation of continuum traffic flow models from microscopic
  follow-the-leader models.
\newblock {\em SIAM Journal on Applied Mathematics}, 63(1):\ 259--278, 2002.

\bibitem{ARZ}
A.~Aw and M.~Rascle.
\newblock Resurrection of 'second order' models of traffic flow.
\newblock {\em SIAM Journal on Applied Mathematics}, 60(3):\ 916--938, 2000.

\bibitem{degond}
F.~Berthelin, P.~Degond, M.~Delitala, and M.~Rascle.
\newblock A model for the formation and evolution of traffic jams.
\newblock {\em Archive for Rational Mechanics and Analysis}, 187(2):\ 185--220,
  2008.

\bibitem{bergoa}
F.~Berthelin and P.~Goatin.
\newblock Particle approximation of a constrained model for traffic flow.
\newblock {\em Nonlinear Differential Equations and Applications NoDEA},
  24(5)(55), 2017.

\bibitem{buerger_et_al}
F.~Betancourt, R.~B\"{u}rger, K.~H. Karlsen, and E.~M. Tory.
\newblock On nonlocal conservation laws modelling sedimentation.
\newblock {\em Nonlinearity}, 24(3):\ 855--885, 2011.

\bibitem{choi}
J.~A. Carrillo, Y.-P. Choi, and M.~Hauray.
\newblock The derivation of swarming models: mean-field limit and {W}asserstein
  distances.
\newblock {\em Collective Dynamics from Bacteria to Crowds. CISM International
  Centre for Mechanical Sciences}, 553:\ 1--46, 2014.

\bibitem{CDFT_ANS}
J.~A. Carrillo, M.~Di~Francesco, and G.~Toscani.
\newblock Condensation phenomena in nonlinear drift equations.
\newblock {\em Annali della Scuola Normale Superiore di Pisa, Classe di
  Scienze}, 5(15):\ 145--171, 2016.

\bibitem{dafermos_book}
C.~M. Dafermos.
\newblock {\em Hyperbolic conservation laws in continuum physics}, volume 325
  of {\em Grundlehrender Mathematischen Wissenschaften [Fundamental Principles
  of Mathematical Sciences]}.
\newblock Springer-Verlag, Berlin, 2010.

\bibitem{DFFRad}
M.~Di~Francesco, S.~Fagioli, and E.~Radici.
\newblock Deterministic particle approximation for nonlocal transport equations
  with nonlinear mobility.
\newblock {\em Journal of Differential Equations}, 266(5):\ 2830--2868, 2019.

\bibitem{DFFR_ARZ}
M.~Di~Francesco, S.~Fagioli, and M.~D. Rosini.
\newblock Many particle approximation for the aw-rascle-zhang second order
  model for vehicular traffic.
\newblock {\em Mathematical Biosciences and Engineering}, 14:127--141, 2016.

\bibitem{DFRF}
M.~Di~Francesco, S.~Fagioli, and M.~D. Rosini.
\newblock Deterministic particle approximation of scalar conservation laws.
\newblock {\em Bollettino dell'Unione Matematica Italiana}, 10(3):\ 487--501,
  2017.

\bibitem{DFFRR1}
M.~Di~Francesco, S.~Fagioli, M.~D. Rosini, and G.~Russo.
\newblock Deterministic particle approximation of the {H}ughes model in one
  space dimension.
\newblock {\em Kinetic and Related Models}, 10(1):\ 215--237, 2017.

\bibitem{DFFRR2}
M.~Di~Francesco, S.~Fagioli, M.~D. Rosini, and G.~Russo.
\newblock Follow-the-leader approximations of macroscopic models for vehicular
  and pedestrian flows.
\newblock {\em Active Particles}, 1:\ 333--378, 2017.

\bibitem{DFR}
M.~Di~Francesco and M.~D. Rosini.
\newblock Rigorous derivation of nonlinear scalar conservation laws from
  follow-the-leader type models via many particle limit.
\newblock {\em Archive for Rational Mechanics and Analysis}, 217(3):\ 831--871,
  2015.

\bibitem{FRad}
S.~Fagioli and E.~Radici.
\newblock Solutions to aggregation–diffusion equations with nonlinear
  mobility constructed via a deterministic particle approximation.
\newblock {\em Mathematical Models and Methods in Applied Sciences}, 28(9):\
  1801--1829, 2018.

\bibitem{ferrari1}
P.~A. Ferrari.
\newblock Shock fluctuations in asymmetric simple exclusion.
\newblock {\em Probabilty Theory and Related Fields}, 91(1):\ 81--101, 1992.

\bibitem{ferrari2}
P.~L. Ferrari and P.~Nejjar.
\newblock Shock fluctuations in flat tasep under critical scaling.
\newblock {\em Journal of Statistical Physics}, 160(4):\ 985--1004, 2015.

\bibitem{gosse}
L.~Gosse and G.~Toscani.
\newblock Identification of asymptotic decay to self-similarity for
  one-dimensional filtration equations.
\newblock {\em SIAM Journal on Numerical Analysis}, 43(6):\ 2590--2606, 2006.

\bibitem{vara1}
M.~Z. Guo, G.~C. Papanicolaou, and S.~R.~S. Varadhan.
\newblock Nonlinear diffusion limit for a system with nearest neighbor
  interactions.
\newblock {\em Communications in Mathematical Physics}, 118(1):\ 31--59, 1988.

\bibitem{HolRisebro2}
H.~Holden and N.~H. Risebro.
\newblock The continuum limit of follow-the-leader models. a short proof.
\newblock {\em Discrete \& Continuous Dynamical Systems-A}, 38(2):\ 715--722,
  2018.

\bibitem{HolRisebro}
H.~Holden and N.~H. Risebro.
\newblock Follow-the-leader models can be viewed as a numerical approximation
  to the lighthill-whitham-richards model for traffic flow.
\newblock {\em Networks \& Heterogeneous Media}, 13(3):\ 409--421, 2018.

\bibitem{hormander_book}
L.~H\"{o}rmander.
\newblock {\em Lectures on Nonlinear Hyperbolic Differential Equations},
  volume~26 of {\em Mathématiques et Applications}.
\newblock Springer, 1997.

\bibitem{glaciers}
K.~H. Karlsen and K.-A. Lie.
\newblock An unconditionally stable splitting scheme for a class of nonlinear
  parabolic equations.
\newblock {\em IMA Journal of Numerical Analysis}, 19(4):\ 609--635.

\bibitem{KarlsenRisebro}
K.~H. Karlsen and N.~H. Risebro.
\newblock On the uniqueness and stability of entropy solutions of nonlinear
  degenerate parabolic equations with rough coefficients.
\newblock {\em Discrete and Continuous Dynamical Systems}, 9(5):\ 1081--1104,
  2003.

\bibitem{kruzkov}
S.N. Kru{\v{z}}kov.
\newblock First order quasilinear equations in several independent variables.
\newblock {\em Mathematics of the USSR-Sbornik}, 10(2):\ 217--243, 1970.

\bibitem{liggett}
T.~M. Liggett.
\newblock {\em Interacting particle systems}, volume 276 of {\em Grundlehren
  der Mathematischen Wissenschaften [Fundamental Principles of Mathematical
  Sciences]}.
\newblock Springer-Verlag, New York, 1985.

\bibitem{LW}
M.J. Lighthill and G.B. Whitham.
\newblock On kinematic waves. {I}{I}. {A} theory of traffic flow on long
  crowded roads.
\newblock {\em Proceedings of the Royal Society. London. Series A.
  Mathematical, Physical and Engineering Sciences}, 229(1178):\ 317--345, 1955.

\bibitem{matthes}
D.~Matthes and H.~Osberger.
\newblock Convergence of a variational {L}agrangian scheme for a nonlinear
  drift diffusion equation.
\newblock {\em ESAIM: Mathematical Modelling and Numerical Analysis}, 48(3):\
  697--726, 2014.

\bibitem{Richards}
P.I. Richards.
\newblock Shock waves on the highway.
\newblock {\em Operations Research}, 4(1):\ 42--51, 1956.

\bibitem{rosini}
M.D. Rosini.
\newblock {\em {Macroscopic models for vehicular flows and crowd dynamics:
  theory and applications}}.
\newblock Understanding Complex Systems. Springer, Heidelberg, 2013.

\bibitem{RosSav}
R.~Rossi and G.~Savaré.
\newblock Tightness, integral equicontinuity and compactness for evolution
  problems in banach spaces.
\newblock {\em Annali della Scuola Normale Superiore di Pisa. Classe di
  Scienze}, 5(2):\ 395--431, 2003.

\bibitem{russo}
G.~Russo.
\newblock Deterministic diffusion of particles.
\newblock {\em Communications on Pure and Applied Mathematics}, 43(6):\
  697--733, 1990.

\bibitem{vara2}
D.~W. Stroock and S.~R.~S. Varadhan.
\newblock {\em Multidimensional diffusion processes}, volume 233 of {\em
  Grundlehren der Mathematischen Wissenschaften [Fundamental Principles of
  Mathematical Sciences]}.
\newblock Springer-Verlag, Berlin-New York, 1979.

\bibitem{toscani_kaniadakis}
G.~Toscani.
\newblock Finite time blow up in {K}aniadakis-{Q}uarati model of
  {B}ose-{E}instein particles.
\newblock {\em Communications in Partial Differential Equations}, 37(1):\
  77--87, 2012.

\bibitem{villani}
C.~Villani.
\newblock {\em Optimal transport. Old and new}, volume 338 of {\em Grundlehren
  der Mathematischen Wissenschaften [Fundamental Principles of Mathematical
  Sciences]}.
\newblock Springer-Verlag, Berlin, 2009.

\end{thebibliography}
\bibliographystyle{plain}

\end{document}